\newcommand{\Id}{\mathrm{Id}}
\newcommand{\Vol}{\mathrm{Vol}}
\numberwithin{equation}{section}
\theoremstyle{thmstyleone}%
\newtheorem{theorem}{Theorem}[section]
\theoremstyle{thmstyletwo}%
\newtheorem{remark}{Remark}[section]%
\newtheorem{lemma}[theorem]{Lemma}
\theoremstyle{thmstylethree}%
\newtheorem{definition}{Definition}[section]%
\begin{document}

\title[Spectral Distribution of Twisted Laplacian]{Spectral Distribution of Twisted Laplacian on Typical Hyperbolic Surfaces of High Genus}


\author*{\fnm{Yulin} \sur{Gong} ~\orcidlink{0009-0009-9793-1883}}\email{gongyl22@mails.tsinghua.edu.cn}

\affil{\orgdiv{Department of Mathematical Sciences}, \orgname{Tsinghua University}, \city{Beijing}, \postcode{100084}, \country{China}}


\abstract{We investigate the spectral distribution of the twisted Laplacian associated with uniform square-integrable bounded harmonic 1-form on typical hyperbolic surfaces of high genus. First, we estimate the spectral distribution by the supremum norm of the corresponding harmonic form. Subsequently, we show that the square-integrable bounded harmonic form exhibits a small supremum norm for typical hyperbolic surfaces of high genus. Based on these findings, we prove a uniform Weyl law for the distribution of real parts of the spectrum on typical hyperbolic surfaces.}

\keywords{Spectral distribution, Twisted Laplacian, Harmonic form, Typical hyperbolic surfaces}



\maketitle

\section{Introduction}
In 1956, Selberg \cite{SA} proved the prime geodesic theorem (PGT) for the number of closed geodesics on a closed hyperbolic surface. The main tool in this classical paper is the Selberg trace formula (see \cite{BU, SA}), which connects the spectrum of the Laplacian and the period of closed geodesics. Later, Katsuda-Sunada \cite{KS}, Phillips-Sarnak \cite{PS}, Lalley \cite{LS1,LS2}, Babillot-Ledrappier \cite{BL}, Anantharaman \cite{AN1} studied the number of closed geodesics in a homology class on a hyperbolic surface, using the following twisted Laplacian associated with a harmonic 1-form:
\begin{definition}
  For any harmonic $1$-form $\omega$ on a closed hyperbolic surface $X$, the twisted Laplacian $\Delta_{\omega}$ is defined as follows: for any $f \in C^{\infty}(X)$ and any $x \in X$,
\begin{equation}\label{twistedlaplacian}
    \Delta_{\omega}f(x):=\Delta f(x)-2\langle \omega,df \rangle_{x}+|\omega|_{x}^{2}f(x).
\end{equation}
\end{definition}
Here $\Delta$ is negative Laplacian on $X$, and $|\omega|_{x}^{2}=\langle \omega,\omega \rangle_{x}$\footnote{Here $\langle-,-\rangle$ is $\mathbb{C}$-bilinear, i.e. $\langle \lambda \omega ,\mu \eta\rangle_{x}=\lambda \mu \langle \omega,\eta \rangle_{x}$ for any $\lambda, \mu \in \mathbb{C}$, and $\omega,\eta \in T^{*}_{x}X$.}. The twisted Laplacian has another definition: fix a point $o$ on the universal cover $\mathbb{H}$ of $X$, lift $\omega$ to $\mathbb{H}$, then for any $f \in C^{\infty}(\mathbb{H})$,
\begin{equation*}\label{univer}
  \Delta_{\omega}f=e^{\int_{o}^{x}\omega}\Delta(e^{-\int_{o}^{x}\omega}f).
\end{equation*}
The two definitions coincide when $f$ is $\Gamma$-invariant, where $\Gamma$ is the fundamental group of $X$. In this paper, we assume that $\omega \in \mathcal{H}^{1}(X,\mathbb{R})$, the space of real-valued harmonic $1$-forms, then $\Delta_{\omega}$ is a non-self-adjoint perturbation of Laplacian and has discrete spectrum on $L^{2}(X)$: 
\begin{equation}\label{equation}
\Delta_{\omega}\phi_{j}+\lambda_{j}^{\omega}(X)\phi_{j}=0, \ ||\phi_{j}||_{L^2}=1 \ \text{with} \  \lambda^{\omega}_{0}(X)<\Re \lambda^{\omega}_{1}(X) \leq \cdots.
\end{equation}
  Anantharaman \cite[Appendix]{AN1} proved that the principal eigenvalue $\lambda^{\omega}_{0}(X)$ is real and simple, and obtained the explicit value of $\lambda^{\omega}_{0}(X)$:
\begin{equation}\label{principalvalue}
  \lambda^{\omega}_{0}(X)=\mathrm{Pr}(\omega)(1-\mathrm{Pr}(\omega)),
\end{equation}
where the pressure $\mathrm{Pr}(\omega)$ (see \cite{KH})\footnote{Here $\omega$ is identified with $\omega(x,\xi)=\langle \omega,\xi \rangle_{x}$ for $(x,\xi) \in S^*X$.} satisfies 
\begin{equation}\label{estiofprin}
  1\leq \mathrm{Pr}(\omega)\leq 1+||\omega||_{L^{\infty}}, \ ||\omega||_{L^{\infty}}=\sup\limits_{x\in X}|\omega|_{x}.
\end{equation}
The \eqref{principalvalue} can also be obtained by Naud-Spilioti \cite[Proposition 2.3]{NS}. \\
\indent In this paper, we consider the spectrum of twisted Laplacian on typical hyperbolic surfaces of high genus. Monk \cite[Theorem 5]{ML2}, also see Monk \cite{ML1}, proved a uniform Weyl law for the usual Laplacian on typical surfaces of high genus. Recently, Monk-Stan \cite{MS23} proved the uniform Weyl law for Dirac spectrum of typical surfaces with any non-trivial spin structure. To describe her results, we introduce some notations.
\subsection{Uniform Weyl law for the usual Laplacian on random Weil-Petersson surfaces}
Let $\mathcal{M}_g$ be the moduli space of all closed hyperbolic surfaces of genus $g$, up to isometry. There is a natural probability measure $\mathbb{P}^{\mathrm{WP}}_{g}$ on the moduli space $\mathcal{M}_{g}$ (see \cite[Section 2]{MM} and \cite[Section 3.1.4.4]{ML1}). We say that a sequence of measurable events $\mathcal{E}_{g}\subset \mathcal{M}_{g}$ occurs with high probability if $\lim\limits_{g \to \infty}\mathbb{P}^{\mathrm{WP}}_{g}(\mathcal{E}_{g})=1$, and call a surface $X$ typical if $X \in \mathcal{E}_{g}$. We notice that for any two sequences of events $\mathcal{X}_{g}$ and $\mathcal{Y}_{g}$ which occur with high probability, $\mathcal{X}_{g} \cap \mathcal{Y}_{g}$ also occurs with high probability.\\
\indent Monk \cite{ML2} introduced the event $\mathcal{A}_{g}$ as follows: $X \in \mathcal{A}_{g}$ if
  \begin{equation}\label{geoassump}
    \mathrm{Inj}(X)\geq r_{g}=g^{-\frac{1}{24}}(\log g)^{\frac{9}{16}}, \quad \frac{\mathrm{Vol}(\{z\in X: \mathrm{Inj}_{z}(X)<\frac{1}{6}\log g\})}{\mathrm{Vol}(X)} \leq g^{-\frac{1}{3}},
  \end{equation}
   where $\mathrm{Inj}_{z}(X)$ is the injectivity radius at $z$ of $X$, and $\mathrm{Inj}(X)=\inf\limits_{z \in X}\mathrm{Inj}_{z}(X)$ is the injectivity radius of $X$. Recall that injectivity radius at a point $z$ of a Riemannian manifold $X$ is the largest radius for which the exponential map at $z$ is a diffeomorphism. By Mirzakhani \cite{MM} and Monk \cite{ML2}, $\lim\limits_{g \to \infty}\mathbb{P}^{\mathrm{WP}}_{g}(\mathcal{A}_{g})=1.$ Let $N^{P}_{X}(\lambda \in A)$ be the number of eigenvalues $\lambda$ of the operator $P$ on $X$ which satisfy $\lambda \in A$ and $\mathbf{1}_{A}$ be the indicator function of set $A$.
    \begin{theorem}[Monk, \cite{ML1, ML2}]\label{uniweyl}
      There is a universal constant $C_{0}>0$ and $g_{0}>0$ such that, for any $g>g_{0}$, any $0\leq a\leq b$ and any $X\in \mathcal{A}_{g},$ we have
      \begin{equation*}
          \frac{N_{X}^{-\Delta}(\lambda \in [a,b])}{\mathrm{Vol}(X)}=\frac{1}{4\pi}\int^{\infty}_{\frac{1}{4}} \mathbf{1}_{[a,b]}(\lambda)\tanh \left ( \pi\sqrt{\lambda-\frac{1}{4}} \right ) d\lambda+R(X,a,b),
      \end{equation*}
      where
      \begin{equation*}
          -C_{0}\sqrt{\frac{b+1}{\log g}}\leq R(X,a,b) \leq C_{0}\sqrt{\frac{b+1}{\log g}} \left[ \log \left(2+(b-a) \sqrt{\frac{\log g}{b+1}}\right) \right]^{\frac{1}{2}}.
      \end{equation*}
  \end{theorem}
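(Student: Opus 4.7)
The plan is to derive this Weyl law from the Selberg trace formula, using the geometric constraints in $\mathcal{A}_g$ to control the length-spectrum side. Setting $\lambda_j = 1/4 + r_j^2$ in the standard way, I would pick a cutoff $T \asymp \sqrt{\log g}$ and construct smooth even test functions $h_\pm$ on $\mathbb{R}$ with $h_-(r) \leq \mathbf{1}_{[a,b]}(1/4 + r^2) \leq h_+(r)$, smoothed at scale $1/T$ in $r$ and with Fourier transforms $\hat h_\pm$ supported in $[-T,T]$. The Selberg trace formula reads
$$\sum_j h_\pm(r_j) = \frac{\mathrm{Vol}(X)}{4\pi}\int_{\mathbb{R}} h_\pm(r)\,r\tanh(\pi r)\,dr + \sum_{\gamma}\frac{\ell(\gamma_0)\,\hat h_\pm(\ell(\gamma))}{2\sinh(\ell(\gamma)/2)},$$
where $\gamma$ runs over oriented closed geodesics and $\gamma_0$ is the primitive underlying $\gamma$. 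The identity term reproduces the Weyl density integral up to a smoothing error of order $\sqrt{(b+1)/\log g}$ per unit volume, since $r$-smoothing at scale $1/T$ translates to $\lambda$-smoothing at scale $\sqrt{b+1}/T$.

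The key work is to show the geometric side is of smaller order on $X \in \mathcal{A}_g$. I would split the length sum at an intermediate scale $L_0 \asymp \log g$. For $\ell(\gamma) \geq L_0$, the exponential decay $1/(2\sinh(\ell/2)) \lesssim e^{-\ell/2}$ combined with the deterministic polynomial bound on $\#\{\gamma : \ell(\gamma) \leq L\}$ absorbs the tail effortlessly. For $\ell(\gamma) < L_0$, the crucial observation is that any closed geodesic of length $\ell$ forces a point of $X$ with injectivity radius at most $\ell/2 \leq \tfrac{1}{6}\log g$; together with the thin-part bound $\mathrm{Vol}(\{\mathrm{Inj}_z < \tfrac{1}{6}\log g\}) \leq g^{-1/3}\mathrm{Vol}(X)$ and a collar / covering argument, this yields a total weighted contribution of size $o(\mathrm{Vol}(X) \cdot \sqrt{(b+1)/\log g})$. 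The injectivity radius lower bound $\mathrm{Inj}(X) \geq r_g$ meanwhile prevents the denominator $2\sinh(\ell/2)$ from collapsing on the shortest loops.

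The main obstacle is the asymmetric factor $[\log(2+(b-a)\sqrt{\log g/(b+1)})]^{1/2}$ appearing only in the upper bound. It reflects a Beurling--Selberg--type fact: a nonnegative smooth majorant of $\mathbf{1}_{[a,b]}(1/4+r^2)$ with Fourier support in $[-T,T]$ must overshoot in $L^1$ by a factor of order $\sqrt{\log(T(r_b - r_a))}$, where $r_b - r_a \asymp (b-a)/\sqrt{b+1}$, whenever the $r$-interval is much longer than $1/T$; with $T\asymp \sqrt{\log g}$ this reproduces exactly the stated log factor. For the lower bound one instead takes $h_-$ supported inside $\{r : 1/4+r^2 \in [a,b]\}$, yielding $\sum_j h_-(r_j) \leq N^{-\Delta}_X(\lambda \in [a,b])$ directly and incurring no such loss. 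Optimising $T$ and the shape of $h_\pm$ subject to these constraints then produces the claimed two-sided estimate.
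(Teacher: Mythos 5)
First, a structural point: the paper does not prove Theorem \ref{uniweyl} at all --- it is imported verbatim from Monk \cite{ML1,ML2} and used as a black box (the only related argument in the paper is the heat-kernel computation of Section \ref{sec2}, which yields the much weaker qualitative statement of Theorem \ref{infform}). So there is no internal proof to compare against. Your overall architecture --- Selberg trace formula with band-limited test functions, smoothing error for the identity term, and control of the geometric term via the two defining conditions of $\mathcal{A}_g$ (thin-part volume $\leq g^{-1/3}\mathrm{Vol}(X)$, plus $\mathrm{Inj}(X)\geq r_g$ to run the lattice-point count) --- is indeed the strategy behind Monk's theorem and is consistent with how this paper handles the analogous geometric term for the heat kernel.

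That said, the proposal has concrete gaps. (1) The mechanism you give for the logarithmic factor is a false extremal statement: the Beurling--Selberg majorant of an interval with Fourier transform supported in $[-T,T]$ has $L^1$ excess of order exactly $1/T$, \emph{independent} of the length of the interval; there is no forced overshoot of order $\sqrt{\log(T(r_b-r_a))}$. With that claim removed, your accounting (identity-term error $\asymp\sqrt{(b+1)/\log g}$ per unit volume, geometric term of strictly smaller order) would prove the upper bound with no logarithm at all, i.e.\ a statement strictly stronger than Monk's --- a sign that the geometric side has not been bounded correctly. The logarithm in fact has to be generated on the geometric side: the natural bound there involves $\int_{-T}^{T}|\hat h(t)|\,dt$ against the length-spectrum measure, and for a band-limited approximation to the indicator of an $r$-interval of length $r_b-r_a$ one has $\int_{-T}^{T}|\hat h|\asymp\log\bigl(2+T(r_b-r_a)\bigr)$, which with $T\asymp\sqrt{\log g}$ and $r_b-r_a\asymp(b-a)/\sqrt{b+1}$ is precisely the argument of the logarithm in the statement. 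Your one-line dismissal of the short-geodesic contribution as $o(\mathrm{Vol}(X)\sqrt{(b+1)/\log g})$ via an unspecified ``collar/covering argument'' is therefore hiding the entire difficulty. (2) The treatment of long geodesics is internally inconsistent: with $\hat h_\pm$ supported in $[-T,T]$ and $T\asymp\sqrt{\log g}$, the geometric sum contains no geodesics of length $\geq L_0\asymp\log g$, so that case is vacuous; moreover the ``deterministic polynomial bound'' on $\#\{\gamma:\ell(\gamma)\leq L\}$ you invoke does not exist --- the count grows exponentially in $L$ (of order $e^{L}$, uniformly only after dividing out by $\mathrm{Inj}(X)^2$), which is exactly why one needs compact Fourier support or Gaussian decay of the kernel in the first place.
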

\subsection{Main results} 
Schoen-Wolpert-Yau \cite{SWY} introduced the geometric quantity $\mathcal{L}_1(X)$ as follows:
\begin{equation}\label{lone}
\mathcal{L}_1(X):=\min \left\{\ell_{X}(\gamma) : 
\begin{array}{l}
\gamma=\gamma_1+\cdots+\gamma_k \text { is a simple closed } \\
\text { multi-geodesics separating } X
\end{array}\right\}
\end{equation}
This quantity was applied to investigate the first $2g-3$ small eigenvalues of the usual Laplacian on hyperbolic surfaces. Furthermore, the applications of $\mathcal{L}_1(X)$ are discussed in the works of Wu-Xue \cite{WX1, WX4}. Fix a function $\omega(g)$ satisfies $$\lim\limits_{g \to \infty}\omega(g)=\infty, \ \lim\limits_{g \to \infty}\frac{\omega(g)}{\log \log g}=0. $$ 
Nie-Wu-Xue \cite{NWX} defined
\begin{equation}\label{short}
  \mathcal{L}_{g}:=\{X \in \mathcal{M}_{g}: \ \mathcal{L}_1(X)\geq 2\log g-4\log \log g -\omega(g)\},
\end{equation}
and proved that $\lim\limits_{g \to \infty}\mathbb{P}^{\mathrm{WP}}_{g}(\mathcal{L}_g)=1$ in \cite[Theorem 6]{NWX}. \\
\indent Recall that $||\omega||_{L^{2}}^{2}=\int_{X}|\omega|_{x}^{2}d\mathrm{Vol}(x)$. Our main result is the following uniform Weyl law for twisted Laplacian $\Delta_{\omega}$ associated with uniform $L^{2}$ bounded $\omega \in \mathcal{H}^{1}(X,\mathbb{R})$.
\begin{theorem}\label{uniweylfortwist}
  There exists a universal constant $C>0$ such that for any $c>0$, there is $g_{c}>0$ depending only on $c$, for any $g>g_{c}$, any interval $I=[a,b]$ with $b\geq 0$, any $X \in \mathcal{A}_{g} \cap \mathcal{L}_{g}$, and any $\omega \in \mathcal{H}^{1}(X,\mathbb{R})$ with $||\omega||_{L^{2}}\leq c$, we have:
  \begin{equation}\label{maincount}
    \begin{aligned}
        \frac{N_{X}^{-\Delta_{\omega}}(\Re\lambda \in [a,b])}{\mathrm{Vol}(X)}=\frac{1}{4\pi}\int^{\infty}_{\frac{1}{4}} \mathbf{1}_{[a,b]}(\lambda)\tanh \left ( \pi\sqrt{\lambda-\frac{1}{4}} \right ) d\lambda+R_{\omega}(X,a,b),
    \end{aligned}
  \end{equation}    
  where
  \begin{equation}\label{maincountfin}
  -C\sqrt{\frac{b+1}{\log g}}\leq R_{\omega}(X,a,b) \leq C\sqrt{\frac{b+1}{\log g}} \left[ \log \left(2+(b-a) \sqrt{\frac{\log g}{b+1}}\right) \right]^{\frac{1}{2}}.
  \end{equation} 
\end{theorem}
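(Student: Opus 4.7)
The plan is to deduce Theorem \ref{uniweylfortwist} from Monk's Weyl law (Theorem \ref{uniweyl}) via the two ingredients announced in the abstract: a spectral-comparison estimate that bounds $|N_X^{-\Delta_\omega}(\Re\lambda\in[a,b])-N_X^{-\Delta}(\lambda\in[a,b])|$ in terms of $\|\omega\|_{L^\infty}$ and the location of the spectral window; and a pointwise estimate showing that $\|\omega\|_{L^\infty}=o_g(1)$ on $\mathcal{A}_g\cap\mathcal{L}_g$ whenever $\|\omega\|_{L^2}\leq c$.

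\emph{Spectral comparison.} I would write $\Delta_\omega=\Delta+P_\omega$ with $P_\omega f=-2\langle\omega,df\rangle+|\omega|^2 f$. Since $\omega\in\mathcal{H}^1(X,\mathbb{R})$ satisfies $d^*\omega=0$, a direct integration by parts shows that the symmetric part of $\Delta_\omega$ is the self-adjoint operator $\Delta+|\omega|^2$ while the antisymmetric part is the anti-self-adjoint derivation $-2\langle\omega,d\cdot\rangle$. In particular, for any eigenpair $\Delta_\omega\phi=-\lambda\phi$ one finds
$$\Re\lambda\,\|\phi\|_{L^2}^2=\bigl\langle(-\Delta-|\omega|^2)\phi,\phi\bigr\rangle,$$
while the anti-self-adjoint piece contributes $O\bigl(\|\omega\|_{L^\infty}\|d\phi\|_{L^2}\bigr)=O\bigl(\|\omega\|_{L^\infty}\sqrt{\Re\lambda+1}\,\|\phi\|_{L^2}\bigr)$. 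To promote these pointwise bounds into a counting statement, I would use a resolvent or heat-trace comparison: the trace of $e^{-t\Delta_\omega}-e^{-t(\Delta+|\omega|^2)}$ is controlled by the first-order term $-2\langle\omega,d\cdot\rangle$, and a Tauberian argument then yields
$$\bigl|N_X^{-\Delta_\omega}(\Re\lambda\in[a,b])-N_X^{-\Delta-|\omega|^2}(\lambda\in[a,b])\bigr|\lesssim\|\omega\|_{L^\infty}\sqrt{b+1}\cdot\mathrm{Vol}(X).$$
Since $|\omega|^2$ is an $L^\infty$-bounded real potential, Theorem \ref{uniweyl} combined with a standard min-max comparison gives the same Weyl asymptotic for $\Delta+|\omega|^2$ as for $\Delta$, with an additional error of the same order.

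\emph{Sup-norm estimate.} Because $\omega$ is harmonic, $|\omega|^2$ is subharmonic on $X$, so the mean value inequality gives
$$|\omega|_x^2\leq\frac{C}{r^2}\int_{B(x,r)}|\omega|_y^2\,d\mathrm{Vol}(y)$$
for any $r$ at most (essentially) the injectivity radius at $x$. On $\mathcal{A}_g$, the relative volume of $\{z:\mathrm{Inj}_z(X)<\tfrac16\log g\}$ is at most $g^{-1/3}$; for $x$ in the complementary $\tfrac16\log g$-thick part, choosing $r=\tfrac16\log g$ yields $|\omega|_x\leq C\|\omega\|_{L^2}/\log g$. The thin part is a disjoint union of collar neighborhoods of short simple closed geodesics, and the condition $\mathcal{L}_g$ forces every separating simple closed multi-geodesic to have length at least $2\log g-4\log\log g-\omega(g)$, severely restricting the topology of the thin part. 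I would then work in Fermi coordinates around each short geodesic, expand the lift of $\omega$ in a Fourier series in the angular variable, and use the global $L^2$ bound together with the $\mathcal{L}_g$-driven topological restriction to control each Fourier mode, yielding a pointwise bound of the same order on each collar.

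Combining the two steps on $X\in\mathcal{A}_g\cap\mathcal{L}_g$ with $\|\omega\|_{L^2}\leq c$ gives $\|\omega\|_{L^\infty}=o_g(1)$ uniformly in such $\omega$, and the perturbation error from the spectral-comparison step is then absorbed into the Monk remainder $O\bigl(\sqrt{(b+1)/\log g}\bigr)$, producing \eqref{maincountfin}. The \textbf{main obstacle} is the sup-norm estimate on the thin part: the mean value inequality with the worst-case injectivity radius $r_g=g^{-1/24}(\log g)^{9/16}$ is far too weak, so the delicate analysis of harmonic $1$-forms on hyperbolic collars, driven by the $\mathcal{L}_g$ condition, is where the real work is concentrated.
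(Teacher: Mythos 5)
Your two-step architecture (perturbation estimate controlled by $\|\omega\|_{L^\infty}$, then a sup-norm bound on $\mathcal{A}_g\cap\mathcal{L}_g$) matches the paper's, but both key steps as you describe them have genuine gaps.

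\emph{Spectral comparison.} The heat-trace-plus-Tauberian route does not work here, and this is precisely the difficulty the paper's Section 3 is built to overcome. Because $\Delta_\omega$ is non-self-adjoint, its eigenvalues are complex and non-normal spectral instability means that neither a trace comparison nor "the perturbation has small norm, so eigenvalues move little" yields a counting bound: one must control the resolvent $(z-P_\omega)^{-1}$ throughout a complex neighborhood of the interval, not just traces. The paper does this by introducing a $\delta$-spectral truncation $P^\delta$ of $P=-h^2\Delta$ with $\delta\asymp\|\omega\|_{L^\infty}h$, proving Neumann-series resolvent bounds for $P^\delta_\omega$ off the spectrum of $P^\delta$, identifying $N^{P^\delta_\omega}$ with $N^P$ by a contour-deformation argument, and then comparing $N^{P_\omega}$ with $N^{P^\delta_\omega}$ via relative Fredholm determinants and Sj\"ostrand's theorem on zeros of holomorphic functions with harmonic majorants. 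The resulting error is governed by $\mathcal{N}_X(I,h)$, the number of unperturbed eigenvalues within $\delta$ of $\partial I$, which is where Monk's Theorem \ref{uniweyl} enters a second time. Your claimed intermediate bound is of the right order of magnitude, but the Tauberian argument you invoke only yields the qualitative convergence of Section 2 (Theorem \ref{infform}), not a quantitative remainder.

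\emph{Sup-norm estimate.} Two problems. First, $|\omega|^2$ is not subharmonic on a hyperbolic surface: Bochner gives $\tfrac12\Delta|\omega|^2=|\nabla\omega|^2+\mathrm{Ric}(\omega,\omega)=|\nabla\omega|^2-|\omega|^2$, so $|\omega|^2$ is only a subsolution of $\Delta+2$. Even granting a mean value inequality for such subsolutions, taking $r=\tfrac16\log g$ does not produce $|\omega|_x\lesssim\|\omega\|_{L^2}/\log g$: the Brock--Dunfield-type estimate one gets this way is $\|\omega\|_{L^\infty}\lesssim\|\omega\|_{L^2}/\sqrt{\mathrm{Inj}}$ with no decay in $g$, because the constant does not improve with the radius of the embedded ball. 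The paper's gain of $g^{-1/8+\epsilon}$ comes from a different mechanism (following Gilmore--Le Masson--Sahlsten--Thomas): a radial averaging operator $A$ on $1$-forms acts on harmonic forms as the scalar $2\pi(\cosh t-1)/\sqrt{\cosh t}\gtrsim e^{t/2}$, while the geometric side is bounded by an $L^2$ lattice-point count; the event $\mathcal{L}_g$ is used to show $N_{(\frac12-\epsilon)\log g}(X)\le 1$ (via the separating multi-geodesic length bound and an isoperimetric argument), which keeps the lattice count linear in $t$ up to $t\approx\tfrac14\log g$ and produces the decay. Second, your fallback of Fourier analysis on collars for the thin part is not carried out, and you yourself identify it as the main obstacle; the paper avoids any thick--thin decomposition for this step, using $\mathrm{Inj}(X)\ge r_g$ only as a harmless $g^{1/48}$ loss. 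As written, neither half of your argument closes.
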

Huber \cite{HUB} proved that the remainder term of the PGT is determined by the small eigenvalues of the usual Laplacian, and Wu-Xue \cite{WX3} investigated the remainder term of the PGT on typical surfaces. Monk \cite{ML1,ML2} and Wu-Xue \cite{WX2} obtained the distribution of small eigenvalues on a typical surface. For the spectrum of twisted Laplacian, we obtain the distribution of small eigenvalues $\lambda$ with $\Re \lambda<\frac{1}{4}$ on a typical surface by Wu-Xue \cite[Theorem 2]{WX2}. 
\begin{theorem}\label{uniweylfortwistsmalleigen}
    Fix an interval $I=[a,b]$ with $0\leq b <\frac{1}{4}$, then for any $0<\varepsilon<\sqrt{\frac{1}{4}-b}$ and $c>0$, there exists a sequence of events $\mathcal{N}_{g}^{b,\varepsilon}\subset \mathcal{M}_{g}$ defined in \eqref{smalleigencount} which occurs with high probability, and $g_{c,\varepsilon}>0$ such that for any $g>g_{c,\varepsilon}$, any $X \in \mathcal{A}_{g}\cap \mathcal{L}_{g} \cap \mathcal{N}_{g}^{b,\varepsilon}$ and any $\omega \in \mathcal{H}^{1}(X,\mathbb{R})$ with $||\omega||_{L^{2}}\leq c$, we have:
    \begin{equation}\label{lowcount1}
      N_{X}^{-\Delta_{\omega}}(\Re \lambda \in [a,b])\leq 1+g^{1-4\sqrt{\frac{1}{4}-b}+\varepsilon}.
    \end{equation}
\end{theorem}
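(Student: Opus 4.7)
The ``$+1$'' in \eqref{lowcount1} is reserved for the principal eigenvalue $\lambda_0^\omega(X)=\mathrm{Pr}(\omega)(1-\mathrm{Pr}(\omega))$, which by \eqref{principalvalue}--\eqref{estiofprin} is real and simple and satisfies $-\|\omega\|_{L^\infty}-\|\omega\|_{L^\infty}^2\le\lambda_0^\omega(X)\le 0$. Using the $L^\infty$-bound on harmonic forms proved earlier in the paper -- namely $\|\omega\|_{L^\infty}\le\eta(g)\to 0$ for $X\in\mathcal{A}_g\cap\mathcal{L}_g$ whenever $\|\omega\|_{L^2}\le c$ -- the principal eigenvalue lies in $[a,b]$ for all sufficiently large $g$. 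It therefore suffices to bound the number of \emph{non-principal} eigenvalues of $-\Delta_\omega$ with real part in $[a,b]$ by $g^{1-4\sqrt{1/4-b}+\varepsilon}$.

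The strategy is a Riesz-projection comparison between $-\Delta_\omega$ and $-\Delta$, followed by the Wu--Xue counting estimate for $-\Delta$. First, I would confine the small eigenvalues of $-\Delta_\omega$: integration by parts using $d^*\omega=0$ gives $\Re\langle-\Delta_\omega f,f\rangle=\|df\|_{L^2}^2-\int_X|\omega|^2|f|^2$ and $\Im\langle-\Delta_\omega f,f\rangle=2\,\Im\int_X\langle\omega,df\rangle\bar f$. For an eigenpair $-\Delta_\omega f=\lambda f$ with $\Re\lambda\le b$, the first identity yields $\|df\|_{L^2}^2/\|f\|_{L^2}^2\le b+\eta(g)^2$, and combined with Cauchy--Schwarz the second yields $|\Im\lambda|\le 2\eta(g)\sqrt{b+\eta(g)^2}$. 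Hence every non-principal eigenvalue with $\Re\lambda\in[a,b]$ sits in a rectangle $R_g$ around $[a,b]$ of height $O(\eta(g))$.

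Next, enclose $R_g$ by a contour $\gamma$ that stays at distance $\gtrsim 1/4-b>0$ from the rest of the spectrum of $-\Delta$. Since $\Delta_\omega-\Delta=-2\langle\omega,d\cdot\rangle+|\omega|^2$ is $\Delta$-relatively bounded with relative bound $O(\eta(g))$, the Neumann expansion $(z+\Delta_\omega)^{-1}=(z+\Delta)^{-1}\sum_{k\ge 0}\bigl((\Delta-\Delta_\omega)(z+\Delta)^{-1}\bigr)^k$ converges uniformly on $\gamma$ for $g$ large, so the Riesz projections of $-\Delta_\omega$ and $-\Delta$ associated with $\gamma$ have equal finite rank. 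The a priori confinement to $R_g$ rules out other eigenvalues of $-\Delta_\omega$ inside $\gamma$, giving $N_X^{-\Delta_\omega}(\Re\lambda\in[a,b])\le 1+N_X^{-\Delta}(\mu\in[a,b+o(1)])$. The event $\mathcal{N}_g^{b,\varepsilon}$, defined via \eqref{smalleigencount} using \cite{WX2}, then bounds the right-hand side by $1+g^{1-4\sqrt{1/4-b}+\varepsilon}$.

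The main obstacle is quantitative: $\eta(g)$ decays only slowly (essentially a log-power) in $g$, so one must match it carefully against the fixed gap $1/4-b$ to guarantee Neumann convergence, and the slight enlargement from $[a,b]$ to $[a,b+o(1)]$ must be absorbed into the $\varepsilon$-loss in the Wu--Xue exponent. Both absorptions are available precisely because $b<1/4$ is strict and the earlier $L^\infty$-bound on $\omega$ is uniform over $\omega\in\mathcal{H}^1(X,\mathbb{R})$ with $\|\omega\|_{L^2}\le c$.
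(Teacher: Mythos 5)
Your overall architecture --- compare $-\Delta_\omega$ with $-\Delta$ by a resolvent/Riesz-projection argument, confine $\Im\lambda$ using the $L^\infty$ bound on $\omega$, and then invoke the Wu--Xue count through the event $\mathcal{N}_g^{b,\varepsilon}$ --- is the same as the paper's, which applies its Theorem \ref{perturoflaplacian} with $h=1$. But there is a genuine gap at the decisive step: you assert that the contour $\gamma$ enclosing $R_g$ ``stays at distance $\gtrsim 1/4-b$ from the rest of the spectrum of $-\Delta$''. Nothing guarantees this near the right edge $\Re z=b$: the spectrum of $-\Delta$ in $(b,\tfrac14)$ may contain on the order of $g^{1-4\sqrt{1/4-b}+\varepsilon}$ eigenvalues, and they can lie arbitrarily close to $b$. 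The Neumann expansion you invoke requires $\|(\Delta-\Delta_\omega)(z+\Delta)^{-1}\|<1$ uniformly on $\gamma$, and near the small eigenvalues this operator norm is of size $\eta(g)/d(z,\sigma(-\Delta))$, so $\gamma$ must keep distance $\gg\eta(g)\approx g^{-1/48}$ from $\sigma(-\Delta)$. A pigeonhole placement of the right edge of $\gamma$ inside $[b,b+\varepsilon^2]$ only produces a spectrum-free gap of width about $\varepsilon^2 g^{-(1-4\sqrt{1/4-b}+\varepsilon)}$, which is smaller than $\eta(g)$ as soon as $1-4\sqrt{1/4-b}+\varepsilon>1/48$, i.e.\ for $b$ sufficiently close to $\tfrac14$; so the argument cannot be closed this way for all $b<\tfrac14$.

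The paper circumvents exactly this obstruction: instead of moving the contour, it replaces $-\Delta$ by its $\delta$-perturbation $P^{\delta}$ (built from \eqref{perop}), which artificially clears a window of width $\delta=4000c(g)$ around $\partial I$ so that the resolvent bounds hold there, at the cost of an error of size $\mathcal{N}_{X}(I,h)=N^{-\Delta}_{X}(\lambda\in\partial I+(-\delta,\delta))$ in Theorem \ref{perturoflaplacian}. The event $\mathcal{N}_{g}^{b,\varepsilon}$ is then used for two purposes, not one: it bounds $N^{-\Delta}_{X}(\lambda\in[a,b])$ by $1+N_{\sigma}(X)$ as in your last step, but it also bounds the error term, since $b+\delta<b+\varepsilon^{2}=\sigma(1-\sigma)$ for large $g$ gives $\mathcal{N}_{X}(I,1)\le N_{\sigma}(X)\le g^{1-4\sqrt{1/4-b-\varepsilon^{2}}+\varepsilon/2}$, which is absorbed into the final exponent. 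Your proposal needs this (or an equivalent) device to control the resolvent near $b$; the remaining elements of your outline --- the confinement of $\Im\lambda$, the role of the principal eigenvalue, and the $\varepsilon$-absorption of the interval enlargement --- are consistent with the paper.
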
  
The spectral gap, namely $\lambda_{1}(X)$, is widely studied in geometry. For some $\alpha > 0$ and any $0<\varepsilon <\alpha$, we define a sequence of events $\mathcal{Q}^{\alpha, \varepsilon}_{g} \subset \mathcal{M}_{g}$ as follows:
\begin{equation}\label{gapevent}
\mathcal{Q}^{\alpha, \varepsilon}_{g}:=\left\{X \in \mathcal{M}_{g}: \lambda_{1}(X) > \alpha - \varepsilon \right\}.
\end{equation}
On random hyperbolic surfaces with the Weil-Petersson probabilistic model, we are concerned with the typical spectral gap $\alpha$ which satisfies for any $0<\varepsilon <\alpha$, $\lim\limits_{g \to \infty}\mathbb{P}^{\mathrm{WP}}_{g}(\mathcal{Q}^{\alpha, \varepsilon}_{g})=1$.\\
\indent Mirzakhani \cite{MM} first proved that $\alpha = \frac{1}{4} \left(\frac{\ln(2)}{2\pi + \ln(2)}\right)^2 \approx 0.002$ is a typical spectral gap. Two independent teams, Wu-Xue \cite{WX2} and Lipnowski-Wright \cite{LW}, significantly improved $\alpha$ to $\frac{3}{16}$. Subsequently, Anantharaman and Monk \cite{AN4} further refined it to $\frac{2}{9}$. Regarding the optimal value of $\alpha$ as a typical gap, it has been shown that $\alpha \leq \frac{1}{4}$ by Huber \cite{HUB2} and Cheng \cite{Cheng}. Wright \cite{WA} conjectured that $\alpha$ equals $\frac{1}{4}$. Very recently, Anantharaman and Monk \cite{AN7} claimed to have proved Wright's conjecture that $\alpha=\frac{1}{4}$. For the twisted Laplacian, we prove the following theorem concerning $\lambda_{1}^{\omega}(X)$:
\begin{theorem}\label{twigap}
  For a typical spectral gap $\alpha \in \left(0,\frac{1}{4}\right]$, any $0<\varepsilon<\alpha$ and $c>0$, there exists $g_{\alpha,c,\varepsilon}>0$ such that for any $g>g_{\alpha,c,\varepsilon}$, any $X \in \mathcal{A}_{g} \cap \mathcal{L}_{g} \cap \mathcal{Q}_{g}^{\alpha, \varepsilon/2}$ and any $\omega \in \mathcal{H}^{1}(X,\mathbb{R})$ with $||\omega||_{L^{2}}\leq c$, we have:
 \begin{equation}
 \Re \lambda_{1}^{\omega}(X)>\alpha-\varepsilon.
 \end{equation}
\end{theorem}
\begin{remark}
  Theorems \ref{uniweylfortwist}, \ref{uniweylfortwistsmalleigen}, and \ref{twigap} also hold for the harmonic form $\omega \in \mathcal{H}^{1}(X,\mathbb{R})$ satisfying the condition that for any constant $c$ and any $0<\epsilon<\frac{1}{8}$, the following inequality holds:
  $$||\omega||_{L^{2}}\leq cg^{\frac{1}{8}-\epsilon}, \quad \omega \in \mathcal{H}^{1}(X,\mathbb{R}).$$
  The fraction $\frac{1}{8}$ is derived from Theorem \ref{deh}. Additionally, under this condition, the constants $g_{\bullet}$ in Theorems \ref{uniweylfortwist}, \ref{uniweylfortwistsmalleigen}, and \ref{twigap} also depend on $\epsilon$.
\end{remark}
\subsection{Related works}
The twisted Laplacian \eqref{twistedlaplacian} is similar to the stationary damped wave equation (see e.g. Lebeau \cite{LB96}, Sj\"{o}strand \cite{SJ1}), which is also a non-self-adjoint perturbation of the Laplacian. For the stationary damped wave operator, the real and imaginary parts of the complex eigenvalues correspond to the frequencies of oscillation and the exponential decay rate of the damped wave, respectively. The spectral distribution of the damped wave operator on a fixed manifold has been well studied and here we only mention a few results and refer the reader to these papers for further references. Markus-Matsaev \cite{MAMV} (also Sj\"{o}strand \cite{SJ1}) first proved an analogue of Weyl law for the distribution of the real parts of the spectrum. For the distribution of the imaginary parts of the spectrum, Lebeau \cite{LB96} studied the spectral gap which is related to the decay rate of the energy of damped wave, and Sj\"{o}strand \cite{SJ1} obtained the average distribution. A recent monograph of Sjöstrand \cite{SJ2} systematically studied the spectral distribution of non-self-adjoint perturbations of the second-order elliptic operators.\\
\indent Heuristically, the spectral distribution of the Laplacian or its non-self-adjoint perturbations in the high frequency limit is related to the dynamics of the geodesic flow on the manifold. For hyperbolic surfaces or more general closed manifolds with negative curvatures, the geodesic flow is chaotic. The study of the spectral distribution of both the Laplacian and its non-self-adjoint perturbations in the high frequency limit belongs to the area of quantum chaos, see Nonnenmacher \cite{NS11} and Zelditch \cite{ZS19} for recent surveys. For damped wave operators on manifolds with chaotic geodesic flows, we refer to Schenck \cite{SE}, Jin \cite{JL}, and Dyatlov-Jin-Nonnenmacher \cite{DJN} for some recent results on better spectral gaps and Anantharaman \cite{AN2} for fractal Weyl upper bounds for the number of eigenvalues in certain strips.\\
\indent As the work of Monk \cite{ML1,ML2}, our results are about the spectral distribution in large scale limit instead of the high frequency limit. This large scale limit of hyperbolic surfaces can be viewed as another way to formulate quantum chaos; see Anantharaman's ICM talk \cite{AN3}.  Some recent works in this direction include Le Masson-Sahlsten \cite{LS,EL2} for quantum ergodicity, Gilmore-Le Masson-Sahlsten-Thomas \cite{GCLS} for $L^p$ estimate of eigenfunctions, Thomas \cite{TJ} for delocalization of eigenfunctions, and Rudnick \cite{RZ} for GOE statistics.\\
\indent The twisted Laplacian \eqref{twistedlaplacian} is also identified with the twisted Bochner Laplacian on the flat bundle constructed by the non-unitary representation of the fundamental group, see \cite{Kob}. M{\"u}ller \cite{MW}, Spilioti \cite{SP} and Frahm-Spilioti \cite{FS} studied the twisted trace formula and zeta functions, which relate the spectrum, the closed geodesics, and the representation. Furthermore, Naud-Spilioti \cite{NS} proved the Weyl law for the non-unitary twisted Bochner Laplacian with Teichm\"{u}ller type representation by the method of thermodynamic formalism. Their results are similar to those in \cite{AN2} and \cite{SJ1}, while the proof is very different.
\subsection*{Organization of the paper}
The paper is organized as follows: In Section \ref{sec2}, we prove Theorem \ref{infform}, which shows that the spectral distribution for $\Delta_{\omega}$ converges to the main term of Theorem \ref{uniweylfortwist} uniformly for a typical surface $X \in \mathcal{A}_{g}$. In Section \ref{sec3}, we prove Theorem \ref{semiweyltwi}, which estimates the remainder term in Theorem \ref{infform} by the $L^{\infty}$ norm of the harmonic form. In Section \ref{sec4}, we prove Theorem \ref{deh}, which controls the $L^{\infty}$ norm of the harmonic form by its $L^{2}$ norm on a typical surface $X\in \mathcal{L}_{g}$. In Section \ref{maintheoremsection}, we combine the results in Sections 3 and 4 to prove the main Theorems \ref{uniweylfortwist}, \ref{uniweylfortwistsmalleigen}, and \ref{twigap}. In Appendix \ref{app2}, we prove Theorem \ref{delocaleigen} which generalizes Theorem \ref{deh} to eigenforms.
\subsection*{Notation}
In the following, when the dependence of a constant on parameters is crucial, we indicate the parameters on which they depend. The notation $f \lesssim g$ or $f \gtrsim g$ means that there exists a universal constant $C>0$ such that $f \leq C g$ or $f\geq Cg$ for any choice of parameters, respectively. If the constant depends on some parameter $x$, we write $f\lesssim_{x} g$ or $f \gtrsim_{x} g$. $\sigma(T)$ denotes the set of the eigenvalues of an operator $T$. $d\mathrm{Vol}(x)$ and $dA(z)$ denote the volume elements of hyperbolic plane $\mathbb{H}$ and complex plane $\mathbb{C}$, respectively. $\Omega^{k}(X)$ denotes the space of $k$-forms on $X$.

\section{The convergence of spectral distribution}\label{sec2}
By \eqref{eq30}, we observe that the imaginary part of the eigenvalue, $\Im \lambda_{j}^{\omega}$, can be estimated by the $L^\infty$ norm of $\omega$. Therefore, we investigate the spectral distribution of the twisted Laplacian under the $L^\infty$ boundedness of $\omega$, rather than the $L^2$ boundedness of $\omega$ in main Theorem \ref{uniweylfortwist}. In this section, we establish the following theorem:
\begin{theorem}\label{infform}
  Fix a closed interval $[a,b]$ and a constant $c>0$. For any $X \in \mathcal{A}_{g}$ defined in \eqref{geoassump}, we have:
  \begin{equation}\label{inftyre}
    \lim _{g \rightarrow \infty} \sup _{X \in \mathcal{A}_g} \sup _{\substack{\omega \in \mathcal{H}^1(X, \mathbb{R}) \\\|\omega\|_{L^{\infty}} \leqslant c}} \left | \frac{N_{X}^{-\Delta_{\omega}}(\Re \lambda \in [a,b])}{\mathrm{Vol}(X)} - \frac{1}{4\pi}\int_{\frac{1}{4}}^{\infty}\mathbf{1}_{[a,b]}(\lambda)\tanh \left ( \pi\sqrt{\lambda-\frac{1}{4}} \right ) d\lambda \right |=0,
  \end{equation}
  and for any $\epsilon>0$, 
  \begin{equation}\label{inftyim}
    \lim _{g \rightarrow \infty} \sup _{X \in \mathcal{A}_g} \sup _{\substack{\omega \in \mathcal{H}^1(X, \mathbb{R}) \\\|\omega\|_{L^{\infty}} \leqslant c}} \frac{N_{X}^{-\Delta_{\omega}}(\Re \lambda \in [a,b], \Im \lambda \notin (-\epsilon,\epsilon))}{\mathrm{Vol}(X)}=0.
  \end{equation}
\end{theorem}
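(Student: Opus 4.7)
The plan is to compute the traces $\mathrm{Tr}\,\Phi(-\Delta_{\omega})$ against a suitable class of two-dimensional test functions $\Phi$ on $\mathbb{C}$ via a twisted Selberg-type pre-trace formula. The starting point is that since $\omega$ is harmonic, lifting to the universal cover $\mathbb{H}$ gives $\omega = d\tilde{\phi}$ with $\tilde{\phi}(x) = \int_{o}^{x}\omega$, and co-closedness of $\omega$ forces $\Delta\tilde{\phi} = 0$. This yields the conjugation identity $\Delta_{\omega} = e^{\tilde{\phi}}\,\Delta\,e^{-\tilde{\phi}}$ on $\mathbb{H}$ (the second definition of $\Delta_{\omega}$ already given in the introduction). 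For $\Phi$ in an appropriate functional calculus, the integral kernel of $\Phi(-\Delta_{\omega})$ on $X$ is then expressed via the hyperbolic kernel $K_{\Phi}^{\mathbb{H}}$ and a sum over the deck group $\Gamma = \pi_{1}(X)$; on the diagonal it takes the form
\[
K_{\Phi}^{\omega}(x,x) = \sum_{\gamma \in \Gamma} e^{-\int_{\tilde{x}}^{\gamma \tilde{x}}\omega}\, K_{\Phi}^{\mathbb{H}}(\tilde{x},\gamma\tilde{x}).
\]
Integrating over $X$, the identity contribution $\gamma = e$ produces the main term $\mathrm{Vol}(X)\cdot K_{\Phi}^{\mathbb{H}}(0,0)$, which by spherical Plancherel on $\mathbb{H}$ equals $\mathrm{Vol}(X)\int_{1/4}^{\infty}\Phi(\lambda)\,\frac{1}{4\pi}\tanh(\pi\sqrt{\lambda-1/4})\,d\lambda$ whenever $\Phi$ agrees with the desired function on $[1/4,\infty)$. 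Crucially this main term does not depend on $\omega$, so we recover exactly the main term in \eqref{inftyre}.

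For the non-identity terms, the twist factor is bounded by $|e^{-\int_{\tilde{x}}^{\gamma\tilde{x}}\omega}| \leq e^{c\,\ell_{X}(\gamma)}$ under $\|\omega\|_{L^{\infty}} \leq c$, so at most an exponential factor in the geodesic length is produced. I would choose $\Phi$ so that $K_{\Phi}^{\mathbb{H}}(\tilde{x},\tilde{y})$ is compactly supported in the hyperbolic distance $d(\tilde{x},\tilde{y})$ (a spherical Paley-Wiener construction) or at least decays fast enough to absorb this exponential. With this choice only geodesic loops of bounded length contribute, and since $X \in \mathcal{A}_{g}$ has injectivity radius $\mathrm{Inj}(X) \geq r_{g} \to \infty$ together with the volume bound on the thin part, the non-identity contribution is $o(\mathrm{Vol}(X))$ as $g \to \infty$, uniformly in $X \in \mathcal{A}_{g}$ and in harmonic forms with $\|\omega\|_{L^{\infty}} \leq c$. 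In particular $\mathrm{Tr}\,\Phi(-\Delta_{\omega})/\mathrm{Vol}(X) \to K_{\Phi}^{\mathbb{H}}(0,0)$ uniformly in this sense.

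To pass from trace convergence to the counting statements, I first establish an a priori bound on the imaginary parts. Using that the harmonic form $\omega$ is co-closed, the operator $f \mapsto \langle \omega, df\rangle$ is formally skew-adjoint on $L^{2}(X)$, and a short quadratic-form computation with an eigenpair $-\Delta_{\omega}\phi_{j} = \lambda_{j}^{\omega}\phi_{j}$ yields $|\Im \lambda_{j}^{\omega}|^{2} \leq 16\,c^{2}\,(\Re \lambda_{j}^{\omega} + c^{2})$. Therefore all eigenvalues with $\Re\lambda \in [a,b]$ live in a compact rectangle of $\mathbb{C}$ depending only on $a,b,c$, and it suffices to test against $\Phi \in C_{c}^{\infty}(\mathbb{C})$. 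Approximating $\mathbf{1}_{\{\Re z \in [a,b]\}}$ from above and below by such $\Phi$ whose restriction to $[1/4,\infty)$ sharply brackets $\mathbf{1}_{[a,b]}$, and using that the Plancherel measure is supported on $[1/4,\infty)\subset\mathbb{R}$, yields \eqref{inftyre}. For \eqref{inftyim} I pick $\Phi \in C_{c}^{\infty}(\mathbb{C})$ supported in $\{\Re z \in [a,b],\,|\Im z|\geq \epsilon/2\}$ and dominating the indicator of $\{\Re z \in [a,b],\,|\Im z|\geq \epsilon\}$; such $\Phi$ vanishes on the real axis, so its main term $\mathrm{Vol}(X)\cdot K_{\Phi}^{\mathbb{H}}(0,0)$ is $0$, and the previous paragraph forces the count to be $o(\mathrm{Vol}(X))$.

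The main obstacle is twofold. First, making the functional calculus for the non-self-adjoint $-\Delta_{\omega}$ rigorous for two-dimensional test functions on $\mathbb{C}$; I expect to use a Helffer-Sj\"{o}strand-type formula involving $\bar{\partial}$ of an almost-analytic extension of $\Phi$, combined with resolvent bounds coming from the a priori imaginary-part estimate above. Second, constructing $\Phi$ whose hyperbolic kernel $K_{\Phi}^{\mathbb{H}}$ is simultaneously sharply localized in the distance variable (so that the twisted geometric error is controlled on $\mathcal{A}_{g}$) and a tight approximation to the required indicator on $\mathbb{C}$; this reduces via the spherical transform to a Paley-Wiener construction, but uniformity in $\omega$ under only the $L^{\infty}$ bound requires that the exponential twist factor $e^{c\,\ell_{X}(\gamma)}$ be dominated by the decay of $K_{\Phi}^{\mathbb{H}}$ off the diagonal.
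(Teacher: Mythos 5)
Your overall architecture matches the paper's: twisted pre-trace formula with the identity term giving the Plancherel main term, control of the geometric term on $\mathcal{A}_g$ via the injectivity-radius and thin-part hypotheses, the a priori bound $|\Im\lambda_j^\omega|\lesssim c\sqrt{b+c^2}$ confining the relevant spectrum to a fixed region $\Omega$, and bracketing of indicators by nicer test functions. However, the step you flag as the ``main obstacle'' --- a two-dimensional functional calculus $\Phi(-\Delta_\omega)$ for $\Phi\in C_c^\infty(\mathbb{C})$ via Helffer--Sj\"ostrand, with a Paley--Wiener kernel compactly supported in distance --- is a genuine gap, and not merely a technical one. For a non-normal operator and a non-holomorphic $\Phi$, the identity $\mathrm{Tr}\,\Phi(-\Delta_\omega)=\sum_j\Phi(\lambda_j^\omega)$ that your argument needs is simply not available: the Helffer--Sj\"ostrand formula defines an operator, but its trace is not the sum of $\Phi$ over the eigenvalues unless $\Phi$ is holomorphic near the spectrum (eigenprojections are not orthogonal, and the resolvent near complex eigenvalues is not controlled by $|\Im z|^{-1}$). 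So the reduction from trace convergence to eigenvalue counting does not go through as written.

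The paper sidesteps both halves of your obstacle at once by testing only against the heat semigroup. Since $e^{-tz}$ is entire, $e^{t\Delta_\omega}$ is defined by holomorphic functional calculus and Lidskii's theorem gives $\mathrm{Tr}\,e^{t\Delta_\omega}=\sum_j e^{-t\lambda_j^\omega}$ with no ambiguity; and the Gaussian off-diagonal decay $k(t,x,y)\lesssim t^{-1}e^{-d(x,y)^2/(8t)}$ trivially dominates the twist factor $e^{c\,d(x,\gamma x)}$, so no Paley--Wiener localization is needed --- a lattice-point count $\#\Gamma_r(x)\lesssim e^{r}/r_g^2$ plus the splitting of the fundamental domain according to $\mathrm{Inj}_x(X)\lessgtr\frac16\log g$ yields the uniform heat-trace asymptotic \eqref{heatker}. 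The passage to general test functions is then done entirely at the level of the \emph{scalar} sums $S_X(f)=\frac{1}{\Vol(X)}\sum_j f(\lambda_j^\omega)$: because all eigenvalues with $\Re\lambda\geq a-1$ lie in the fixed strip $\Omega$, one approximates $f e^{z}$ uniformly on $\Omega$ by finite combinations $\sum_k a_k e^{-t_k z}$ (Stone--Weierstrass on $C_\infty(\Omega)$), and the error is absorbed by $S_X(e^{-z})$, which the heat-trace bound controls. No operator-level calculus for non-holomorphic symbols ever appears. If you replace your Helffer--Sj\"ostrand step by this exponential-approximation argument, your proof closes and coincides with the paper's.
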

For any rectangle $R \subset \mathbb{C}$, Theorem \ref{infform} implies that: $$\lim_{g \to \infty}\frac{\mathrm{N}_{X_{g}}^{-\Delta_{\omega}}(\lambda \in R)}{\mathrm{Vol}\left(X_g\right)}=\frac{1}{4 \pi} \int_{\frac{1}{4}}^{\infty} \mathbf{1}_{R}(\lambda) \tanh \left(\pi \sqrt{\lambda-\frac{1}{4}}\right) \mathrm{d} \lambda,$$ where $X_{g}$ represents a uniformly discrete sequence of closed hyperbolic surfaces with injectivity radius $\mathrm{Inj}(X_{g})$ larger than some $r>0$, converging to $\mathbb{H}$ in the sense of Benjamini-Schramm, see \cite{LS} and \cite{ML2}.\\
\indent The proof of Theorem \ref{infform} follows Le Masson-Sahlsten \cite[Lemma 9.1]{LS}. It is based on the twisted Selberg pre-trace formula (e.g. see Anantharaman \cite[Proposition 7.1]{AN2}). Let the spectral parameter $r_{j}^{\omega}(X)$ be $\lambda^{\omega}_{j}(X)=\frac{1}{4}+r_{j}^{\omega}(X)^{2}$:
\begin{theorem}[Selberg pre-trace formula]\label{Sel}
  For $\omega \in \mathcal{H}^{1}(X,\mathbb{C})$:
  \begin{equation*}\label{Sel1}
  \sum_{j=0}^{\infty}h(r_{j}^{\omega}(X))=\frac{\mathrm{Vol}(X)}{4\pi}\int_{-\infty}^{\infty}h(r)r\tanh(\pi r)dr+\sum_{\gamma \in \Gamma-\{ \mathrm{Id} \}}\int_{D}e^{-\int_{\gamma}\omega}k(x,\gamma x)d\Vol(x),
  \end{equation*}
  where $D$ is a fundamental domain for the universal cover $\pi: \mathbb{H} \to X$, the radial kernel function $k(x,y)=K(d(x,y))$ is smooth enough, and decays faster than any exponential and $h=\mathcal{S}(K)$ is the Selberg transform of $K$, which is defined as the following:
  $$S(k)(r)=\int_{-\infty}^{\infty}e^{iru}g(u)du, \quad g(u)=\sqrt{2}\int_{|u|}^{\infty}\frac{K(\rho)\sinh \rho}{\sqrt{\cosh \rho -\cosh u}}d\rho.$$
\end{theorem}
The three terms in Theorem \ref{Sel} from left to right are called spectral average term, topological term, and geometric term, respectively. We choose $k$ to be the heat kernel $k(t,z,w)$:
\begin{equation*}
  \begin{aligned}
    k(t, z, w)=K(t,d(z,w))=\frac{\sqrt{2} e^{-\frac{t}{4}}}{(4 \pi t)^{\frac{3}{2}}} \int_{d(z, w)}^{\infty} \frac{u e^{-\frac{u^2}{4 t}} d u}{\sqrt{\cosh u-\cosh d(z, w)}}.
    \end{aligned}
\end{equation*} 
The Selberg transform of $K(t,\cdot)$ is 
$$h(t,r)=\mathcal{S}(K(t,\cdot))(r)=e^{-(\frac{1}{4}+r^{2})t}.$$
We obtain the following heat trace formula:
\begin{equation}\label{Selheat}
  \begin{aligned}
\sum_{j=0}^{\infty}e^{-t\lambda_{j}^{\omega}(X)}
=&\frac{\mathrm{Vol}(X)}{4\pi}\int_{-\infty}^{\infty}e^{-t(\frac{1}{4}+r^{2})}r\tanh(\pi r)dr\\
&+\sum_{\gamma \in \Gamma-\{\mathrm{Id}\}}\int_{D}e^{-\int_{\gamma}\omega}k(t,x,\gamma x)d\Vol(x).
  \end{aligned}
\end{equation}
In the following, we assume that $X \in \mathcal{A}_{g}$ and prove that the geometric term of \eqref{Selheat} is negligible when $g \to \infty$.
\subsection{Estimate of the heat trace}
In what follows, we fix a $t>0$ in heat trace formula \eqref{Selheat}. 
By \cite[Lemma 7.4.26]{BU}, there is a constant $C > 0$ such that for any $t > 0$,
\begin{equation*}
k(t,x,y)\leq Ct^{-1}e^{-\frac{d(x,y)^2}{8t}}.
\end{equation*}
Thus we have $e^{-\int_{\gamma}\omega}\leq e^{||\omega||_{L^{\infty}}\ell(\gamma)}$ and $k(t,x,\gamma x) \leq Ct^{-1}e^{-\frac{d(x,\gamma x)^2}{8t}}.$
Since $||\omega||_{L^{\infty}}\leq c$, we have 
\begin{equation*}
\sum_{\gamma\in \Gamma-\{ \mathrm{Id} \}}e^{-\int_{\gamma}\omega}\int_{D}k(t,x,\gamma x)d\Vol(x)\leq Ct^{-1}\sum_{\gamma\in \Gamma-\{ \mathrm{Id} \}}\int_{D}e^{cd(x,\gamma x)-\frac{d(x,\gamma x)^2}{8t}}d\Vol(x).
\end{equation*}
We need the following lemma for our estimate:
\begin{lemma}
    Let $\Gamma_{r}(x):=\{\gamma \in \Gamma: r< d(x,\gamma x)\leq r+1\}$ for any $x\in X$ then 
    \begin{equation}\label{latticecount}
      \# \Gamma_{r}(x) \leq \frac{\cosh(r+1+r_{g})-\cosh(r-r_{g})}{\cosh(r_{g})-1}\leq \frac{e^{r_{g}+1}}{r_{g}^{2}}e^{r}.
    \end{equation}
\end{lemma}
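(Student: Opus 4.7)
The plan is to run a standard lattice-point count in the universal cover $\mathbb{H}$, using the injectivity-radius lower bound in the definition of $\mathcal{A}_g$ to pack disjoint small balls inside a hyperbolic annulus. First I would fix a lift $\tilde{x} \in \mathbb{H}$ of $x$. The assumption $\mathrm{Inj}(X)\geq r_g$ in \eqref{geoassump} is equivalent to $d(\tilde{x},\gamma \tilde{x}) \geq 2 r_g$ for every $\gamma \in \Gamma \setminus \{\mathrm{Id}\}$, and applying this inequality at each orbit point shows that the hyperbolic balls $\{B(\gamma\tilde{x}, r_g)\}_{\gamma \in \Gamma}$ are pairwise disjoint in $\mathbb{H}$.

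Next, for any $\gamma \in \Gamma_r(x)$ we have $r < d(\tilde{x},\gamma\tilde{x}) \leq r+1$, so by the triangle inequality every $y \in B(\gamma\tilde{x},r_g)$ satisfies $r - r_g < d(\tilde{x},y) < r + 1 + r_g$. Therefore all the balls $B(\gamma\tilde{x}, r_g)$ with $\gamma \in \Gamma_r(x)$ lie inside the hyperbolic annulus $A_r := \{y \in \mathbb{H} : r - r_g < d(\tilde{x},y) < r+1+r_g\}$. Summing their (equal) volumes and using the classical formula $\mathrm{Vol}(B(\rho)) = 2\pi(\cosh \rho - 1)$ for a hyperbolic disk of radius $\rho$ yields
\[
\#\Gamma_r(x)\cdot 2\pi(\cosh r_g - 1) \;\leq\; \mathrm{Vol}(A_r) \;=\; 2\pi\bigl(\cosh(r+1+r_g)-\cosh(r-r_g)\bigr),
\]
which is exactly the first claimed inequality.

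For the second inequality I would apply the identity $\cosh A - \cosh B = 2\sinh\tfrac{A+B}{2}\sinh\tfrac{A-B}{2}$ with $A = r+1+r_g$ and $B = r - r_g$, rewriting the numerator as $2\sinh(r+\tfrac{1}{2})\sinh(r_g+\tfrac{1}{2})$, together with $\cosh r_g - 1 = 2\sinh^2(r_g/2)$. The elementary bounds $\sinh u \leq \tfrac{1}{2}e^u$ on the two factors in the numerator and $\sinh u \geq u$ on the denominator then give
\[
\frac{\cosh(r+1+r_g)-\cosh(r-r_g)}{\cosh r_g - 1} \;\leq\; \frac{2\cdot \tfrac{1}{2}e^{r+1/2}\cdot \tfrac{1}{2}e^{r_g+1/2}}{2(r_g/2)^2} \;=\; \frac{e^{r_g+1}}{r_g^{2}}\,e^{r}.
\]

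There is no substantial obstacle here: this is a classical Huber--Selberg style lattice-point count, and the only points requiring care are (i) applying $\mathrm{Inj}(X)\geq r_g$ to obtain \emph{disjoint} balls of radius $r_g$ (rather than $r_g/2$), and (ii) keeping track of constants so that the final exponential bound matches $e^{r_g+1}r_g^{-2}e^r$ exactly. This estimate will then be combined with the Gaussian heat-kernel bound $k(t,x,\gamma x)\lesssim t^{-1}e^{-d(x,\gamma x)^2/(8t)}$ and with $e^{-\int_\gamma \omega} \leq e^{c\, d(x,\gamma x)}$ to show that the geometric side of the trace formula \eqref{Selheat} is negligible as $g \to \infty$.
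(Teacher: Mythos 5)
Your proposal is correct and follows essentially the same argument as the paper: disjoint balls of radius $r_g$ around the orbit points (using $d(x,\gamma x)\geq 2\,\mathrm{Inj}(X)\geq 2r_g$), packed into the annulus $B(x,r+1+r_g)\setminus B(x,r-r_g)$, with the hyperbolic area formula giving the first bound. Your explicit derivation of the second inequality via $\cosh A-\cosh B=2\sinh\frac{A+B}{2}\sinh\frac{A-B}{2}$ and $\sinh u\geq u$ is a correct filling-in of a step the paper leaves implicit.
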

\begin{proof}
    For any $x \in \mathbb{H}$, since $\mathrm{Inj}_{x}(X)=\frac{1}{2}\min\limits_{\gamma \in \Gamma-\{ \mathrm{Id} \}}d(x,\gamma x)$, for any $X \in \mathcal{A}_{g}$ defined in \eqref{geoassump}, we find that for any $\gamma \in \Gamma-\{ \mathrm{Id} \},$ $$d(x,\gamma x) \geq 2\mathrm{Inj}(X) \geq 2r_{g}.$$
    It implies that:
    $$B(x,r_{g})\cap B(\gamma x, r_{g})=\emptyset, \ \forall \gamma \in \Gamma-\{ \mathrm{Id} \},$$
    and for any $y \in B(\gamma x, r_{g})$, $\gamma \in \Gamma_{r}(x)$,
    \begin{equation*}
      d(y,x)\leq d(y,\gamma x)+d(x,\gamma x)< r_{g}+r+1, \quad d(y,x)\geq d(x,\gamma x)-d(y,\gamma x)> r-r_{g}.
    \end{equation*}
    Then we find that
    \begin{equation*}
      \bigsqcup_{\gamma \in \Gamma_{r}}B(x,r_{g})\subset B(x,r+1+r_{g})\setminus B(x,r-r_{g}).
    \end{equation*}
    The area of $B(x,r)$ on the hyperbolic plane $\mathbb{H}$ is $2\pi(\cosh(r)-1)$, so we obtain that:
    $$
      \# \Gamma_{r}(x)\cdot 2\pi(\cosh(r_{g})-1) \leq 2\pi (\cosh(r+1+r_{g})-\cosh(r-r_{g})).
    $$
    It implies that
    \begin{equation*}
      \# \Gamma_{r}(x) \leq \frac{\cosh(r+1+r_{g})-\cosh(r-r_{g})}{\cosh(r_{g})-1}\leq \frac{e^{r_{g}+1}}{r_{g}^{2}}e^{r}.
    \end{equation*}
\end{proof}
Now we can divide $\Gamma$ into $\bigsqcup_{r=[2\mathrm{Inj}_{x}(X)]}^{\infty}\Gamma_{r}(x)$. Since $d(x,\gamma x)\geq 2\mathrm{Inj}_{x}(X)\geq 2r_{g}>0,$ we use \eqref{latticecount} to obtain that:
\begin{equation*}
  \begin{aligned}
  &\sum_{\gamma\in \Gamma-\{ \mathrm{Id} \}}e^{cd(x,\gamma x)-\frac{d(x,\gamma x)^2}{8t}} \leq \sum_{r=[2\mathrm{Inj}_{x}(X)]}^{\infty}\sum_{\gamma \in \Gamma_{r}(x)}e^{cd(x,\gamma x)-\frac{d(x,\gamma x)^2}{8t}}\\
  \leq &\sum_{r=[2\mathrm{Inj}_{x}(X)]}^{\infty}\frac{e^{r_{g}+1}}{r_{g}^{2}}e^{r}\cdot e^{cr-\frac{r^{2}}{8t}} \leq \frac{e^{r_{g}+1}}{r_{g}^{2}}e^{2t(c+1)^{2}}\sum_{r=[2\mathrm{Inj}_{x}(X)]}^{\infty}e^{-\frac{(r-4t(c+1))^{2}}{8t}}.
  \end{aligned}  
\end{equation*}
We take $R=\frac{1}{6}\log g>2t(c+1)$ and divide the domain $D$ of the integral in the geometric term in \eqref{Selheat} into $D(R):=\{x\in D: \mathrm{Inj}_{x}(X)<R\}$ and $D(R)^{\complement}$:
\begin{equation*}
  \frac{1}{\mathrm{Vol}(D)}\int_{D(R)}\sum_{\gamma\in \Gamma-\{ \mathrm{Id} \}}e^{-\int_{\gamma}\omega}k(t,x,\gamma x)d\Vol(x)\lesssim_{t,c} \frac{e^{r_{g}+1}}{r_{g}^{2}}\frac{\mathrm{Vol}(D(R))}{\mathrm{Vol}(D)},
\end{equation*}
and 
\begin{equation*}
  \frac{1}{\mathrm{Vol}(D)}\int_{D(R)^{\complement}}\sum_{\gamma\in \Gamma-\{ \mathrm{Id} \}}e^{-\int_{\gamma}\omega}k(t,x,\gamma x)d\Vol(x)\lesssim_{t,c} \frac{e^{r_{g}+1}}{r_{g}^{2}}e^{-\frac{(R-2t(c+1))^{2}}{2t}}.
\end{equation*}
Then we deduce that:
\begin{equation*}
  \begin{aligned}
  \frac{1}{\mathrm{Vol}(D)}\int_{D}\sum_{\gamma\in \Gamma-\{ \mathrm{Id} \}}e^{-\int_{\gamma}\omega}k(t,x,\gamma x)d\Vol(x) & \lesssim_{t,c}  \frac{1}{r_{g}^{2}} \left (\frac{\mathrm{Vol}(D(R))}{\mathrm{Vol}(D)}+e^{-\frac{(R-2t(c+1))^{2}}{2t}} \right )  \\
  & \lesssim_{t,c}  g^{-\frac{1}{4}}(\log g)^{-\frac{9}{8}} .
  \end{aligned}
\end{equation*}
By \eqref{Selheat}, for any $\log g> 12(c+1)t,$ 
\begin{equation}\label{heatker}
  \sup_{X \in \mathcal{A}_{g}} \left |\frac{1}{\mathrm{Vol}(X)}\sum_{j=0}^{\infty}e^{-t\lambda_{j}^{\omega}(X)}-\frac{1}{4\pi}\int_{\mathbb{R}}e^{-t(\frac{1}{4}+\rho^2)}\tanh(\pi\rho)\rho d\rho \right | \lesssim_{t,c}  g^{-\frac{1}{4}}(\log g)^{-\frac{9}{8}}.
\end{equation}
\subsection{Asymptotic distribution of the spectrum}
By taking the inner product with $\phi_{j}(x)$ in the $L^2(X)$ space on both sides of \eqref{equation}, we obtain the following:
\begin{equation*}
  (\Delta \phi_{j},\phi_{j})_{L^2}-2(\langle\omega,d\phi_{j}\rangle_{x},\phi_{j}(x))_{L^2}+(|\omega|_{x}^2\phi_{j},\phi_{j})_{L^2}+\Re \lambda^{\omega}_{j}(X) +i \Im \lambda^{\omega}_{j}(X)=0.
\end{equation*}
Since $\omega \in \mathcal{H}^1(X,\mathbb{R})$, then\footnote{Here we use the property of harmonic 1-form $\omega$: $(\omega,df)_{L^2}=(\delta \omega, f)_{L^2}=0$ for all $f\in C^{\infty}(X)$, where $\delta$ is the dual operator of $d$.}
\begin{equation*}
  \int_{X}\langle\omega,d\phi_{j}\rangle_{x}\overline{\phi_{j}(x)}d\Vol(x)+\overline{\int_{X}\langle\omega,d\phi_{j}\rangle_{x}\overline{\phi_{j}(x)}d\Vol(x)}=\int_{X}\langle\omega,d\left(|\phi_{j}|^2\right)\rangle_{x}d\Vol(x)=0.
\end{equation*}
It implies $(\langle\omega,d\phi_{j}\rangle_{x},\phi_{j}(x))_{L^2} \in i\mathbb{R}$. Then by comparing the real and imaginary parts on both sides, we have:
\begin{equation}\label{eq30}
  \begin{gathered}
    \Re \lambda^{\omega}_{j}(X)=\int_{X}(|d\phi_{j}|_{x}^2-|\omega|_{x}^2|\phi_{j}(x)|^2)d\Vol(x), \\
    \Im \lambda^{\omega}_{j}(X)=-2i\int_{X}\langle \omega,d\phi_{j} \rangle_{x}\overline{\phi_{j}(x)}d\Vol(x).\\
  \end{gathered}
\end{equation}
Here $||\phi_{j}||_{L^{2}(X)}=1$. By $||\omega||_{L^{\infty}}\leq c$, if $\Re \lambda^{\omega}_{j}(X) \in [a,b]$, then 
\begin{equation*}
|\Im \lambda^{\omega}_{j}(X)|\leq 2c\int_{X} |d\phi_{j}|_{x}|\phi_{j}(x)| d\Vol(x) \leq 2c \left ( \int_{X} |d\phi_{j}|_{x}^2d\Vol(x) \right )^{\frac{1}{2}}\leq 2c\sqrt{b+c^{2}}.
\end{equation*}
Take $$\Omega :=\left\{x+yi: x\geq a-1, \ |y|\leq 2c\sqrt{b+c^{2}}+1 \right\},$$ and define 
\begin{equation*}
C_{\infty}(\Omega):=\{f\in C(\Omega):\lim_{\Re z \to \infty}f(z)=0\}.
\end{equation*}
The Weierstrass approximation theorem implies that the closure of the algebra generated by $\{e^{-tz},z\in \Omega:\forall t>0\}$ is $C_\infty(\Omega)$. In other words, for any $f\in C_\infty(\Omega)$ and $\epsilon>0$, there exist $a_k\in\mathbb{C}, t_k>0$ , $k=1,\ldots,n$ such that $$\left | \left | f(z)-\sum_{k=1}^{n}a_{k}e^{-t_{k}z} \right | \right |_{L^{\infty}(\Omega)}<\epsilon.$$
We denote the space of compactly supported continuous functions on $\Omega$ by $C_{0}(\Omega)$,
\begin{theorem}\label{cptsupp}
    For any $f\in C_{0}(\Omega)\subset C_{\infty}(\Omega)$, we have 
    $$\lim_{g \to \infty}\sup_{X \in \mathcal{A}_{g}}\left |\frac{1}{\mathrm{Vol}(X)}\sum_{j=0}^{\infty} f(\lambda_{j}^{\omega}(X))-\frac{1}{4\pi}\int_{\mathbb{R}}f \left ( \frac{1}{4}+\rho^{2} \right ) \tanh(\pi \rho)\rho d\rho \right |=0.$$
\end{theorem}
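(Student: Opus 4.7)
The plan is to reduce, via the Stone–Weierstrass-type density stated just above, to the pure exponential case already settled by the heat-trace estimate \eqref{heatker}, and then to control the resulting approximation errors on both the spectral and the Plancherel side. Given $\epsilon>0$ and $f\in C_0(\Omega)$, I first apply the density of the algebra $\bigl\{\sum_{k} a_k e^{-t_k z} : t_k>0\bigr\}$ in $C_\infty(\Omega)$ to pick $h(z)=\sum_{k=1}^n a_k e^{-t_k z}$ with $||f-h||_{L^\infty(\Omega)}<\epsilon$. By linearity, \eqref{heatker} applied termwise gives
$$\sup_{X\in\mathcal{A}_g}\Bigl|\frac{1}{\mathrm{Vol}(X)}\sum_j h(\lambda_j^\omega(X))-\frac{1}{4\pi}\int_{\mathbb{R}} h\bigl(\tfrac14+\rho^2\bigr)\tanh(\pi\rho)\rho\,d\rho\Bigr|\longrightarrow 0 \quad\text{as } g\to\infty,$$
so by the triangle inequality it remains to bound
$$E_1(X):=\frac{1}{\mathrm{Vol}(X)}\Bigl|\sum_j(f-h)(\lambda_j^\omega(X))\Bigr| \quad\text{and}\quad E_2:=\Bigl|\frac{1}{4\pi}\int_{\mathbb{R}}(f-h)\bigl(\tfrac14+\rho^2\bigr)\tanh(\pi\rho)\rho\,d\rho\Bigr|$$
by a quantity tending to $0$ with $\epsilon$.

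For both errors I would split at a threshold $M$ much larger than $\beta:=\max\{\Re z:z\in\mathrm{supp}(f)\}$. On the bounded region $\{\Re z\leq M\}$ the uniform pointwise bound $|f-h|\leq\epsilon$ combined with a uniform Weyl-type upper bound $\#\{j:\Re\lambda_j^\omega(X)\leq M\}\lesssim_c M\,\mathrm{Vol}(X)$, together with the analogous Plancherel estimate $\int_{\{1/4+\rho^2\leq M\}}\tanh(\pi\rho)|\rho|\,d\rho\lesssim M$, gives an error of size $O(\epsilon M)$. On the tail $\{\Re z>M\}$ the function $f$ vanishes, so only $h$ contributes, and the pointwise decay $|h(z)|\leq\sum_k|a_k|e^{-t_k\Re z}$ together with the same Weyl-type count gives a tail contribution that decays exponentially in $M$. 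Balancing with $M\sim\log(1/\epsilon)$ then makes both errors $O(\epsilon\log(1/\epsilon))$, which vanishes as $\epsilon\to 0$; fixing $\epsilon$ small and then sending $g\to\infty$ completes the proof.

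The main obstacle is the uniform Weyl-type upper bound $\#\{j:\Re\lambda_j^\omega(X)\leq M\}\lesssim_c M\,\mathrm{Vol}(X)$, which is not immediate from \eqref{heatker} because the complex heat trace may involve cancelling phases. I would obtain it from the singular-value majorisation $\sum_j e^{-t\Re\lambda_j^\omega(X)}\leq||e^{-t\Delta_\omega/2}||_{\mathrm{HS}}^2$ combined with an explicit estimate of the twisted heat kernel against the usual hyperbolic one, using $|e^{-\int_\gamma\omega}|\leq e^{||\omega||_{L^\infty}\ell(\gamma)}$ with $||\omega||_{L^\infty}\leq c$; alternatively, via a min–max comparison with Monk's uniform Weyl law for $-\Delta$ (Theorem \ref{uniweyl}) through the identity $\Re\lambda_j^\omega(X)=\int_X\bigl(|d\phi_j|^2-|\omega|^2|\phi_j|^2\bigr)d\mathrm{Vol}$.
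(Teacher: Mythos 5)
Your overall skeleton---reduce to the exponential case settled by \eqref{heatker} via the density of the algebra $\{\sum_k a_k e^{-t_k z}\}$---matches the paper's, but the way you handle the approximation error on the spectral side is genuinely different and contains a gap. The paper does not approximate $f$ itself: it sets $h=fe^{z}$, approximates $h$ by $h_\epsilon=\sum_k a_ke^{-t_kz}$, and compares $f$ with $h_\epsilon e^{-z}$. The pointwise error is then $\leq\epsilon\, e^{-\Re z}$, so the spectral-side error is bounded by $\epsilon$ times the absolutely convergent heat trace $\frac{1}{\mathrm{Vol}(X)}\sum_j e^{-\Re\lambda_j^{\omega}(X)}$, which is already controlled by \eqref{heatker}. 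This built-in damping is precisely what makes any threshold $M$ and any separate Weyl-type upper bound unnecessary.

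In your version the error $|f-h|\leq\epsilon$ carries no decay, and the repair you propose---split at $M$ and balance $M\sim\log(1/\epsilon)$---does not close. The tail estimate $\sum_{\Re\lambda_j>M}|h(\lambda_j)|\lesssim \mathrm{Vol}(X)\sum_k|a_k|\,(M+t_k^{-1})\,t_k^{-1}e^{-t_kM}$ has a prefactor $\sum_k|a_k|$ and a decay rate $\min_k t_k$ that depend on the approximant $h_\epsilon$, hence on $\epsilon$, and Stone--Weierstrass gives no control over them: as $\epsilon\to0$ the exponents may degenerate and the coefficients may blow up. Consequently $M$ must be chosen \emph{after} $h_\epsilon$, and the bounded-region error $O(\epsilon M(\epsilon))$ need not tend to $0$; the claimed $O(\epsilon\log(1/\epsilon))$ bound with a constant independent of $\epsilon$ is not justified. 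Separately, of your two routes to $\#\{j:\Re\lambda_j^{\omega}(X)\leq M\}\lesssim_c M\,\mathrm{Vol}(X)$, the singular-value (Weyl inequality) route is viable but amounts to redoing the heat-kernel estimate, while the ``min--max comparison'' is not available: $\Delta_\omega$ is not self-adjoint, its eigenfunctions are not orthogonal, and the identity for $\Re\lambda_j^\omega$ gives no variational characterization. Adopting the paper's $e^{z}$-weighting removes both difficulties at once.
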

\begin{proof}
    The method is the same as \cite[Theorem 9]{LS}. Let $h(z)=f(z)e^{z}\in C_{0}(\Omega)$ then for any $\epsilon>0$ we can choose $a_{k}\in \mathbb{C}, \ t_{k}>0, \ k=1,\cdots,m$ and such that
    \begin{equation}\label{appro}
    ||h(z)-h_{\epsilon}(z)||_{L^{\infty}(\Omega)}<\epsilon, \quad h_{\epsilon}(z)=\sum_{k=1}^{m}a_{k}e^{-t_{k}z},
    \end{equation}
    and define 
    \begin{equation*}
    S_{X}(f)=\frac{1}{\mathrm{Vol}(X)}\sum_{j=0}^{\infty}f(\lambda^{\omega}_{j}(X)), \quad
    I(f):=\frac{1}{4\pi}\int_{\mathbb{R}}f \left ( \frac{1}{4}+\rho^{2} \right ) \tanh(\pi \rho)\rho d\rho.
    \end{equation*}
    Thus we have
    \begin{equation*} 
    |S_{X}(f)-I(f)|\leq |S_{X}(f)-S_{X}(h_{\epsilon}e^{-z})|+|S_{X}(h_{\epsilon}e^{-z})-I(h_{\epsilon}e^{-z})|+|I(h_{\epsilon}e^{-z})-I(f)|.
    \end{equation*}
    By \eqref{appro}, the first term 
    \begin{equation*}
    |S_{X}(f)-S_{X}(h_{\epsilon}e^{-z})|\leq ||h-h_{\epsilon}||_{L^{\infty}(\Omega)}S_{X}(e^{-z})<\epsilon S_{X}(e^{-z}).
    \end{equation*}
    By \eqref{heatker}, if $ X \in \mathcal{A}_{g}$ and $\log g>12(c+1) \sum_{i=1}^{m}t_{i},$ then the second term
    \begin{equation*}
      \begin{aligned}
    |S_{X}(h_{\epsilon}e^{-z})-I(h_{\epsilon}e^{-z})| &  \leq  \sum_{k=1}^{m}|a_{k}|\cdot|S_{X}(e^{-(t_{k}+1)z})-I(e^{-(t_{k}+1)z})| \\ 
     & \lesssim_{t_1,\cdots,t_{m},c}\left ( \sum_{k=1}^{m}|a_{k}| \right ) g^{-\frac{1}{4}}(\log g)^{-\frac{9}{8}}.
      \end{aligned}
    \end{equation*}
     Now we take $g \to \infty$, by \eqref{heatker}:
    \begin{equation*}
    \limsup_{g \to \infty}\sup_{X \in \mathcal{A}_{g}}|S_{X}(f)-I(f)| \leq \epsilon I(e^{-z})+|I(h_{\epsilon}e^{-z})-I(f)|.
    \end{equation*}
    Let $\epsilon \to 0$, and then we finish the proof by dominated convergence theorem the right-hand side tends to $0$. 
  \end{proof}
    Finally, we give the proof of Theorem \ref{infform}, we define $f(\lambda)=\mathbf{1}_{\Omega_{a,b}}(\lambda),$ where $\Omega_{a,b}$ is defined by $\{\lambda \in \Omega: \Re \lambda \in [a,b]\}.$
    It follows that $$S_{X}(f)=\frac{N_{X}^{-\Delta_{\omega}}(\Re \lambda \in [a,b])}{\mathrm{Vol}(X)}, \quad I(f)=\frac{1}{4\pi}\int_{\frac{1}{4}}^{\infty}\mathbf{1}_{[a,b]}(\lambda)\tanh \left ( \pi\sqrt{\lambda-\frac{1}{4}} \right )d\lambda.$$
    Now we construct two sequence $\underline{f_{n}}, \overline{f_{n}} \in C_{0}^{\infty}(\Omega)$ such that $0\leq \underline{f_{n}}\leq f \leq \overline{f_{n}} \leq 1$ and $$\lim_{n \to \infty}\underline{f_{n}}=\lim_{n \to \infty}\overline{f_{n}}=f.$$
    Then we find 
    $$S_{X}(\underline{f_{n}})\leq S_{X}(f)\leq S_{X}(\overline{f_{n}})$$
    Now we consider 
    $$S_{X}(f)-I(f)\leq S_{X}(\overline{f_{n}})-I(\overline{f_{n}})+I(\overline{f_{n}})-I(f)\leq \left | S_{X}(\overline{f_{n}})-I(\overline{f_{n}}) \right |+\left | I(\overline{f_{n}})-I(f) \right |,$$
    and 
    $$S_{X}(f)-I(f)\geq S_{X}(\underline{f_{n}})-I(\underline{f_{n}})+I(\underline{f_{n}})-I(f)\geq - \left | S_{X}(\underline{f_{n}})-I(\underline{f_{n}}) \right |-\left | I(\underline{f_{n}})-I(f) \right |.$$
    It implies that 
    $$\left | S_{X}(f)-I(f) \right | \leq  \left | S_{X}(\overline{f_{n}})-I(\overline{f_{n}}) \right |+\left | S_{X}(\underline{f_{n}})-I(\underline{f_{n}}) \right |+ \left | I(\overline{f_{n}})-I(f) \right |+\left | I(\underline{f_{n}})-I(f) \right |.$$
    Then we take the supremum of $X\in \mathcal{A}_{g}$, let $g\to \infty$ then $n \to \infty,$ we obtain \eqref{inftyre} by Theorem \ref{cptsupp}.\\
    \indent To prove \eqref{inftyim}, we consider $\Omega_{a, b, \epsilon}$ which is defined by $\{\lambda \in \Omega: \Re \lambda \in [a,b], \ |\Im \lambda| \geq \epsilon\}$ and $f(\lambda)=\mathbf{1}_{\Omega_{a, b, \epsilon}}(\lambda),$ then $$S_{X}(f)=\frac{N_{X}^{-\Delta_{\omega}}(\Re \lambda \in I, \Im \lambda \notin (-\epsilon,\epsilon))}{\mathrm{Vol}(X)}, \quad I(f)=0.$$
    The same argument above gives \eqref{inftyim}.
    \section{The uniform estimate of the remainder term}\label{sec3}
We obtain the main term of the Weyl law of twisted Laplacian in Theorem \ref{infform}. Therefore, we investigate the remainder term of \eqref{inftyre}. As in \cite{SJ1, SJ2, AN2}, we reformulate the spectral problem by semiclassical microlocal analysis, see \cite{DS, ZM, DZ19}.\\
 \indent We introduce the semiclassical parameter $h$ with $0<h\leq 1$. For an $\omega \in \mathcal{H}^{1}(X,\mathbb{R})$, let $P=-h^{2}\Delta$, $P_{\omega}=-h^{2}\Delta_{\omega}$ and $Q_{\omega}=\langle -2ih\omega, d(\cdot) \rangle$, then 
\begin{equation*}
P_{\omega}=P+ihQ_{\omega}-h^{2}|\omega|_{x}^{2}.
\end{equation*}
In this section, we assume that the upper bound of $||\omega||_{L^\infty}$ is a bounded function $c(g)$ of the genus $g$, then we prove the following theorem:
    \begin{theorem}\label{semiweyltwi}
      There exists a universal constant $C_{0}^{\prime}>0$, such that for any $c>0$, any function $c(g)$ satisfying $0 \leq c(g) \leq c$ for all $g \in \mathbb{N}^{+}$, any $g, h$ satisfying
      \begin{equation*}\label{eq35}
       0<h \leq 1, \quad c(g)h<10^{-4}, \quad g>g_{0}, 
      \end{equation*}
      where $g_0$ is defined in Theorem \ref{uniweyl}, any interval $I=[a,b]$ satisfying
      \begin{equation}\label{eq33}
      -2\leq a \leq b \leq 1, \quad b\geq 0, \quad b-a\geq 8000c(g)h,
      \end{equation}
      and any $X \in \mathcal{A}_{g}$, we have:
      \begin{equation}
        \sup _{\substack{\omega \in \mathcal{H}^1(X, \mathbb{R}) \\\|\omega\|_{L^{\infty}} \leqslant c(g)}}\left | N^{P_{\omega}}_{X}(\Re \lambda \in I)-N_{X}^{P}(\lambda \in I) \right | \leq C_{0}^{\prime}\mathrm{Vol}(X)\left ( \sqrt{\frac{1}{\log g}}+c(g) \right)h^{-1}.
      \end{equation}
    \end{theorem} 
    When $c(g)=o(1)$ as $g \to \infty$, we obtain the remainder term of Weyl law of twisted Laplacian for $X\in \mathcal{A}_g$ by combining Theorem \ref{semiweyltwi} with Theorem \ref{uniweyl}. \\
    \indent For any $c>0$, any $c(g)$ satisfying $0 \leq c(g) \leq c$ for all $g \in \mathbb{N}^{+}$, and any interval $I=[a,b]$ which satisfies \eqref{eq33}, let $\delta=4000c(g)h$, ensuring that $b-a \geq 2\delta$. Then we define
    \begin{equation}
    I_{\delta}:=\partial I+(-\delta,\delta)=(a-\delta,a+\delta)\cup (b-\delta,b+\delta),
    \end{equation}
    which represents the $\delta$-neighborhood of $\partial I=\{a,b\}$. For any $X\in \mathcal{M}_g$, we denote $N^{P}_{X}(\lambda \in I_{\delta})$ as the count of eigenvalues for $P$ within the $I_{\delta}$.\\
    \indent In fact, Theorem \ref{semiweyltwi} is a corollary of the following theorem, which shows that the difference between the number of eigenvalues for Laplacian and twisted Laplacian is controlled by the count of eigenvalues for $P$ within the $I_{\delta}$, $\delta$-neighborhood of $\partial I$.
    \begin{theorem}\label{perturoflaplacian}
    There exists a universal constant $C_{0}^{\prime\prime}>0$, such that for any $c>0$, any function $c(g)$ satisfying $0 \leq c(g) \leq c$ for all $g \in \mathbb{N}^{+}$, any $g, h$ satisfying 
    \begin{equation}\label{eq46}
    0<h\leq 1, \ c(g)h<10^{-4},
    \end{equation}
    any interval $I=[a,b]$ which satisfies \eqref{eq33}, and any $X \in \mathcal{M}_g$, we have:\footnote{We note that this theorem does not need $X\in \mathcal{A}_{g}$, we use $X \in \mathcal{A}_{g}$ to estimate $N^{P}_{X}(\lambda \in I_{\delta})$.}
    \begin{equation}
      \sup _{\substack{\omega \in \mathcal{H}^1(X, \mathbb{R}) \\\|\omega\|_{L^{\infty}} \leqslant c(g)}}\left | N^{P_{\omega}}_{X}(\Re \lambda \in I)-N_{X}^{P}(\lambda \in I) \right |\leq C_{0}^{\prime\prime} N^{P}_{X}(\lambda \in I_{\delta}),
    \end{equation}
    where $I_{\delta}$ is defined as above.
    \end{theorem}
    \begin{remark}
      For the proof of the main Theorem \ref{uniweylfortwist}, we will choose $c(g)=cg^{-\frac{1}{48}}$ by Theorem \ref{deh}. Our proof of Theorem \ref{perturoflaplacian} follows the proof of Sjöstrand \cite[Theorem 14.1.1]{SJ2}, but we must be careful about the dependence on the constants.
    \end{remark}
    In the following, we will prove Theorem \ref{perturoflaplacian} from the subsection \ref{SS1} to \ref{SS4} and prove Theorem \ref{semiweyltwi} in the subsection \ref{SS5}. In what follows, we choose an arbitary $c>0$ and an arbitary function $c(g)$ with $0 \leq c(g) \leq c$ for all $g \in \mathbb{N}^{+}$ and always assume that $g,h$ satisfy \eqref{eq46} and the interval $I=[a,b]$ satisfies \eqref{eq33}.
    \subsection{Perturbation of the operator}\label{SS1}
    In this subsection, we construct several perturbations of $P$. We define $\delta$-perturbation $P^{\delta}$ as the \cite[(14.1.6)]{SJ1}, where $\delta=4000c(g)h$. Let $\{\psi_{i}\}_{i=0}^{\infty}$ be the orthogonal normal basis of $L^{2}(X)$ such that $$P\psi_{i}=\lambda_{i}\psi_{i}, \ \text{with} \ 0=\lambda_{0}<\lambda_{1}\leq \lambda_{2}\leq \cdots.$$
    We define
    \begin{equation}\label{perop}
    \tilde{\lambda}_{j}=\left\{\begin{array}{l}
      a-\delta \text { when } \lambda_{j} \in (a-\delta,a) \\
      a+\delta \text { when } \lambda_{j} \in [a,a+\delta) \\
      b-\delta \text { when } \lambda_{j} \in (b-\delta,b] \\
      b+\delta \text { when } \lambda_{j} \in (b,b+\delta) \\
      \lambda_{j} \text { when } \lambda_{j} \in \mathbb{R} \backslash I_{\delta}
      \end{array}\right.
    \end{equation}
    and 
    \begin{equation*}
      P^{\delta}u=\sum_{j=0}^{\infty} \widetilde{\lambda}_{j} ( u,\psi_{j} )_{L^{2}} \psi_{j}.
    \end{equation*}
    By the construction of $P^{\delta}$, the operator norm $||P^{\delta}-P||\leq \delta$. Thus we have: 
      \begin{equation} \label{eq1}
        ||P^{\delta}-P||_{\mathrm{tr}}\leq N^{P}_{X}(\lambda \in I_{\delta}) \delta,
      \end{equation}
      and 
      \begin{equation} \label{eq2}
        N^{P^{\delta}}_{X}(\lambda \in I)=N^{P}_{X}(\lambda \in I).
      \end{equation}
    For any $\omega \in \mathcal{H}^{1}(X,\mathbb{R})$ with $||\omega||_{L^{\infty}}\leq c(g)$, we introduce
    \begin{equation*}
    P^{\delta}_{\omega}=P^{\delta}+ihQ_{\omega}-h^{2}|\omega|_{x}^{2}.
    \end{equation*}
    If some $u \in C^{\infty}(X)$ satisfies
    $$P^{\delta}_{\omega}u=z u, \quad ||u||_{L^{2}(X)}=1,$$
    similar to \eqref{eq30}, we have:
    $$\Re z =(P^{\delta}u,u)_{L^2}-h^{2}(|\omega|_{x}^{2}u,u)_{L^2}, \quad \Im z=h(Q_{\omega}u,u)_{L^2}.$$
    So we obtain 
         $$\Re z\geq (Pu,u)_{L^2}+((P^{\delta}-P)u,u)_{L^2}-c(g)^{2}h^{2} \geq \int_{X}|h\nabla u|^{2}d\Vol(x) -\delta -c(g)^{2}h^{2},$$
         $$|\Im z|\leq 2h||\omega||_{L^{\infty}} \left ( \int_{X}|h\nabla u|^{2}d\Vol(x) \right )^{\frac{1}{2}} \leq 2c(g)h\sqrt{\Re z+\delta+c(g)^{2}h^{2}}.$$
    For $c(g)h<10^{-4}$, then $\delta+c(g)^{2}h^{2}<1$, 
    \begin{equation}\label{imofspec}
    z \in \sigma(P^{\delta}_{\omega}), \ \Re z \in I \implies |\Im z| \leq 4c(g)h.
    \end{equation}
    We find the region that contains no eigenvalue of $P^{\delta}_{\omega}$ in the following lemma:
    \begin{lemma}
      We take $\epsilon=20c(g)h=\frac{\delta}{200},$ then for any $d(z,\sigma(P^{\delta}))>\epsilon$, $\Re z \in [-3,3]$ and $|\Im z| \leq 2$,
      \begin{equation}\label{eq3}
        \left\|(z-P_{\omega}^{\delta})^{-1}\right\|< 2\epsilon^{-1}.
      \end{equation}
      As corollary,
      $$\sigma(P^{\delta}_{\omega})\cap \{ z: \Re z \in \partial I+(-\delta+\epsilon, \delta-\epsilon) \}=\emptyset.$$
    \end{lemma}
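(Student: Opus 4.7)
The plan is to invert $z - P^{\delta}_{\omega}$ by a Neumann series expansion around the self-adjoint operator $P^{\delta}$. Writing $P^{\delta}_{\omega} = P^{\delta} + B$ with $B := ihQ_{\omega} - h^{2}|\omega|_{x}^{2}$, and using that $P^{\delta}$ is self-adjoint with real spectrum so that the hypothesis $d(z,\sigma(P^{\delta}))>\epsilon$ yields $||(z-P^{\delta})^{-1}||_{L^{2}\to L^{2}}< 1/\epsilon$, the entire argument will reduce to showing
$$||B(z - P^{\delta})^{-1}||_{L^{2} \to L^{2}} < 1/2.$$
Once this is in hand, the factorization $(z - P^{\delta}_{\omega})^{-1} = (z - P^{\delta})^{-1}(I - B(z - P^{\delta})^{-1})^{-1}$ combined with a geometric series gives the target strict bound $||(z - P^{\delta}_{\omega})^{-1}|| < 2\epsilon^{-1}$.

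To bound $||B(z - P^{\delta})^{-1}u||$ for $u \in L^{2}(X)$, I will set $v := (z - P^{\delta})^{-1}u$, so that $||v|| \leq ||u||/\epsilon$ and $(z - P^{\delta})v = u$. The zeroth-order contribution $||h^{2}|\omega|_{x}^{2} v|| \leq h^{2}c(g)^{2}||v|| \leq (hc(g)/20)\,||u||$ is negligible once $\epsilon = 20c(g)h$ is plugged in and $c(g)h < 10^{-4}$ is used. The delicate term is the first-order one: $||ihQ_{\omega}v|| \leq 2hc(g)\,||h\nabla v||$, so the heart of the proof is a bound on $||h\nabla v||$. Here I will combine $||h\nabla v||^{2} = (Pv,v)$ with the perturbation bound $||P - P^{\delta}|| \leq \delta$ (inherited from the construction \eqref{perop}) and with the identity $(P^{\delta}v, v) = \Re z \cdot ||v||^{2} - \Re(u,v)$ (which follows from $P^{\delta}v = zv - u$ together with self-adjointness of $P^{\delta}$) to derive
$$||h\nabla v||^{2} \leq (|\Re z| + \delta)\,||v||^{2} + ||u||\cdot||v|| \leq \frac{C_{0}}{\epsilon^{2}}\,||u||^{2}$$
for a small absolute constant $C_{0}$, using $|\Re z|\leq 3$, $\delta < 1$, and $\epsilon < 1$. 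Substituting back, $||ihQ_{\omega}v|| \leq 2hc(g)\sqrt{C_{0}}/\epsilon \cdot ||u|| = (\sqrt{C_{0}}/10)\,||u||$, which stays safely below $1/2$ provided $C_{0}\leq 5$.

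For the corollary, the definition \eqref{perop} ensures that $\sigma(P^{\delta})$ is disjoint from the open intervals $(a-\delta, a+\delta)$ and $(b-\delta, b+\delta)$, so any $z$ with $\Re z \in \partial I + (-\delta + \epsilon, \delta - \epsilon)$ satisfies $d(\Re z, \sigma(P^{\delta})) > \epsilon$; since $\sigma(P^{\delta}) \subset \mathbb{R}$, this forces $d(z, \sigma(P^{\delta})) > \epsilon$ for every $\Im z$. The main resolvent bound then applies throughout the horizontal slab $|\Im z| \leq 2$, and for $|\Im z| > 2$ an extension of the numerical-range argument leading to \eqref{imofspec} rules out eigenvalues directly. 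The principal obstacle I anticipate is the tracking of absolute constants: the target bound $2\epsilon^{-1}$ is strict and the choice $\epsilon = 20c(g)h$ is tight, so each coefficient entering the estimate of $||h\nabla v||$ must be made explicit and small enough that $||B(z-P^{\delta})^{-1}|| < 1/2$ holds with a definite margin rather than only in the limit.
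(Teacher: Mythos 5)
Your proposal is correct and follows the same perturbative skeleton as the paper (Neumann series around the self-adjoint $P^{\delta}$, reducing everything to $\|B(z-P^{\delta})^{-1}\|<\tfrac12$), but the crucial sub-estimate is handled by a genuinely different device. The paper controls the first-order term by the operator factorization $Q_{\omega}(z-P^{\delta})^{-1}=Q_{\omega}(-\Id-P)^{-1}\cdot(-\Id-P)(z-P^{\delta})^{-1}$, bounding the first factor by $2c(g)$ via the form inequality $\|Q_{\omega}f\|^{2}\leq 4c(g)^{2}\|(\Id+P)^{1/2}f\|^{2}$ and the second by $1+|z+1|\epsilon^{-1}+\delta\epsilon^{-1}$. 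You instead estimate the Dirichlet energy of $v=(z-P^{\delta})^{-1}u$ directly from the quadratic-form identity $(P^{\delta}v,v)=\Re z\,\|v\|^{2}-\Re(u,v)$ plus $\|P-P^{\delta}\|\leq\delta$, getting $\|h\nabla v\|^{2}\leq C_{0}\epsilon^{-2}\|u\|^{2}$ with $C_{0}\leq 3.5$ and hence $\|ihQ_{\omega}v\|\leq(\sqrt{C_{0}}/10)\|u\|$. The two routes yield comparable margins (roughly $0.19$ versus the paper's $\approx 0.45$ for $\|K\|$), and both are tight enough for the strict bound $2\epsilon^{-1}$; yours is arguably cleaner since it avoids the slightly awkward intermediate resolvent $(-\Id-P)^{-1}$. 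You also explicitly patch a point the paper leaves implicit: the resolvent bound is only stated for $|\Im z|\leq 2$, so the corollary (which imposes no restriction on $\Im z$) needs the a priori localization of $\sigma(P^{\delta}_{\omega})$ near the real axis from the numerical-range computation preceding \eqref{imofspec}; your appeal to that estimate closes the gap correctly.
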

    \begin{proof}
    We notice that 
    $$(z-P^{\delta}_{\omega})(z-P^{\delta})^{-1}=\Id-K,$$
    where $$K=ihQ_{\omega}(z-P^{\delta})^{-1}-h^{2}|\omega|_{x}^{2}(z-P^{\delta})^{-1}.$$
    Here we write
    $$Q_{\omega}(z-P^{\delta})^{-1}=Q_{\omega}(-\Id-P)^{-1}(-\Id-P)(z-P^{\delta})^{-1},$$
    where
    \begin{equation*}
      \begin{aligned}
    ||(-\Id-P)(z-P^{\delta})^{-1}|| & =||(z-P^{\delta}-(1+z)+P^{\delta}-P)(z-P^{\delta})^{-1}|| \\ 
                                  & <1+|z+1|\epsilon^{-1}+\delta\epsilon^{-1}.
      \end{aligned}
    \end{equation*}
    Now we estimate $Q_{\omega}(\Id+P)^{-1},$ for any smooth $f$, $$||Q_{\omega}f||_{L^{2}(X)}^{2}\leq 4c(g)^{2}\int_{X}|h\nabla f|^{2}d\Vol(x)\leq 4c(g)^{2}||(\Id+P)^{\frac{1}{2}}f||^{2}_{L^{2}(X)}.$$
    Then for any $u\in C^{\infty}(X),$
    \begin{equation*}
      \begin{aligned}
    ||Q_{\omega}(\Id+P)^{-1}u||_{L^{2}(X)} & \leq 2c(g)||(\Id+P)^{-\frac{1}{2}}u||_{L^{2}(X)}=2c(g) \left ( \sum_{i=0}^{\infty}\frac{(u,\psi_{i})^{2}}{1+\lambda_{i}} \right )^{\frac{1}{2}}\\
    & \leq 2c(g)||u||_{L^{2}(X)}.
      \end{aligned}
    \end{equation*}
    So we prove that 
    \begin{equation}\label{eq11}
    ||Q_{\omega}(\Id+P)^{-1}||\leq 2c(g).
    \end{equation}
    Then $$||K||\leq h\cdot 2c(g) \cdot (1+|z+1|\epsilon^{-1}+\delta\epsilon^{-1})+c(g)^{2}h^{2}\epsilon^{-1}.$$
    For $c(g)h<10^{-4}$, since $|z+1|\leq \sqrt{4^{2}+2^{2}}=3\sqrt{2},$ for $z\in [-3,3]+i[-2,2]$,
    $$||K||\leq \frac{201}{8000}+\frac{3\sqrt{2}}{10}+\frac{1}{20} \cdot \frac{1}{8000}<\frac{1}{2}.$$
    By Neumann series, $\Id-K$ is invertible, so $z-P^{\delta}_{\omega}$ is also invertible,
    $$(z-P_{\omega}^{\delta})^{-1}=(z-P^{\delta})^{-1}(\Id-K)^{-1},$$
    then
    $$
      ||(z-P_{\omega}^{\delta})^{-1}||\leq \frac{||(z-P^{\delta})^{-1}||}{1-||K||}< 2\epsilon^{-1}.
    $$
  \end{proof}
  By a deformation argument: consider $P^{\delta}_{t\omega}$ for $t\in [0,1]$, let $\Omega:=I+i\left [-\frac{b-a}{2},\frac{b-a}{2} \right ]$, where $\frac{b-a}{2}\geq \delta=200\epsilon,$ then for all $t\in[0,1]$, $$\partial \Omega \subset (\sigma(P^{\delta})+\overline{D(0,\epsilon)})^{\complement} \subset \sigma(P^{\delta}_{t\omega})^{\complement}.$$
  We note $$N^{P^{\delta}_{t\omega}}_{X}(\lambda \in \Omega)=\mathrm{Tr}\frac{1}{2\pi i}\int_{\partial\Omega}(z-P^{\delta}_{t\omega})^{-1}dz,$$
  The right side is continuous for $t$, thus,
      \begin{equation}\label{eq4}
        N^{P^{\delta}_{\omega}}_{X}(\Re \lambda \in I)=N^{P_{\delta}}_{X}(\lambda \in I)=N^{P}_{X}(\lambda \in I).
      \end{equation}
    \subsection{Estimate the number of zeros of holomorphic functions}\label{SS2}
    In this subsection, we introduce a crucial theorem \cite[Theorem 12.1.2]{SJ2} about counting the zeros of holomorphic functions. Let $\Omega \Subset \mathbb{C}$ be an open set and $\gamma=\partial \Omega$. Let $r \colon \gamma \mapsto (0,\infty)$ be a Lipschitz function of Lipschitz modulus no more than $\frac{1}{2}$:
    \begin{equation}\label{eq25}
    |r(x)-r(y)|\leq \frac{1}{2}|x-y|, \ \forall x,y \in \gamma.
    \end{equation}
    We further assume that $\gamma$ is Lipschitz in the following precise sense, we say that $r$ is a Lipschitz weight for $\gamma$ if There exists a constant $C_1$ such that for every $x \in \gamma$ there exist new affine coordinates $\widetilde{y}=(\widetilde{y}_{1},\widetilde{y}_{2})$ of the form $\widetilde{y}=U(y-x)$, $y \in \mathbb{C}\cong \mathbb{R}^2$ being the old coordinates, where $U=U_{x}$ is orthogonal, such that the intersection of $\Omega$ and the rectangle 
    $$R_{x}:=\{y \in \mathbb{C}: |\widetilde{y}_{1}|<r(x), \ |\widetilde{y}_{2}|<C_{1}r(x)\}$$
    takes the form 
    \begin{equation}\label{eq12}
      \{y \in R_{x}: \widetilde{y}_{2}>f_{x}(\widetilde{y}_{1}), \ |\widetilde{y}_{1}|<r(x)\},
    \end{equation}
    where $f_{x}(\widetilde{y}_{1} )$ is Lipschitz on $[-r(x),r(x)]$, and $|f_{x}(t)-f_{x}(s)|\leq C_{1}|t-s|$. Notice that our assumption \eqref{eq12} remains valid if we decrease $r$.
    \begin{theorem}[Sjöstrand, \cite{SJ2}]\label{holodistri}
    Let $\Omega \Subset \mathbb{C}$ be an open simply connected set and have Lipschitz boundary $\gamma=\partial \Omega$ with an associated Lipschitz weight $r$ satisfies our previous assumption \eqref{eq25} and \eqref{eq12}. Let $z_{j}^{0} \in \gamma, j \in \mathbb{Z} / N \mathbb{Z}$ be distributed along the boundary in the positively oriented sense such that 
    \begin{equation}
    \frac{r(z_{j}^{0})}{4} \leq |z^{0}_{j+1}-z^{0}_{j}|\leq \frac{r(z_{j}^{0})}{2}.
    \end{equation}
    Then there exists a positive constant $C_{1}^{0}$, depending only on the constant $C_{1}$ in our assumption, such that every $C_{2}\geq C_{1}^{0}$, there exists a constant $C_{3}>0$ such that we have the following: \\
    Let $0<h \leq 1$ and let $\phi$ be a continuous function defined on some neighborhood of the closure of $W:=\bigcup_{z \in \gamma}D(z,r(z))$ such that $$\mu:= \Delta \phi(z)dA(z)$$ is a finite Radon measure there and denote by the same symbol a distribution extension to $\Omega \cup W$. Then there exist 
    $\widetilde{z}_{j} \in D\left (z_{j}^{0},\frac{r(z_{j}^{0})}{2C_{2}}\right )$ such that if $u$ is a holomorphic function on $\Omega \cup W$ satisfying:
    \begin{equation}
    h \log |u(z)|\leq \phi(z) , \quad \forall z \in W,
    \end{equation}
    \begin{equation}
    h \log |u(\widetilde{z}_{j})| \geq \phi(\widetilde{z}_{j})-\epsilon_{j}, \quad \forall j\in \mathbb{Z} / N \mathbb{Z}.
    \end{equation}
    Then 
    \begin{equation}
    \left |\#(u^{-1}(0)\cap \Omega)-\frac{1}{2\pi h}\mu(\Omega) \right |\leq \frac{C_{3}}{h} \left ( |\mu|(W)+\sum_{j \in \mathbb{Z} / N \mathbb{Z}}\epsilon_{j} \right ).
    \end{equation}
    \end{theorem}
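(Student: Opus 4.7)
The natural plan is to express the number of zeros via the Riesz representation: since $u$ is holomorphic on $\Omega\cup W$, the function $\psi := h\log|u|$ is subharmonic there, and its distributional Laplacian equals $2\pi h$ times the zero-counting measure of $u$. Hence
\begin{equation*}
\#\bigl(u^{-1}(0)\cap\Omega\bigr) \;=\; \frac{1}{2\pi h}\int_\Omega \Delta\psi,
\end{equation*}
interpreted distributionally, and the goal becomes to show that $\int_\Omega \Delta(\psi-\phi)$ is small, i.e.\ that $\psi$ and $\phi$ have nearly equal distributional Laplacians inside $\Omega$, with error controlled by $|\mu|(W)$ and the $\epsilon_j$.

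To turn this into rigorous estimates, I would replace $\mathbf{1}_\Omega$ by a smooth cutoff $\chi$ equal to $1$ well inside $\Omega$, supported in $\Omega\cup W$, and varying on scale $r$ across $\gamma$, constructed from a partition of unity subordinate to the boundary disks $D_j := D(z_j^0, r(z_j^0))$, so that $|\Delta\chi|\lesssim r(z_j^0)^{-2}$ on $D_j$. Green's formula gives
\begin{equation*}
\int \chi\,\Delta(\psi-\phi)\,dA \;=\; \int (\psi-\phi)\,\Delta\chi\,dA,
\end{equation*}
and it suffices to control $\int_{D_j}(\psi-\phi)\,\Delta\chi\,dA$ for each $j$. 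The hypothesis $h\log|u|\leq\phi$ on $W$ gives $\psi-\phi\leq 0$ pointwise. For the matching mean lower bound, I would split $\psi-\phi$ on a slight enlargement of $D_j$ into a subharmonic piece plus a Newtonian potential carrying the local restriction of $\mu$, and apply the sub-mean-value inequality to the subharmonic piece at the interior test point $\tilde z_j\in D(z_j^0, r(z_j^0)/(2C_2))$, using the hypothesis $\psi(\tilde z_j)\geq\phi(\tilde z_j)-\epsilon_j$. This yields $\int_{D_j}|\psi-\phi|\,dA\lesssim r(z_j^0)^2\bigl(\epsilon_j+|\mu|(D_j)\bigr)$; combined with $|\Delta\chi|\lesssim r(z_j^0)^{-2}$ it gives per-disk contribution $O(\epsilon_j+|\mu|(D_j))$, and summing over the bounded-overlap cover produces the advertised error $C_3 h^{-1}\bigl(|\mu|(W)+\sum_j\epsilon_j\bigr)$.

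The main obstacle is making the geometric set-up uniform in the Lipschitz constant $C_1$. The $\tfrac12$-Lipschitz condition \eqref{eq25} on $r$ ensures that neighboring disks $D_j$ have comparable radii and bounded overlap multiplicity, so summing local estimates incurs no logarithmic losses. The graph hypothesis \eqref{eq12}, together with the requirement $C_2\geq C_1^0(C_1)$ large enough, guarantees that each test point $\tilde z_j\in D(z_j^0, r(z_j^0)/(2C_2))$ lies at distance $\asymp r(z_j^0)$ inside $\Omega$; this interior margin is what allows the mean-value step and keeps the Newtonian-potential correction harmless. Once this geometric scaffolding is in place all constants depend only on $C_1$ and $C_2$, and the argument reduces to careful boundary bookkeeping around the Riesz representation identity.
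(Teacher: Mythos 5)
The paper does not actually prove this statement: it is quoted verbatim from Sj\"ostrand's monograph \cite[Theorem 12.1.2]{SJ2} and applied as a black box, so there is no in-paper proof to compare against. Your sketch follows what is in fact the standard route to results of this type, and is close in spirit to Sj\"ostrand's own argument: Riesz representation for $\psi=h\log|u|$, a cutoff adapted to the boundary disks, the one-sided bound $\psi\leq\phi$ on $W$, and a matching lower bound propagated from the test points via the sub-mean-value property after subtracting the local Newtonian potential of $\mu$. Your geometric remarks --- bounded overlap of comparable disks from \eqref{eq25}, and the interior margin for $\widetilde{z}_j$ guaranteed by \eqref{eq12} and $C_2\geq C_1^0$ --- are also the right ones.

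Two genuine gaps remain. First, the points $\widetilde{z}_j$ are part of the \emph{conclusion}, not the hypothesis: the theorem asserts they can be chosen in $D(z_j^0, r(z_j^0)/(2C_2))$ depending only on $\phi$ and $\mu$ (not on $u$), and your argument needs them to be points where the local Newtonian potential $N_j$ of $\mu|_{D_j}$ satisfies $|N_j(\widetilde{z}_j)|\lesssim |\mu|(D_j)$; otherwise the hypothesis $\psi(\widetilde{z}_j)\geq\phi(\widetilde{z}_j)-\epsilon_j$ gives no useful lower bound on the subharmonic piece $v=\psi-\phi+N_j$, since $N_j(\widetilde{z}_j)$ could be very negative where $\mu$ concentrates. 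Producing such points requires an averaging (Chebyshev) step over the small disk, which your proposal omits entirely. Second, controlling $\int(\psi-\phi)\Delta\chi$ is not by itself enough: you must still compare $\int\chi\,\Delta\psi$ with $2\pi h\,\#\bigl(u^{-1}(0)\cap\Omega\bigr)$ and $\int\chi\,d\mu$ with $\mu(\Omega)$. The latter discrepancy is trivially at most $|\mu|(W)$, but the former requires bounding the number of zeros of $u$ lying in the collar $W$ itself, where $\chi\neq\mathbf{1}_\Omega$. That bound follows from the quantitative local lemma (a subharmonic function $\leq 0$ on a disk which is $\geq-\epsilon$ at a well-interior point has Riesz mass $O(\epsilon)$ on a smaller concentric disk), which is strictly stronger than the $L^1$ estimate $\int_{D_j}|\psi-\phi|\lesssim r(z_j^0)^2(\epsilon_j+|\mu|(D_j))$ you invoke. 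Both gaps are fillable by standard potential theory, but they are precisely where the content of the theorem lies, so the proposal as written is an outline rather than a proof.
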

    \indent In our situation, $\Omega=[a,b]+i\left [-\frac{b-a}{2},\frac{b-a}{2} \right ]\subset [-2,1]+i\left [-\frac{3}{2},\frac{3}{2} \right ] \Subset [-3,3]+i[-2,2]$, $\Omega$ depends on $g$ and $h$. $\gamma =\partial \Omega$ is Lipschitz boundary with an associated Lipschitz weight $$\widetilde{r}(z)=4\epsilon+\frac{|\Im z|}{4}, \quad  z \in \mathbb{C}$$
    of Lipschitz modulus $\frac{1}{4}$. Since $\Omega$ is a square, then we can make $f_{x}$ be a Lipschitz function with modulus $C_{1}=1$ in \eqref{eq12} for each $x\in \gamma$. In the following, we apply Theorem \ref{holodistri} to determinants of a Fredholm operator whose zeros correspond to the spectrum of $P_{\omega}$ in $\Omega$. 
\subsection{Zero of determinants}\label{SS3}
    Recall the \eqref{imofspec}, we find:
    \begin{equation}\label{eq50}
    |\Im z | \leq 4c(g)h=\frac{\epsilon}{5}<\frac{\epsilon}{4}, \ \text{for any} \ z \in \sigma(P^{\delta}_{\omega})\cap \Omega.
    \end{equation}
    Now we consider the neighborhood of $\partial \Omega$, see the above Figure 1:
    $$\widetilde{W}=\bigcup_{z \in \partial \Omega}D(z,\widetilde{r}(z)) \quad \text{and} \quad W=\bigcup_{z \in \partial \Omega}D \left ( z,r(z) \right ),$$
    where $r(z)=\frac{\widetilde{r}(z)}{2}.$
    \begin{figure}\label{region}
      \center
  \includegraphics[scale=0.4]{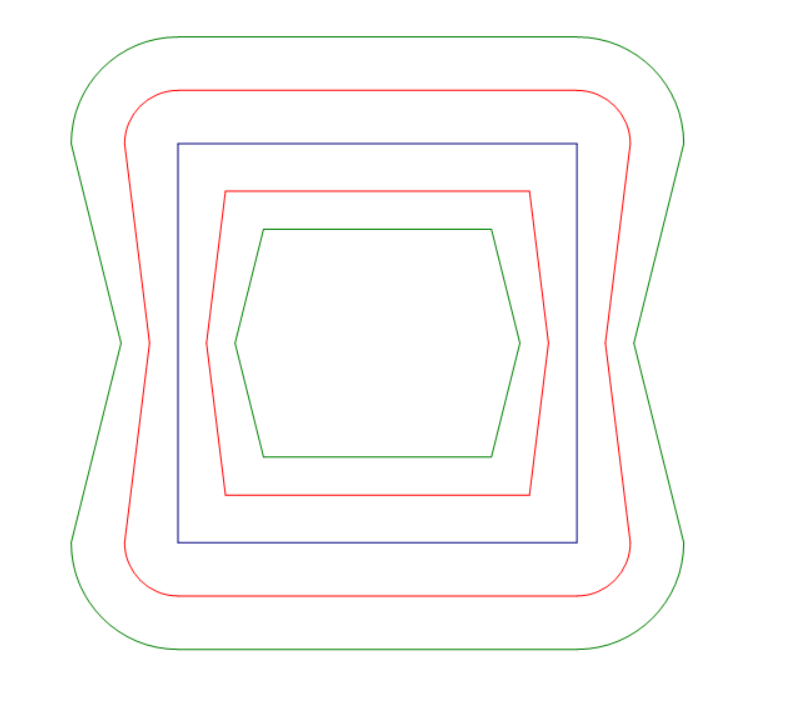}
  \caption{The blue line: $\partial \Omega$, the red line: $\partial W$, the green line: $\partial \widetilde{W}$}
  \end{figure}
    We note that $\Omega \cup \widetilde{W} \Subset [-3,3]+i[-2,2]$, now we explain why we choose $\delta=200\epsilon=4000c(g)h$.
    \begin{lemma}\label{eq15}
      For any $z \in \widetilde{W},$
      \begin{equation}\label{eq48}
      D(z, 2\widetilde{r}(z)) \cap \left \{w: \Re w \notin I_{\delta-\epsilon},\ |\Im w| \leq \frac{\epsilon}{4}\right \}= \emptyset,
      \end{equation}
      thus we have:
      \begin{equation}\label{eq21}
        ||(z-P^{\delta}_{\omega})^{-1}||\leq \frac{1}{\widetilde{r}(z)}, \ \text{for any} \ z \in \widetilde{W}.
      \end{equation}
      It follows that $P^{\delta}_{\omega}$ has no spectrum in $\widetilde{W}$. 
    \end{lemma}
    \begin{proof}
      For any $z\in \widetilde{W}$ and any $x \in D(z, 2\widetilde{r}(z))$, then there exists $y \in \gamma=\partial \Omega$ such that $|z-y|\leq \widetilde{r}(y)$. Since $4|\widetilde{r}(z)-\widetilde{r}(y)| \leq |z-y|\leq \widetilde{r}(y),$ it implies $\frac{3}{4}\widetilde{r}(y)\leq \widetilde{r}(z) \leq \frac{5}{4}\widetilde{r}(y).$ Then $$|x-y|\leq 2\widetilde{r}(z)+\widetilde{r}(y)\leq \frac{7}{2}\widetilde{r}(y).$$
      \begin{itemize}
      \item If $\Im y < 200\epsilon <\frac{b-a}{2}$ then $\Re y=a \ \text{ or } \ b$ by $y\in \gamma$, and $\widetilde{r}(y)\leq 4\epsilon+\frac{200\epsilon}{4}=54\epsilon$. It implies $|\Re x -\Re y|\leq 189\epsilon$, then $|\Re x -a |, |\Re x -b |\leq 189\epsilon < \delta-\epsilon$. It implies $\Re x \in I_{\delta-\epsilon}$.\\
      \item If $|\Im y|\geq 200\epsilon$, then $|\Im y|-|\Im x|\leq |\Im x-\Im y|\leq \frac{7}{2}\widetilde{r}(y)=14\epsilon+\frac{7}{8}|\Im y|.$ It implies $|\Im x|\geq \frac{1}{8}|\Im y|-14\epsilon \geq 11\epsilon$. 
      \end{itemize}
      Thus for any $z\in \widetilde{W}$ and any $x \in D(z, 2\widetilde{r}(z))$, we have $x \notin \left \{w: \Re w \notin I_{\delta-\epsilon},\ |\Im w| \leq \frac{\epsilon}{4}\right \}$, so we prove \eqref{eq48}. \\
      \indent Now we prove \eqref{eq21}. For any $z \in \widetilde{W}$, and any $z^{\prime} \in \sigma(P^{\delta})$, $\Re z^{\prime} \notin I_{\delta}$, then $z^{\prime}  \in \left \{w: \Re w \notin I_{\delta-\epsilon},\ |\Im w| \leq \frac{\epsilon}{4}\right \}$. By \eqref{eq48}, we have $d(z^{\prime},z)>2\widetilde{r}(z)$, it implies $d(z,\sigma(P^{\delta}))\geq 2\widetilde{r}(z)>\epsilon$, thus we have $$||(z-P^{\delta})^{-1}||\leq \frac{1}{2\widetilde{r}(z)}, \ \text{for any} \ z \in \widetilde{W}.$$ Since $(z-P^{\delta}_{\omega})(z-P^{\delta})^{-1}=\Id-K$, where $$K=ihQ_{\omega}(z-P^{\delta})^{-1}-h^{2}|\omega|_{x}^{2}(z-P^{\delta})^{-1},$$ then we use the previous argument in \eqref{eq3} to find $||K||<\frac{1}{2}$ for $c(g)h<10^{-4}$. Then by Neumann series, we prove \eqref{eq21}.
    \end{proof}
    In what follows, we follow the proof of \cite[Theorem 14.1.1]{SJ2} to construct the trace class perturbation operator by the semiclassical pseudodiffential calculus on the compact manifold, see Dyatlov-Zworski \cite[Appendix E]{DZ19}. Let $p(x,\xi)=|\xi|_{x}^{2}$ be the principal symbol of $P$, see \cite[Proposition E.14]{DZ19}. We introduce $\chi \in C_{0}^{\infty}(T^{*}X,[0,3])$ as
    $$\chi(x,\xi)= \begin{cases} 3 & \text { if } (x,\xi) \in p^{-1}([0,4]) \\ 0 & \text { if } (x,\xi) \in p^{-1}([5,\infty)) \end{cases}.$$
    Then we use a quantization procedure $\mathrm{Op}_{h}$ on the compact manifold $X$ to define $$\widetilde{P}:=P+i\mathrm{Op}_{h}(\chi), \quad \widetilde{P}_{\omega}:=P_{\omega}+i \mathrm{Op}_{h}(\chi).$$
    Here we briefly introduce the construction of the quantization procedure $\mathrm{Op}_{h}$, see \cite[Proposition E.15]{DZ19}. On the Euclidean space $\mathbb{R}^{d}$, for any $a \in C_{0}^{\infty}(T^*\mathbb{R}^d)$, the operator $\mathrm{Op}_{h}^{\mathrm{KN}}(a)\colon C^{\infty}(\mathbb{R}^{d})\mapsto C^{\infty}(\mathbb{R}^{d})$ which is defined by: $$\mathrm{Op}_{h}^{\mathrm{KN}}(a)u(x):=\left(\frac{1}{2\pi h}\right)^{d}\int_{\mathbb{R}^{2d}}e^{\frac{i(x-y)\cdot \xi}{h}}a(x,\xi)u(y)d\xi dy, \quad u\in  C^{\infty}(\mathbb{R}^{d}).$$
    Now we define a (non-canonical) quantization procedure on a closed surface $X$:
    \begin{definition}
    Let $\left(\varphi_j \colon U_j \mapsto V_{j}\subset \mathbb{R}^{2}, \chi_j\right)$ be cutoff charts of $X$, i.e. $\chi_{j}\in C_{0}^{\infty}(U_j)$, and $\chi_j^{\prime}$ functions satisfying $\chi_j^{\prime}=1$ near $\mathrm{supp}(\chi_j)$, then for any $a \in C_{0}^{\infty}(T^*X)$, the operator $\mathrm{Op}_{h}(a)\colon C^{\infty}(X)\mapsto C^{\infty}(X)$ which is defined by: 
$$
\mathrm{Op}_h(a):=\sum_{j} \chi_j^{\prime} \varphi_j^* \operatorname{Op}_h^{\mathrm{KN}}\left(\left(\chi_j a\right) \circ \widetilde{\varphi}_j^{-1}\right)\left(\varphi_j^{-1}\right)^* \chi_j^{\prime} .
$$
Here $\widetilde{\varphi}_j \colon T^*U_j \mapsto T^*V_j\subset T^*\mathbb{R}^2$ is defined by $\widetilde{\varphi}_{j}(x,\xi)=(\varphi_{j}(x), ((d\varphi_{j})_{x}^{t})^{-1}(\xi))$ which is the symplectic lifting of $\varphi_{j}$.
    \end{definition}
    Since $\chi \in C_{0}^{\infty}(T^{*}X,[0,3])$, then $\mathrm{Op}_{h}(\chi) \in \Psi_{h}^{k}(M)$ for all $k\in \mathbb{R}$, see \cite[Proposition E.12]{DZ19}. By Theorem \cite[Proposition E.22]{DZ19}, $\mathrm{Op}_{h}(\chi)\colon L^{2}(X) \mapsto H^{k}_{h}(X)$ is bounded uniformly in $h$ for all $k\in \mathbb{R}$. Then \cite[Proposition B.21]{DZ19} implies that $$\widetilde{P}-P \colon L^{2}(X) \mapsto L^{2}(X) \text { is a trace class operator }, \text { i.e. } ||\widetilde{P}-P||_{\mathrm{tr}}<\infty.$$ Since 
    $\sigma_{h}(\widetilde{P}_{\omega}-z)=p(x,\xi)+i \chi(x,\xi)-z$ satisfies that for any $z \in [-3,3]+i[-2,2]$:
    $$|\sigma_{h}(\widetilde{P}_{\omega}-z)|=\sqrt{(p(x,\xi)-\Re z)^2+(\chi(x,\xi)-\Im z)^2}\geq 1.$$
    By Theorem \cite[Proposition E.32]{DZ19} and \cite[Proposition E.22]{DZ19}, $\widetilde{P}_{\omega}-z \in \Psi^2(X)$ is a family holomorphic elliptic opertor for $z \in (-3,3)+i(-2,2)$, then the resolvent $(z-\widetilde{P}_{\omega})^{-1}\colon L^{2}(X) \mapsto L^{2}(X)$ is uniformly bounded in $z \in [-3,3]+i[-2,2]$. By \cite[Proposition B.28]{DZ19}, the eigenvalues of $P_{\omega}$ in $[-3,3]+i[-2,2]$ are the zeros of the determinant of a trace class perturbation of the identity, namely,
    \begin{equation*}
    D_{\omega}(z):=\det((P_{\omega}-z)(\widetilde{P}_{\omega}-z)^{-1})=\det(\Id-(\widetilde{P}-P)(\widetilde{P}_{\omega}-z)^{-1}).
    \end{equation*}
    Similarly, the eigenvalues of $P^{\delta}_{\omega}$ in $z \in [-3,3]+i[-2,2]$ are the zeros of 
    \begin{equation*}
      D_{\omega}^{\delta}(z):=\det((P^{\delta}_{\omega}-z)(\widetilde{P}_{\omega}-z)^{-1})=\det(\Id-(\widetilde{P}-P+P-P^{\delta})(\widetilde{P}_{\omega}-z)^{-1}),
    \end{equation*}
    Then we find that there are no such zeros of $D_{\omega}^{\delta}$ in $\widetilde{W}$ by \eqref{eq21} and the zeros of $D_{\omega}(z)$ and $D_{\omega}^{\delta}(z)$ are discrete for $z \in [-3,3]+i[-2,2]$. Now we briefly explain why $D_{\omega}(z)$ and $D_{\omega}^{\delta}(z)$ are holomorphic in $z \in (-3,3)+i(-2,2)$. \\
    \indent By \cite[Proposition B.29]{DZ19}, $D_{\omega}(z)$ and $D_{\omega}^{\delta}(z)$ are continuous in $[-3,3]+i[-2,2]$. Furthermore, by \cite[Page 511]{DZ19}, for a holomorphic family of operators $A(z)$ in trace class with $z$ in some domain $\Omega \subset \mathbb{C}$. The determinant $\operatorname{det}(I-A(z))$ is a holomorphic function of $z$, and for $z$ such that $I-A(z)$ is invertible. So we obtain that $D_{\omega}(z)$ and $D_{\omega}^{\delta}(z)$ are holomorphic in $(-3,3)+i(-2,2)$ except at the zeros. By Riemann's Removable Singularity Theorem, they are holomorphic in $z \in (-3,3)+i(-2,2)$.
    \begin{remark}
      We should point out that the determinants $D_{\omega}(z)$ and $D_{\omega}^{\delta}(z)$ may depend on the construction of $\chi$ and $\mathrm{Op}_{h}(\chi)$. But in the following context, we find $D_{\omega}(z)/D_{\omega}^{\delta}(z)$ is independent with $\chi$ and $\mathrm{Op}_{h}(\chi)$.
    \end{remark}
    We have 
    \begin{equation*}
      \begin{aligned}
        \frac{D_{\omega}(z)}{D^{\delta}_{\omega}(z)}&=\det((P_{\omega}-z)(\widetilde{P}_{\omega}-z)^{-1}(\widetilde{P}_{\omega}-z)(P^{\delta}_{\omega}-z)^{-1})=\det((P_{\omega}-z)(P^{\delta}_{\omega}-z)^{-1})\\
                                                    &=\det(1-(P^{\delta}-P)(P^{\delta}_{\omega}-z)^{-1}).\\
      \end{aligned}
    \end{equation*}
    Thus, by \cite[Proposition B.29]{DZ19},
    $$\left|\frac{D_{\omega}(z)}{D_{\omega}^{\delta}(z)}\right|\leq \exp ||(P^{\delta}-P)(P^{\delta}_{\omega}-z)^{-1}||_{\mathrm{tr}},$$
    and $$||(P^{\delta}-P)(P^{\delta}_{\omega}-z)^{-1}||_{\mathrm{tr}}\leq ||(P^{\delta}_{\omega}-z)^{-1}||\cdot||P^{\delta}-P||_{\mathrm{tr}}.$$
    From \eqref{eq1} and \eqref{eq21}, for any $z \in \widetilde{W}$:
    $$||(P^{\delta}-P)(P^{\delta}_{\omega}-z)^{-1}||_{\mathrm{tr}}\leq \frac{N^{P}_{X}(\lambda \in I_{\delta})\delta}{\widetilde{r}(z)}.$$
    We conclude that for any $z \in \widetilde{W}$:
    \begin{equation}\label{eq5}
      \log |D_{\omega}(z)|-\log |D^{\delta}_{\omega}(z)|\leq \frac{N^{P}_{X}(\lambda \in I_{\delta})\delta}{\widetilde{r}(z)}.
    \end{equation}
    Now we consider $\log |D^{\delta}_{\omega}(z)|-\log |D_{\omega}(z)|$, where 
    \begin{equation*}
      \begin{aligned}
        \frac{D^{\delta}_{\omega}(z)}{D_{\omega}(z)}=\det(1-(P^{\delta}-P)(P_{\omega}-z)^{-1}).
      \end{aligned}
    \end{equation*}
    Now we estimate $||(P_{\omega}-z)^{-1}||$ for $|\Im z|\geq \frac{\epsilon}{4}.$
    $$(z-P_{\omega})(z-P)^{-1}=\Id-ih Q_{\omega}(z-P)^{-1}+h^{2}|\omega|_{x}^{2}(z-P)^{-1},$$
    and $$||ih Q_{\omega}(z-P)^{-1}+h^{2}|\omega|_{x}^{2}(z-P)^{-1}||\leq h\cdot 2c(g) \cdot 4\epsilon^{-1} + c(g)^{2}h^{2}\cdot 4\epsilon^{-1}<\frac{1}{2},$$
    thus we have
    $$||(P_{\omega}-z)^{-1}||\leq \frac{2}{|\Im z|}\lesssim \frac{1}{\widetilde{r}(z)}, \ \text{ for } \ |\Im z| \geq \frac{\epsilon}{4}.$$
    Then we find for $|\Im z| \geq \frac{\epsilon}{4}$ and $z \in \widetilde{W}$:
    \begin{equation}\label{eq6}
      \log |D^{\delta}_{\omega}(z)|-\log |D_{\omega}(z)| \lesssim  \frac{N^{P}_{X}(\lambda \in I_{\delta})\delta}{\widetilde{r}(z)}.
    \end{equation}
    Recall that $D_{\omega}^{\delta}(z)$ is holomorphic in $(-3,3)+i(-2,2)$, let $$\log |D^{\delta}_{\omega}(z)|:=\frac{\Phi_{0}(z)}{h^{2}}.$$
    Then $\Phi_{0}(z)$ is harmonic function in $(-3,3)+i(-2,2)$ except at the zeros of $D_{\omega}^{\delta}(z)$. In particular, $\Phi_{0}(z)$ is harmonic in $\widetilde{W}$ as shown by \eqref{eq21}. It is known that if $u$ is holomorphic in a domain $\Omega \subset \mathbb{C}$, then $$\# \{z \in \Omega: u(z)=0 \}=\frac{1}{2\pi}\int_{\Omega}\log |u(z)|dA(z)$$ by $\Delta \log |z|=\delta_{0}$ in $\mathcal{D}^{\prime}(\mathbb{C})$, thus we have:
    \begin{equation}\label{eq16}
      N^{P^{\delta}_{\omega}}_{X}(\lambda \in \Omega)=\frac{1}{2\pi h^{2}}\int_{\Omega}\Delta \Phi_{0}(z)dA(z).
    \end{equation}
    For $z\in W$, we define $$\Phi(z)=\Phi_{0}(z)+\frac{C^{\prime }N^{P}_{X}(\lambda \in I_{\delta})\delta h^{2}}{(|\Im z|^{2}+\delta^{2})^{\frac{1}{2}}},$$
    where $C^{\prime }$ is a sufficiently large constant such that 
    \begin{equation}\label{eq47}
    \log |D_{\omega}(z)|\leq \frac{\Phi(z)}{h^{2}}, \quad \text{for} \ z \in W.
    \end{equation}
    Now we need the following lemma:
    \begin{lemma}
      Let $\gamma$ be a simple closed curve and $\widetilde{r} \colon \gamma \mapsto (0,\infty)$ be a Lipschitz function of Lipschitz modulus $\leq \frac{1}{4}$, fix $\theta>0$ and we define $$\widetilde{W}=\bigcup_{z \in \gamma}D(z,\widetilde{r}(z)), \quad W=\bigcup_{z \in \gamma}D(z,r(z)),$$
      where $r(z)=\frac{\widetilde{r}(z)}{1+\theta}$, then there exists $\Psi \in C^{\infty}_{0}(\mathbb{C},[0,1])$ such that 
      \begin{itemize}
      \item $\mathrm{supp}(\Psi)\subset \widetilde{W}$; 
      \item $\Psi=1$ on $W$; 
      \item $|\partial_{\Re z, \Im z}^{\alpha}\Psi(z)|\lesssim_{\alpha}r(z)^{-|\alpha|}.$ 
      \end{itemize}
    \end{lemma}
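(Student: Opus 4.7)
I would prove this as a standard partition-of-unity/Whitney-type construction adapted to the variable scale $r(z)$. The Lipschitz condition on $\widetilde{r}$ (with modulus $\tfrac14$) is used to ensure that $r$ varies only slowly compared to the radius of the disks it governs, so that disks of comparable radius can be matched across nearby base points of $\gamma$.

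First, I would extend $\widetilde r$ (hence $r$) to a Lipschitz function on all of $\mathbb{C}$ with the same modulus, via $\widetilde r(z):=\inf_{w\in\gamma}\bigl(\widetilde r(w)+\tfrac14|z-w|\bigr)$. Choose a constant $K=K(\theta)$ large enough and pick a maximal $\tfrac{r(\cdot)}{K}$-separated set $\{z_j\}_{j\in J}\subset \gamma$, in the sense that $|z_j-z_k|\ge \tfrac{1}{K}\min(r(z_j),r(z_k))$ for $j\ne k$ and every $z\in\gamma$ satisfies $|z-z_j|\le \tfrac{2}{K}r(z_j)$ for some $j$. Using the Lipschitz property, two indices $j,k$ with overlapping disks $D(z_j,\widetilde r(z_j))\cap D(z_k,\widetilde r(z_k))\ne\emptyset$ necessarily have comparable radii $r(z_j)\asymp r(z_k)$, and a standard volume-packing argument then bounds the number of indices $k$ that can overlap a given $j$ by a constant $N=N(\theta)$.

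Next, fix a radial profile $\chi\in C_{0}^{\infty}(\mathbb{R},[0,1])$ with $\chi(t)=1$ for $t\le 1+\tfrac{\theta}{3}$ and $\chi(t)=0$ for $t\ge 1+\tfrac{2\theta}{3}$, and set
\[
\phi_{j}(z):=\chi\!\left(\frac{|z-z_{j}|}{r(z_{j})}\right),\qquad
\Sigma(z):=\sum_{j\in J}\phi_{j}(z),\qquad
\Psi(z):=F(\Sigma(z)),
\]
where $F\in C^{\infty}(\mathbb{R},[0,1])$ satisfies $F\equiv 0$ on $(-\infty,0]$ and $F\equiv 1$ on $[1,\infty)$. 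Scaling gives $|\partial^{\alpha}\phi_{j}|\lesssim_{\alpha,\theta}r(z_j)^{-|\alpha|}$, and $\operatorname{supp}\phi_{j}\subset D(z_j,(1+\tfrac{2\theta}{3})r(z_j))\subset \widetilde W$. For any $w\in W$, pick $z\in\gamma$ with $|w-z|<r(z)$ and $z_j$ with $|z-z_j|\le \tfrac{2}{K}r(z_j)$; the Lipschitz bound yields $r(z)\le r(z_j)(1+\tfrac{1}{2K})$, so $|w-z_j|\le (1+\tfrac{1}{2K}+\tfrac{2}{K})r(z_j)<(1+\tfrac{\theta}{3})r(z_j)$ once $K$ is chosen large enough in terms of $\theta$. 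Therefore $\phi_{j}(w)=1$ and $\Psi(w)=1$. Outside $\widetilde W$ every $\phi_j$ vanishes and $\Psi\equiv 0$.

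It remains to check the derivative estimate. Since at each point at most $N(\theta)$ of the $\phi_{j}$ are nonzero, and their scales $r(z_{j})$ are all comparable (again by Lipschitz) to $r(z)$ for any point in their common support, the Faà di Bruno formula applied to $\Psi=F\circ\Sigma$ gives $|\partial^{\alpha}\Psi(z)|\lesssim_{\alpha,\theta}r(z)^{-|\alpha|}$. The main technical obstacle is the bookkeeping in the previous paragraph: one must make sure that the constant $K$, the comparability constants for $r(z_j)\asymp r(z_k)$ under overlap, and the overlap multiplicity $N$ can all be chosen consistently as a function of $\theta$ alone (with the Lipschitz modulus fixed at $\tfrac14$), so that the hidden constant in $\lesssim_\alpha$ depends only on $\alpha$ (and $\theta$, which is fixed in the application to $P^{\delta}_{\omega}$).
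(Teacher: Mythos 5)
Your construction is correct, but it is genuinely different from the one in the paper. You build $\Psi$ as a Whitney-type partition of unity: a maximal $r/K$-separated net $\{z_j\}$ on $\gamma$, bumps $\phi_j$ at scale $r(z_j)$, and a post-composition $\Psi=F(\Sigma)$, with the Lipschitz hypothesis used to get comparability of scales under overlap and a bounded overlap multiplicity $N(\theta)$. The paper instead first enlarges $W$ slightly to $W_{\varepsilon}=\bigcup_{z\in W}D(z,\varepsilon r(z))$ with $\tfrac{10\varepsilon}{4-\varepsilon}<\theta$, proves (by the same kind of Lipschitz chasing you do) that points outside $\widetilde W$ stay at distance $\gtrsim \varepsilon r$ from $W_{\varepsilon}$, and then mollifies the indicator $\mathbf{1}_{W_{\varepsilon}}$ with a kernel of variable width $\varepsilon r_{s}(z)$, where $r_{s}$ is an explicit smooth function comparable to $r$ (in the application, $r_{s}(z)=\sqrt{16\epsilon^{2}+|\Im z|^{2}/16}$). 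The mollification route is shorter and gives the derivative bounds directly by differentiating the kernel, but it leans on having a smooth comparable substitute $r_{s}$ for $r$, which the paper only exhibits in the concrete case; your net-and-bumps construction needs no such substitute (the scale enters only through the discrete values $r(z_j)$) and so proves the lemma in the generality in which it is stated, at the cost of the covering and Fa\`a di Bruno bookkeeping. Your computations for comparability under overlap and for $\Psi\equiv 1$ on $W$ are quantitatively correct, and your closing caveat about tracking the dependence on $\theta$ (fixed to $\theta=1$ in the application) matches the implicit $\theta$-dependence, via $\varepsilon$, in the paper's argument; the only point you should make explicit is the extension of $r$ off $\gamma$ (needed even to state the derivative bound), which you handle by the McShane extension and the paper handles by taking $\widetilde r$ globally defined from the outset.
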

    \begin{proof}
      Let $$W_{\varepsilon}:=\bigcup_{z \in W}D(z,\varepsilon r(z)),$$
      where $\varepsilon>0$ satisfies $\frac{10\varepsilon}{4-\varepsilon}<\theta$, then for any $z_{1} \notin \widetilde{W}$, and any $|z_{2}-z_{1}|\leq \varepsilon r(z_{1})$ then $z_{2} \notin W_{\varepsilon}.$ If $z_{2} \in W_{\varepsilon}$, then there exists $z_{3} \in W$ and $z_{4} \in \gamma$ such that
      $$|z_{1}-z_{2}|<\varepsilon r(z_{1}), \quad |z_{2}-z_{3}|<\varepsilon r(z_{3}), \quad |z_{3}-z_{4}|<r(z_{4}). $$
      Then $$|z_{1}-z_{4}|<\varepsilon(r(z_{1})+r(z_{3}))+r(z_{4}),$$
      and $$r(z_{1})< \frac{4}{4-\varepsilon}r(z_{2}), \quad r(z_{2})< \frac{4+\varepsilon}{4}r(z_{3}), \quad r(z_{3})<\frac{5}{4}r(z_{4}).$$
      It implies
      $$|z_{1}-z_{4}|<\left ( \frac{10\varepsilon}{4-\varepsilon}+1 \right )r(z_{4})<(1+\theta)r(z_{4})=\widetilde{r}(z_{4}).$$
      Then $z_{1}\in \widetilde{W}$, it is a contradiction. So we prove that $z_{1}\notin \widetilde{W} $. Let $r_{s}(x)$ be
      $$
       r_{s}(z)=\sqrt{16\epsilon^{2}+\frac{|\Im z|^{2}}{16}}\asymp r(z).
      $$
      Now we construct $$\Psi(z)=\int_{\mathbb{C}} \widetilde{\chi}\left(\frac{z-w}{\varepsilon r_{s}(z)}\right)(\varepsilon r_{s}(z))^{-2}\mathbf{1}_{W_{\varepsilon}}(w)dA(w),$$
      where $\widetilde{\chi}\in C_{0}^{\infty}(D(0,1);\mathbb{R})$ satisfies $\widetilde{\chi}\geq 0$ and $\int_{\mathbb{C}}\widetilde{\chi}(z)dA(z)=1$. If $z \in W$, then we find $|w-z|\leq \varepsilon r_{s}(z)\leq \varepsilon r(z)$ implies $w \in W_{\varepsilon}$, then $\Psi(z)=1$. If $z \notin \widetilde{W}$, $|w-z|\leq \varepsilon r_{s}(z)\leq \varepsilon r(z)$ implies $w \notin W_{\varepsilon}$, then $\Psi(z)=0$. Finally,
      $$|\partial_{\Re z, \Im z}^{\alpha}\Psi(z)|\lesssim_{\alpha} r_{s}(z)^{-|\alpha|}\lesssim_{\alpha} r(z)^{-|\alpha|}.$$
    \end{proof}
    In our situation, $\theta=1$. Then we extend $\Phi$ to the whole $\overline{\widetilde{W}\cup \Omega}$ by 
    \begin{equation}\label{extendofharmonic}
    \Phi(z)=\Phi_{0}(z)+\frac{C^{\prime }N^{P}_{X}(\lambda \in I_{\delta})\delta h^{2}}{(|\Im z|^{2}+\delta^{2})^{\frac{1}{2}}}\Psi(z).
    \end{equation}
    It concludes that $$|\Delta \Phi(z)-\Delta \Phi_{0}(z)|\lesssim \frac{N^{P}_{X}(\lambda \in I_{\delta})\delta h^{2}}{(|\Im z|+\delta )^{3}}, \quad \forall z \in \widetilde{W}.$$
    Since $D_{\omega}^{\delta}$ has no zeros in $\widetilde{W}$, then $\Phi_{0}(z)$ is harmonic on $\widetilde{W}$, then
    $$\int_{\widetilde{W}}|\Delta \Phi(z)|dA(z)=\int_{\widetilde{W}}|\Delta \Phi(z)-\Delta \Phi_{0}(z)|dA(z).$$
    Now we estimate the integral 
    \begin{equation*}
      \begin{aligned}
        &\int_{\widetilde{W}}\frac{1}{(|\Im z|+\delta)^{3}}dA(z) \leq \int_{\widetilde{W} \cap \{|\Im z| \leq \frac{1}{2}\}}\frac{1}{(|y|+\delta)^{3}}dxdy+2\int_{[-3,3] \times [0,2]} \frac{1}{(\frac{1}{2}+\delta)^{3}}dxdy.\\
        &\leq 2\int_{0}^{\frac{1}{2}}\int_{a-4\epsilon-\frac{y}{4}}^{a+4\epsilon+\frac{y}{4}}\frac{1}{(y+\delta)^{3}}dxdy+ 2\int_{0}^{\frac{1}{2}}\int_{b-4\epsilon-\frac{y}{4}}^{b+4\epsilon+\frac{y}{4}}\frac{1}{(y+\delta)^{3}}dxdy+192\\
        &=4\int_{0}^{\frac{1}{2}}\frac{8\epsilon+y/2}{(y+\delta)^{3}}dy+192 \lesssim \delta^{-1}.\\
      \end{aligned}
    \end{equation*}
    The last inequality holds for $c(g)h<10^{-4}$. It implies
    \begin{equation}\label{eq7}
      \int_{\widetilde{W}}|\Delta \Phi(z)|dA(z)\lesssim N^{P}_{X}(\lambda \in I_{\delta})h^{2}.
    \end{equation}
    \subsection{Proof of Theorem \ref{perturoflaplacian}}\label{SS4}
    Now we apply Theorem \ref{holodistri}, taking $N$ distinct points $z^{0}_{j} \in \partial \Omega$, $j\in \mathbb{Z} / N \mathbb{Z}$ with the properties:
    \begin{itemize}
      \item They are distributed in the positive sense so that $j \mapsto \arg (z_{j}^{0}-\frac{a+b}{2})$ is increasing, and $z_{0}^{0}=b+i\frac{\epsilon}{2}$,
      \item There are precisely four points $z_{j}^{0}$ that minimize the distance to $\mathbb{R}$, namely $a\pm i \frac{\epsilon}{2},$ $b\pm i\frac{\epsilon}{2}$, 
      \item $\frac{r(z_{j}^{0})}{4} \leq |z^{0}_{j+1}-z^{0}_{j}|\leq \frac{r(z_{j}^{0})}{2}$.
    \end{itemize}
    Let $$N_{1}=\#\left\{z_{j}^{0}: \Re z_{j}^{0}=b, 0 \leq \Im z_{j}^{0} \leq \frac{b-a}{2}\right\}, \ N_{2}=\#\left\{z_{j}^{0}: a\leq \Re z_{j}^{0} \leq b, \Im z_{j}^{0}=\frac{b-a}{2}\right\}.$$ Since $$\Im z^{0}_{0}=\frac{\epsilon}{2}, \quad \Im z_{j+1}^{0}-\Im z_{j}^{0} \geq \frac{1}{8} \left (4\epsilon+\frac{\Im z_{j}^{0}}{4} \right ), \quad \forall j\leq N_{1},$$
    then $$\Im z_{j}^{0} \geq \left ( \frac{33}{32} \right )^{j}\cdot \frac{33\epsilon}{2}-16\epsilon \geq \frac{1}{2}\left ( \frac{33}{32} \right )^{j} \epsilon, \quad \forall j\leq N_{1}.$$
    Since $\Im z_{N_{1}}^{0} \leq \frac{b-a}{2}\leq \frac{3}{2}$, then $$N_{1}\leq 36|\log \epsilon|,$$
    and $$N_{2}\leq \frac{b-a}{\frac{\epsilon}{2}+\frac{b-a}{64}} < 64.$$
    By \eqref{eq6} and \eqref{extendofharmonic}, we can choose $$\epsilon_{j}=\frac{C^{\prime \prime}N^{P}_{X}(\lambda \in I_{\delta})\delta h^{2}}{|\Im z_{j}|+\delta},$$
    where $C^{\prime \prime}$ is a large constant such that for any $|\Im z|\geq \frac{\epsilon}{4}$ and $z\in W$,
    \begin{equation}\label{eq17}
    \log |D_{\omega}(z)| \geq \frac{1}{h^{2}}(\Phi(z)-\epsilon_{j}).
    \end{equation}
    By our construction, 
    \begin{equation}\label{eq24}
      \begin{aligned}
        \sum_{j \in \mathbb{Z} / N \mathbb{Z}}\epsilon_{j} 
        & \leq 4\sum_{j=0}^{N_{1}}\frac{C^{\prime \prime}N^{P}_{X}(\lambda \in I_{\delta})\delta h^{2}}{\Im z_{j}^{0}+\delta}+2\sum_{j=1}^{N_{2}} \frac{C^{\prime \prime}N^{P}_{X}(\lambda \in I_{\delta})\delta h^{2}}{\frac{b-a}{2}+\delta}\\
        & \leq C^{\prime \prime}N^{P}_{X}(\lambda \in I_{\delta})\delta h^{2} \left (\sum_{j=0}^{N_{1}} \left (\frac{33}{32} \right )^{-j} \epsilon^{-1}+N_{2}\delta^{-1} \right ) \lesssim N^{P}_{X}(\lambda \in I_{\delta})h^{2}.
      \end{aligned}
    \end{equation}
    For $z \in D\left (z_{j}^{0},\frac{r(z_{j}^{0})}{16} \right )\subset W$, we have $|\Im z|\geq  \frac{\epsilon}{2}-\frac{2\epsilon+\frac{\epsilon}{16}}{16} \geq \frac{\epsilon}{4}$. It implies \eqref{eq17} for any $z \in D\left(z_{j}^{0},\frac{r(z_{j}^{0})}{16} \right)$. Now we apply Theorem \ref{holodistri}. Recall that $C_1=1$ for $\Omega$, there exists a universal constant $C_{1}^{0}>0$ such that for any $C_{2}\geq C_{1}^{0}+16$, which satisfies:
    \begin{itemize}
      \item For any $z \in W$, $\log |D_{\omega}(z)|\leq \frac{\Phi(z)}{h^2}$ by \eqref{eq47}.\\
      \item For any $j$ and any $\widetilde{z} \in  D\left (z_{j}^{0},\frac{r(z_{j}^{0})}{2C_{2}} \right )$, $\log |D_{\omega}(\widetilde{z})| \geq \frac{1}{h^{2}}(\Phi(\widetilde{z})-\epsilon_{j})$ by $C_{2}\geq 16$.
    \end{itemize}
    Then there is a constant $C_{3}$ in Theorem \ref{holodistri} such that
    \begin{equation}
      \begin{aligned}
     \left | N^{P_{\omega}}_{X}(\lambda \in \Omega)-\frac{1}{2\pi h^{2}}\int_{\Omega}\Delta \Phi(z)dA(z) \right |\leq \frac{C_3}{h^{2}} \left ( \int_{W}|\Delta \Phi(z)|dA(z)+\sum_{j \in \mathbb{Z} / N \mathbb{Z}}\epsilon_{j} \right ).
      \end{aligned}
    \end{equation}
    By \eqref{eq7} and \eqref{eq24}, we have:
    \begin{equation}\label{eq22}
      \begin{aligned}
     \left | N^{P_{\omega}}_{X}(\lambda \in \Omega)-\frac{1}{2\pi h^{2}}\int_{\Omega}\Delta \Phi(z)dA(z) \right |\lesssim N^{P}_{X}(\lambda \in I_{\delta}).
      \end{aligned}
    \end{equation}
    Recall that: 
    \begin{equation*}
      \begin{aligned}
        &N^{P^{\delta}_{\omega}}_{X}(\lambda \in \Omega)=\frac{1}{2\pi h^{2}}\int_{\Omega} \Delta \Phi_{0}(z)dA(z)\\
        =&\frac{1}{2\pi h^{2}}\int_{\Omega} \Delta \Phi(z)dA(z) -\frac{1}{2\pi h^{2}}\int_{\Omega} \left ( \Delta \Phi- \Delta \Phi_{0} \right )dA(z),
      \end{aligned}
    \end{equation*}
    by \eqref{eq7}, we have:
    \begin{equation*}
      \begin{aligned}
      \left | \frac{1}{2\pi h^{2}}\int_{\Omega} \left ( \Delta \Phi- \Delta \Phi_{0} \right )dA(z) \right |  
      &\leq  \frac{1}{2\pi h^{2}}\int_{\widetilde{W}}|\Delta  \Phi- \Delta \Phi_{0}| dA(z) \lesssim N^{P}_{X}(\lambda \in I_{\delta}).
      \end{aligned} 
    \end{equation*}
    Then by \eqref{eq4},
    \begin{equation}\label{eq23}
    \left | N^{P}_{X}(\lambda \in I)-\frac{1}{2\pi h^{2}}\int_{\Omega}\Delta \Phi(z)dA(z) \right |\lesssim N^{P}_{X}(\lambda \in I_{\delta}).
    \end{equation}
    We prove Theorem \ref{perturoflaplacian} by \eqref{eq22} and \eqref{eq23}.
    \subsection{Proof of Theorem \ref{semiweyltwi}}\label{SS5}
     Finally, we estimate $N^{P}_{X}(\lambda \in I_{\delta})$. By Theorem \ref{uniweyl}, for any $g>g_{0},$
      \begin{equation*}
      \frac{N_{X}^{P}(b-\delta, b+\delta)}{\mathrm{Vol}(X)}=\frac{N_{X}^{-\Delta}((b-\delta)h^{-2}, (b+\delta)h^{-2})}{\mathrm{Vol}(X)} \leq \frac{2\delta h^{-2}}{4\pi}+R(X,(b-\delta)h^{-2},(b+\delta)h^{-2}),
      \end{equation*}
      and for $0<h\leq 1$ and $c(g)h<10^{-4}$, let $u=b+\delta+h^{2}<2$, then 
      \begin{equation*}
        \begin{aligned}
          R(X,(b-\delta)h^{-2},(b+\delta)h^{-2})
          &\leq C_{0}h^{-1}\sqrt{\frac{b+\delta+h^2}{\log g}\log \left( 2+2\delta h^{-1}\sqrt{\frac{\log g}{b+\delta+h^2}}\right )} \\
          & \leq C_{0}h^{-1}\sqrt{\frac{u\log (2u^{\frac{1}{2}}+8000   c(g)\sqrt{\log g})-\frac{1}{2}\cdot u\log u}{\log g}} \\
          & \lesssim h^{-1}\sqrt{\frac{\log (2+2c(g)\sqrt{\log g})}{\log g}}.
        \end{aligned}
      \end{equation*}
      Similarly,
      \begin{equation*}
      R(X,(a-\delta)h^{-2},(a+\delta)h^{-2})\lesssim h^{-1}\sqrt{\frac{\log (2+2c(g)\sqrt{\log g})}{\log g}}.
      \end{equation*}
      Here we simplify the upper bound, since
    $$\sqrt{\log (2+2c(g)\sqrt{\log g})}\leq \sqrt{1+2c(g)\sqrt{\log g}} \leq c(g)\sqrt{\log g}+1,$$
    then       
    \begin{equation*}
      \sqrt{\frac{\log 2}{\log g}}+c(g) \leq \sqrt{\frac{\log (2+2c(g)\sqrt{\log g})}{\log g}}+c(g)\leq 2c(g)+\sqrt{\frac{1}{\log g}}.
      \end{equation*}
      It implies
        \begin{equation}\label{constofgenus}
          \frac{1}{2}\left(\sqrt{\frac{1}{\log g}}+c(g)\right)\leq \sqrt{\frac{\log (2+2c(g)\sqrt{\log g})}{\log g}}+c(g)\leq 2\left(\sqrt{\frac{1}{\log g}}+c(g)\right).
          \end{equation}
      Then for $0<h\leq 1$, $c(g)h<10^{-4}$ and $g>g_{0}$:
      \begin{equation}\label{eq10}
        N^{P}_{X}(\lambda \in I_{\delta}) \lesssim \mathrm{Vol}(X)\left ( \sqrt{\frac{1}{\log g}}+c(g) \right ) h^{-1},
      \end{equation}
      By Theorem \ref{perturoflaplacian} and \eqref{eq10}, we finish the proof of Theorem \ref{semiweyltwi}.
    \section{$L^{\infty}$ norm of harmonic forms on typical surfaces}\label{sec4}
    Theorem \ref{semiweyltwi} demonstrates that the spectral distribution of $\Delta_{\omega}$ depends on the $L^{\infty}$ norm of $\omega$. Gilmore, Le Masson, Sahlsten, and Thomas \cite[Theorem 1.1]{GCLS} show that the $L^{\infty}$ norms of $L^2$-normalized eigenfunctions on $X$ decay to $0$ with high probability in the large genus limit. Inspired by their results, we show in the following theorem that the $L^{\infty}$ norms of harmonic forms have a similar estimate.
\begin{theorem}\label{deh}
  For any $0<\epsilon<\frac{1}{8}$, there exists $g_{\epsilon}>0$ depending only on $\epsilon$ such that for any $g>g_{\epsilon}$, $X \in \mathcal{L}_{g}$ defined in \eqref{short} and $\omega \in \mathcal{H}^{1}(X)$, we have
  \begin{equation}
  ||\omega||_{L^{\infty}}\leq \frac{g^{-\frac{1}{8}+\epsilon}}{\sqrt{\mathrm{Inj}(X)}}||\omega||_{L^{2}}.
  \end{equation}
\end{theorem}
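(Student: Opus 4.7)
The strategy follows the Gilmore--Le Masson--Sahlsten--Thomas \cite{GCLS} approach to $L^\infty$ estimates on typical high-genus surfaces: bound the pointwise value of $|\omega|$ by its $L^2$ average on a hyperbolic ball large enough that the volume factor $e^{R}$ produces the exponent $g^{-1/8}$. Since $\omega$ is harmonic and the Ricci tensor of $X$ acts as $-\mathrm{Id}$ on $\Omega^{1}$, the Weitzenb\"ock identity gives $\nabla^{*}\nabla\omega = \omega$, and combined with Kato's inequality this shows that $v := |\omega|$ is a nonnegative weak subsolution of $\Delta + 1$, i.e.
\[
\Delta v + v \geq 0 \quad \text{on } X,
\]
and consequently also on the universal cover $\mathbb{H}$ after lifting.

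Fix $\tilde x\in \mathbb{H}$ a lift of $x\in X$. De Giorgi--Nash--Moser iteration applied to the $\Gamma$-invariant subsolution $v$ on $B_{\mathbb{H}}(\tilde x,R)$, with the bounded zeroth-order term absorbed using the hyperbolic Sobolev inequality, gives a mean-value estimate
\[
v(\tilde x)^{2} \leq C(1+R)^{A}\cdot \frac{1}{\mathrm{Vol}_{\mathbb{H}}(B(\tilde x,R))}\int_{B_{\mathbb{H}}(\tilde x,R)} v^{2}\,dV_{\mathbb{H}}
\]
for universal constants $C, A$. Unfolding the integral by $\Gamma$-invariance of $v^{2}$ yields
\[
\int_{B_{\mathbb{H}}(\tilde x,R)} v^{2}\,dV_{\mathbb{H}} = \int_{X} N_{R}(y;\tilde x)\, |\omega(y)|^{2}\,dV_{X},\qquad N_{R}(y;\tilde x) := \#\{\gamma\in\Gamma:\, d(\tilde x,\gamma y)\leq R\}.
\]
Any point $y$ with $N_{R}(y;\tilde x)\geq 2$ produces a nontrivial closed loop based at $x$ of length at most $2R$; because $X\in \mathcal{L}_{g}$, as long as $2R$ is below the threshold $2\log g - 4\log\log g - \omega(g)$ appearing in the definition of $\mathcal{L}_{g}$, no such loop can be part of a separating simple closed multi-geodesic, so every self-intersection must wind around a short non-separating geodesic. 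A Margulis-tube packing argument around such a short geodesic then gives the uniform estimate $N_{R}(y;\tilde x) \lesssim R/\mathrm{Inj}(X)$.

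Choosing $R$ so that $\mathrm{Vol}_{\mathbb{H}}(B(R))\asymp g^{1/4}$, that is $R = \tfrac14\log g + O(1)$, and combining the previous steps gives
\[
|\omega(x)|^{2} \leq C(1+R)^{A}\cdot \frac{R}{\mathrm{Inj}(X)}\cdot \frac{1}{g^{1/4}}\,\|\omega\|_{L^{2}(X)}^{2} \leq \frac{g^{-1/4+\epsilon}}{\mathrm{Inj}(X)}\,\|\omega\|_{L^{2}(X)}^{2}
\]
for $g$ large enough in terms of $\epsilon$, and extracting the square root yields the claim. The main difficulty is the third step: converting the geometric hypothesis $\mathcal{L}_{1}(X) \geq 2\log g - 4\log\log g - \omega(g)$ from $\mathcal{L}_g$ into a uniform estimate on the covering multiplicity $N_{R}(y;\tilde x)$ at the scale $R\sim \tfrac14\log g$, with the correct linear dependence on $1/\mathrm{Inj}(X)$ that matches the factor $1/\sqrt{\mathrm{Inj}(X)}$ appearing in the statement.
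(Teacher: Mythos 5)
Your overall architecture (pointwise value controlled by an $L^2$ average over a ball of radius $\approx\frac14\log g$, unfolding to a lattice-point count, and using $X\in\mathcal{L}_g$ to show the covering multiplicity is $\lesssim R/\mathrm{Inj}(X)$ because two independent short loops at a point would produce a short separating simple closed multi-geodesic) is exactly the paper's, and your steps 3--4 match its Section 4. The gap is in step 2. You reduce to the statement that $v=|\omega|$, being a non-negative subsolution of $\Delta+1$, satisfies a mean-value inequality
\begin{equation*}
v(\tilde x)^{2}\;\leq\;C(1+R)^{A}\,\frac{1}{\mathrm{Vol}(B(\tilde x,R))}\int_{B(\tilde x,R)}v^{2}\,dV
\end{equation*}
with constants polynomial in $R$. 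This is \emph{false} for general non-negative subsolutions of $\Delta+1$ on $\mathbb{H}^2$ at scales $R\to\infty$: take $\varphi$ to be the radial solution of $-\Delta\varphi=\varphi$ with $\varphi(o)=1$ (spectral parameter $1>\frac14=\lambda_0(\mathbb{H}^2)$, so $\varphi$ oscillates and $|\varphi(\rho)|\lesssim(1+\rho)e^{-\rho/2}$); then $u=\max(\varphi,0)$ is a non-negative subsolution with $u(o)=1$ while $\frac{1}{\mathrm{Vol}(B(o,R))}\int_{B(o,R)}u^{2}\lesssim R^{3}e^{-R}$, so any valid constant must grow like $e^{R}$. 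Moser iteration cannot do better here, since its constant is governed by the doubling/Sobolev constants of $B(\tilde x,2R)$, which are exponential in $R$ on hyperbolic space; with $R\asymp\frac14\log g$ an exponential loss $e^{CR}=g^{C/4}$ destroys the entire estimate.

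The inequality you need is nonetheless true for $v=|\omega|$, but only because of structure that is lost when you pass to the Weitzenb\"ock subsolution: on $\mathbb{H}$ one has $\omega=df$ with $f$ genuinely harmonic, i.e.\ at spectral parameter $0<\frac14$, below the bottom of the spectrum. The paper exploits exactly this: it defines the radial convolution operator $A$ via $F(x)=\int_{\mathbb{H}}k(x,y)f(y)\,d\mathrm{Vol}(y)$, $(A\omega)_x=dF_x$, uses the exact mean-value property of harmonic functions (the spherical average $\widetilde f_x(y)$ is constant in $y$) to compute the eigenvalue $(A\omega)_x=2\pi\frac{\cosh t-1}{\sqrt{\cosh t}}\,\omega_x\gtrsim e^{t/2}\omega_x$ for the kernel $K_t=\frac{1}{\sqrt{\cosh t}}\mathbf{1}_{[0,t]}$, and only then applies Cauchy--Schwarz and the lattice-point count. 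To repair your argument you would need to replace the De Giorgi--Nash--Moser step by an estimate that uses harmonicity of the potential $f$ (equivalently, the explicit positive Selberg/spherical transform at the bottom eigenvalue), which is precisely the paper's device.
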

    Brock and Dunfield \cite[Theorem 4.1]{BD} estimated $L^{\infty}$ norm of harmonic 1-form $\omega$ by its $L^{2}$ norm on a closed 3-hyperbolic manifold, and Han \cite{HH} generalized it to a finite 3-hyperbolic manifold:
    \begin{theorem}[Brock-Dunfield \cite{BD}]\label{injesti}
      If $\alpha$ is a harmonic 1-form on a closed 3-hyperbolic manifold $M$, then
    \begin{equation}\label{estiforinf}
    \|\alpha\|_{L^{\infty}} \lesssim \frac{1}{\sqrt{\operatorname{Inj}(M)}}\|\alpha\|_{L^{2}}
    \end{equation}
    \end{theorem}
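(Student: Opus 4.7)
The plan is to run the classical Bochner--Weitzenböck method, specialised to the sign and magnitude of the Ricci curvature of $\mathbb{H}^{3}$. Since $\mathrm{Ric}_{\mathbb{H}^{3}} = -2g$, the Weitzenböck identity $(d\delta + \delta d)\alpha = \nabla^{*}\nabla\alpha + \mathrm{Ric}(\alpha)$ applied to a harmonic 1-form collapses, using $\Delta_{H}\alpha = 0$, to the rigid eigen-equation $\nabla^{*}\nabla\alpha = 2\alpha$. Thus $\alpha$ is an eigensection of the rough Laplacian with eigenvalue $2$, and every subsequent estimate is driven by this identity.

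Next I would pass to the scalar $u := |\alpha|$. A direct computation gives
$$
\Delta|\alpha|^{2} = 2|\nabla\alpha|^{2} - 2\langle\nabla^{*}\nabla\alpha,\alpha\rangle = 2|\nabla\alpha|^{2} - 4|\alpha|^{2},
$$
which combined with Kato's inequality $|\nabla|\alpha||^{2}\leq|\nabla\alpha|^{2}$ yields $\Delta u + 2u \geq 0$ weakly on $M$ (in the paper's convention of $\Delta$ being the non-positive Laplacian). Hence $u$ is a nonnegative subsolution of the shifted Laplacian $\Delta + 2$, and it suffices to prove a pointwise mean-value bound of the form $u(x)^{2} \lesssim R^{-1}\|u\|_{L^{2}(B(x,R))}^{2}$ for such subsolutions on geodesic balls of radius $R = \mathrm{Inj}(M)$.

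For this last step, I would fix $x\in M$ and use that $B(x,R)$ embeds isometrically into $\mathbb{H}^{3}$, so we may work on a hyperbolic ball. Testing the eigen-identity $\nabla^{*}\nabla\alpha = 2\alpha$ against $\varphi^{2}\alpha$ with a radial cut-off $\varphi$ supported in $B(x,R)$ gives the Caccioppoli bound
$$
\int\varphi^{2}|\nabla\alpha|^{2}\leq C\int|\alpha|^{2}(\varphi^{2} + |\nabla\varphi|^{2}),
$$
after which Moser iteration with the Sobolev inequality on $\mathbb{H}^{3}$ upgrades this to an $L^{\infty}$--$L^{2}$ mean-value estimate on $B(x,R)$. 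Taking the supremum over $x$ and dominating the right-hand side by $\|\alpha\|_{L^{2}(M)}$ then gives the asserted inequality.

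The main obstacle is extracting the sharp power $R^{-1/2}$ rather than the $R^{-3/2}$ that a crude three-dimensional volume-based mean-value inequality would yield on a ball. The two-powers-of-$R$ gain uses specific features of the equation $(\Delta+2)u\geq 0$ on $\mathbb{H}^{3}$: the constant $+2$ is exactly calibrated to the hyperbolic geometry (it is $-\mathrm{Ric}$ acting on 1-forms), and as a consequence the model radial equation on $\mathbb{H}^{3}$ admits fundamental solutions whose $L^{2}$ mass over a small ball scales linearly rather than cubically in the radius. Incorporating such a comparison solution as the weight in the Moser iteration, or equivalently applying the semigroup identity $\alpha = e^{-t\Delta_{H}}\alpha$ at time $t\asymp R^{2}$ together with sharp on-diagonal heat-kernel estimates on 1-forms on $\mathbb{H}^{3}$, is what produces the stated constant $R^{-1/2}$ and completes the proof after taking the supremum over $x \in M$.
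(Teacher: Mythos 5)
Your reduction in the second and third paragraphs is to a statement that is false, and this is a genuine gap rather than a technical detail. You claim it suffices to show that every nonnegative subsolution of $(\Delta+2)u\geq 0$ on a ball $B(x,R)\subset\mathbb{H}^3$ with $R=\operatorname{Inj}(M)$ satisfies $u(x)^2\lesssim R^{-1}\int_{B(x,R)}u^2$. But positive constants are such subsolutions, and for them the right-hand side is $\asymp R^{-1}\cdot R^{3}=R^{2}\to 0$ while the left-hand side stays fixed. The failure is not an artifact of passing to $|\alpha|$: the harmonic $1$-form $\alpha=dx_1$ on the upper half-space model, restricted to a small ball about $(0,0,1)$, is closed and co-closed with $|\alpha|\approx 1$ there, so even for genuine harmonic $1$-forms the local bound $|\alpha(p)|^{2}\lesssim R^{-1}\int_{B(p,R)}|\alpha|^{2}$ fails as $R\to 0$. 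Consequently no argument that uses only the equation on the embedded ball of radius $\operatorname{Inj}(M)$ — Caccioppoli plus Moser iteration, a weighted iteration with a fundamental-solution weight, or the heat semigroup at time $t\asymp R^{2}$ with $\mathbb{H}^{3}$ kernel bounds — can produce the exponent $\operatorname{Inj}(M)^{-1/2}$; the best purely local output of your scheme is the volume-normalized mean-value inequality, i.e.\ $\|\alpha\|_{L^\infty}\lesssim \operatorname{Inj}(M)^{-3/2}\|\alpha\|_{L^2}$, which is true but strictly weaker than the theorem. Your closing claim that the constant $+2$ is "exactly calibrated" to upgrade $R^{-3/2}$ to $R^{-1/2}$ is not substantiated and cannot be, given the counterexample.

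The missing idea is global. The gain from $\operatorname{Inj}^{-3/2}$ to $\operatorname{Inj}^{-1/2}$ comes from working at a macroscopic scale (radius of order a Margulis constant, or $\log g$ in the two-dimensional analogue proved in Section~\ref{sec4} of this paper), unfolding the integral over the deck group $\Gamma$, and bounding the multiplicity $\#\{\gamma\in\Gamma: d(x,\gamma y)\leq t\}$. When the injectivity radius is small, the elements achieving small displacement lie in the cyclic group generated by a single short geodesic (Margulis lemma / thick--thin decomposition), so this count is $O(t/\operatorname{Inj}(M))$ rather than $O(\operatorname{Vol}(B_t)/\operatorname{Inj}(M)^{3})$, and the single factor $\operatorname{Inj}(M)^{-1/2}$ appears after Cauchy--Schwarz. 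This is exactly how the paper proves its surface version (Theorem~\ref{deh}): lift $\omega=df$ with $f$ harmonic, use the mean-value property to show that convolution with a radial kernel acts as an explicit scalar on harmonic forms, and then estimate the unfolded kernel sum using the counting bound \eqref{count2}. Note also that the paper does not reprove Theorem~\ref{injesti}; it cites Brock--Dunfield, whose argument likewise relies on the invariance of the lifted form and the geometry of short-geodesic tubes, not on a local differential inequality at scale $\operatorname{Inj}(M)$. Your first two displayed identities (Weitzenböck giving $\nabla^{*}\nabla\alpha=2\alpha$, Bochner plus Kato giving $(\Delta+2)|\alpha|\geq 0$) are correct, but they are not the engine of the estimate.
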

    It is not difficult to find Theorem \ref{injesti} also holds for closed hyperbolic surfaces. However, Mirzakhani shows that for any $r>0$, $\mathrm{Inj}(X)\geq r>0$ does not occur with high probability in \cite{MM}. We try to improve the estimate in \eqref{estiforinf} on a typical surface. 
    \subsection{The operator on 1-form space generated by radial function}
      By the proof of \cite[Theorem 1.1]{GCLS}, we try to construct the operator $A$ on 1-form space generated by the radial function $k$, and we expect that the harmonic form, generally, an eigenform of Hodge Laplacian\footnote{Hodge Laplacian $\Delta=d\delta+\delta d$, $\delta$ is the dual of $d$, which is positive Laplacian. In this section, we assume $\Delta$ is Hodge Laplacian.} is also an eigenform of $A$. We turn to study the function related to the harmonic form $\omega$. For any $\omega \in \mathcal{H}^{1}(X)$, we can lift it into the contractible universal cover space $\mathbb{H}$\footnote{Here we use Poincaré half-plane model.} and we still denote it $\omega$. Then there is a smooth function $f$ on $\mathbb{H}$ such that $$\omega_{x}=df_{x}, \quad \forall x\in \mathbb{H}.$$
      Let $K \colon \mathbb{R}\mapsto \mathbb{R}$ be a compactly supported function then define $k \colon \mathbb{H}\times \mathbb{H} \mapsto \mathbb{R}$ by $k(x,y)=K(d(x,y))$.
      We define $$F(x)=\int_{\mathbb{H}}k(x,y)f(y)d\Vol(y),$$
      where $F$ is $\Gamma$-invariant. The operator $A_0$ on $\mathcal{H}^{1}(X)$ is defined by
      \begin{equation}\label{ophar}
      (A_{0}\omega)_{x}=dF_{x}, \quad \forall x\in \mathbb{H}.
      \end{equation}
      We notice that $A_0$ is well defined since when $df_{1}=df_{2}$ then $f_{1}-f_{2}=c$, $c$ is some constant and 
      $$\int_{\mathbb{H}}k(x,y)\cdot (f_{1}-f_{2})d\Vol(y)=2\pi c\int_{0}^{\infty}K(\rho)\sinh(\rho)d\rho$$
      which is a constant, so we deduce that $A_{0}\omega=dF$ is well defined.
      Now we denote the stable subgroup $$G_{x}:=\{\gamma \in \mathrm{Iso}(\mathbb{H}): \gamma x=x\},$$
      and $d\gamma$ is the normalized Haar measure on $G_{x}$ ($G_{x}$ is isomorphic to $\mathbb{S}^{1}$).
      So we consider
      \begin{equation}
        \begin{aligned}
          F(x)&=\int_{\mathbb{H}}k(x,y)f(y)d\Vol(y)=\int_{\mathbb{H}}\int_{G_{x}}k(\gamma x,y)f(y)d\gamma d\Vol(y)\\
              &=\int_{\mathbb{H}}\int_{G_{x}}k(x,\gamma^{-1}y)f(y)d\gamma d\Vol(y)=\int_{\mathbb{H}}k(x,y)\int_{G_{x}}f(\gamma y)d\gamma d\Vol(y)\\
        \end{aligned}
      \end{equation}
    We define $$\widetilde{f}_{x}(y)=\int_{G_{x}}f(\gamma y)d\gamma.$$
    When $\omega \in \mathcal{H}^{1}(X)$, $$\Delta f=\delta df=\delta \omega=0.$$
    Then $\widetilde{f}_{x}(y)$ is also harmonic, only depends on $d(x,y)$ and $\widetilde{f}_{x}(x)=f(x)$. 
    By \cite[Page 49 (3.5)]{BD16}, we have:
    $$\widetilde{f}_{x}(y)=f(x), \ \forall y \in \mathbb{H}.$$
    Then $$F(x)=\int_{\mathbb{H}}k(x,y)d\Vol(y)\cdot f(x)=2\pi\int_{0}^{\infty}K(\rho)\sinh(\rho)d\rho\cdot f(x).$$
    We deduce that 
    \begin{equation}\label{4radial}
      (A_{0}\omega)_{x}=dF_{x}=2\pi\int_{0}^{\infty}K(\rho)\sinh(\rho)d\rho\cdot df_{x}=2\pi\int_{0}^{\infty}K(\rho)\sinh(\rho)d\rho\cdot \omega_{x}.
    \end{equation}
    \subsection{$L^{\infty}$ norm of harmonic 1-form}
    Now we take $k(x,y)=K(d(x,y))$ as the test function in \cite[Section 4.2]{GCLS}, 
    \begin{equation}\label{untermpertest}
    K_{t}(\rho)=\frac{1}{\sqrt{\cosh(t)}}\mathbf{1}_{\{\rho<t\}}, \quad k_{t}(x,y)=K_{t}(d(x,y)).
    \end{equation}
    Then
    \begin{equation}\label{4inf}
     2\pi\int_{0}^{\infty}K_{t}(\rho)\sinh(\rho)d\rho=\frac{2\pi}{\sqrt{\cosh(t)}}\int_{0}^{t}\sinh(\rho)d\rho=2\pi\frac{\cosh(t)-1}{\sqrt{\cosh(t)}}.
    \end{equation}
    It deduces that
    \begin{equation}\label{4eq}
      (A_{0}\omega)_{x}=2\pi\frac{\cosh(t)-1}{\sqrt{\cosh(t)}}\omega_{x}.
    \end{equation}
    Now we study the $dF_{x}(v)$ for any $v \in T_{x}\mathbb{H}$, $|v|_{x}=1$. Since the isometric group $\mathrm{Iso}(\mathbb{H})$\footnote{In fact, $\mathrm{Iso}(\mathbb{H})$ is isomorphic to $\mathrm{PSL}(2,\mathbb{R})$, the action is given by Möbius transformations, see e.g. \cite{BU} and \cite{BD16}.} of $\mathbb{H}$ acts on the sphere bundle $S\mathbb{H}$ is transitive, i.e. for any $(x,v) \in S\mathbb{H}$, there exists $\gamma \in \mathrm{Iso}(\mathbb{H})$ such that $\gamma \cdot (i,(0,1))=(\gamma(i),(d\gamma)_{i}(0,1))$. So we can assume that $$x=i, \quad v=(0,1).$$
    We construct a family isometries $\varphi_{t} \in \mathrm{Iso}(\mathbb{H}), t\in \mathbb{R}$ such that
    $$\varphi_{t}(z)=e^{t}z, \quad \forall z \in \mathbb{H}.$$ 
    We obtain $$\left. \frac{d\varphi_{t}}{dt} \right|_{t=0}(i)=(0,1).$$
    It deduces that 
    \begin{equation}
      dF_{x}(v)=\lim_{t\to 0}\frac{F(\varphi_{t}(x))-F(x)}{t}.
    \end{equation}
    Now we consider $F(\varphi_{t}(x)),$
    \begin{equation}
      \begin{aligned}
      F(\varphi_{t}(x))&=\int_{\mathbb{H}}k(\varphi_{t}(x),y)f(y)d\Vol(y)=\int_{\mathbb{H}}k(x,\varphi_{t}^{-1}(y))f(y)d\Vol(y)\\
      &=\int_{\mathbb{H}}k(x,y)f(\varphi_{t}(y))d\Vol(y).\\
      \end{aligned}
    \end{equation}
    Then we deduce that\footnote{Here $y$ is identified with the tangent vector at $T_{y}\mathbb{H}$.}
    \begin{equation}
      \begin{aligned}
      \lim_{t\to 0}\frac{F(\varphi_{t}(x))-F(x)}{t}&=\lim_{t \to 0}\frac{1}{t} \left ( \int_{\mathbb{H}}k(x,y)f(\varphi_{t}(y))d\Vol(y)-\int_{\mathbb{H}}k(x,y)f(y)d\Vol(y) \right )\\
                                                   &=\int_{\mathbb{H}}k(x,y)\lim_{t \to 0}\frac{f(e^{t}y)-f(y)}{t}d\Vol(y)\\
                                                   &=\int_{\mathbb{H}}k(x,y)df_{y}(y)d\Vol(y)=\int_{\mathbb{H}}k(x,y)\omega_{y}(y)d\Vol(y).
      \end{aligned}
    \end{equation}
    Here $$|y|_{y}^{2}=\frac{1}{y^{2}}y^{2}=1.$$
    Then
    \begin{equation}\label{calculate1}
      \begin{aligned}
        dF_{x}(v)=\int_{\mathbb{H}}k(x,y)\omega_{y}(y)d\Vol(y)=\int_{D}\sum_{\gamma \in \Gamma}k(x,\gamma y)\omega_{\gamma y}(\gamma y)d\Vol(y).
      \end{aligned}
    \end{equation}
    Now we estimate $|dF|_{x}$, by Cauchy-Schwarz inequality:
    \begin{equation}\label{diffinteresti}
      \begin{aligned}
        |dF_{x}(v)|&\leq \int_{D} \sum_{\gamma \in \Gamma}k(x,\gamma y) \cdot |\omega|_{\gamma y}\cdot |\gamma y|_{\gamma y} d\Vol(y)\leq \int_{D} \sum_{\gamma \in \Gamma}k(x,\gamma y) \cdot |\omega|_{y}d\Vol(y)\\
                   &\leq \left ( \int_{D} \Big| \sum_{\gamma \in \Gamma}k(x,\gamma y) \Big|^{2}d\Vol(y) \right )^{\frac{1}{2}} \cdot ||\omega||_{L^{2}}.
      \end{aligned}
    \end{equation}
    In Gilmore-Le Masson-Sahlsten-Thomas \cite{GCLS}, they considered the event $\mathcal{A}_{g}^{b,c}$ defined by
    \begin{equation}\label{shortloop}
    \mathcal{A}_{g}^{b,c}=\{X \in \mathcal{M}_{g}: \mathrm{Inj}(X)\geq g^{-b}, N_{c\log g}(X)\leq 1\},
    \end{equation}
    where we denote by $N_{L}(X,x)$ the number of the primitive geodesic loops $\gamma$ of $\ell_{X}(\gamma) \leq L$ based at $x \in X$ then 
    \begin{equation*}
    N_{L}(X)=\sup_{x \in X}N_{L}(X,x).
    \end{equation*}
    The event $\mathcal{A}_{g}^{b,c}$ occurs with high probability when $b,c$ are small enough. Yunhui Wu and Yuhao Xue suggest to us that $\mathcal{L}_{g}$ defined in \eqref{short} can replace \eqref{shortloop} to obtain the explicit value of $c$ in \eqref{shortloop}. We have the following theorem:
    \begin{theorem}\label{count11}
      For any $X \in \mathcal{L}_{g}$ defined in \eqref{short}, for any $\epsilon>0$, then for large genus $g$:
    \begin{equation}\label{count1}
      N_{(\frac{1}{2}-\epsilon)\log g}(X)\leq 1.
    \end{equation}
    \end{theorem}
    \begin{proof}
      The event $\mathcal{L}_{g}$ is closely related to the tangle-free property studied by Monk-Thomas \cite{MT20}:\\
      \indent Let $X\in \mathcal{M}_{g}$ and $L>0$. We say that $X$ is $L$-tangle-free if all embedded pairs of pants and one-holed tori in $X$ have a total boundary length larger than $2 L$. Since the boundaries of embedded pants or one-holed tori form simple closed multi-geodesics separating $X$, then we deduce that for any $\epsilon>0$,
      \begin{equation}\label{tgl1}
        \mathcal{L}_{g}\subset TF_{g}^{\epsilon}:=\{X \in \mathcal{M}_{g}: X \ \text{is} \ (1-\epsilon)\log g \text{-tangle-free} \}
      \end{equation}
      holds for sufficiently large $g$. Consequently, $\lim\limits_{g \to \infty}\mathbb{P}^{\mathrm{WP}}_{g}(TF_{g}^{\epsilon})=1$. More precisely, Monk-Thomas \cite[Theorem 5]{MT20} proved that $1-\mathbb{P}^{\mathrm{WP}}_{g}(TF_{g}^{\epsilon})=\mathcal{O}((\log g)^{2} g^{-\epsilon})$.\\
    On the suggestion of Thomas, we use the following theorem \cite[Theorem 9]{MT20} to prove \eqref{count1}:
    \begin{theorem}[Monk-Thomas, \cite{MT20}]\label{MTthm9}
      If $x$ is a point on a L-tangle-free surface $X$, and $\delta_x$ is the shortest geodesic loop based at $x$, then any other loop $\beta$ based at $x$ such that $\ell\left(\delta_x\right)+\ell(\beta)<L$ is homotopic to a power of $\delta_x$.
    \end{theorem}
    For any $X \in \mathcal{L}_{g}$ defined in \eqref{short}, and any $\epsilon>0$, then for large genus $g$, $X$ is a $(1-\epsilon)\log g$-tangle-free surface by \eqref{tgl1}.
    If $N_{(\frac{1}{2}-\epsilon)\log g}(X)>1$, then there exist a point $x\in X$, and a primitive geodesic loop $\beta$ based at $x$ which is not homotopic to $\delta_x$ and $\delta_{x}^{-1}$ and satisfies $$\ell(\delta_{x})+\ell(\beta)\leq (1-2\epsilon)\log g<(1-\epsilon)\log g.$$
    It is a contradiction with Theorem \ref{MTthm9}, so we prove \eqref{count1}.
  \end{proof}

  \begin{remark}
    We can also apply the constructions in Nie-Wu-Xue \cite[Section 3]{NWX} and \cite[Proposition 59]{NWX} with a slight modification, and use the classical isoperimetric inequality (see \cite{BU} and \cite[Lemma 8, Lemma 9]{WX1}) to prove \eqref{count1}.
  \end{remark}

  \noindent Now we apply the following counting in \cite[Lemma 6.1]{GCLS}:
    \begin{lemma}[Gilmore-Le Masson-Sahlsten-Thomas, \cite{GCLS}]
    Suppose that $X=\Gamma \setminus \mathbb{H}$ is a closed hyperbolic surface for which there exists an $R>0$ such that $N_R(X) \leq n$. Then for each $z, w \in D$,
\begin{equation}\label{count2}
\left|\left\{\gamma \in \Gamma: d(z, \gamma w) \leq \frac{r}{2}\right\}\right| \leq \frac{2 n r}{\operatorname{Inj}(X)}+2, \quad \forall r \leq R.
\end{equation}
\end{lemma}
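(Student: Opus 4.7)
The plan is to transfer the lattice-point count at $z$ in $\mathbb{H}$ into a count of based geodesic loops at $\pi(w)$ on $X$, and then invoke the hypothesis $N_R(X) \leq n$ together with the uniform lower bound $2\,\mathrm{Inj}(X)$ on loop lengths. First I would set $S := \{\gamma \in \Gamma : d(z,\gamma w) \leq r/2\}$, assume $S \neq \emptyset$ (otherwise the bound is trivial), and fix any $\gamma_0 \in S$. By the triangle inequality and the fact that $\Gamma$ acts by isometries, every $\gamma \in S$ satisfies $d(w, \gamma_0^{-1}\gamma w) \leq r$, so each $\gamma \in S \setminus \{\gamma_0\}$ yields a nonidentity element of $\Gamma$ moving $w$ a distance at most $r$.

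Next, for each such $\eta := \gamma_0^{-1}\gamma$, I would observe that the geodesic segment from $w$ to $\eta w$ in $\mathbb{H}$ projects under $\pi$ to a nontrivial geodesic loop at $\pi(w)$ of length $d(w,\eta w) \leq r \leq R$, and that distinct $\gamma$ produce distinct such loops because the $\Gamma$-action is free. Then it suffices to bound the number of nontrivial geodesic loops at $\pi(w)$ of length $\leq r$. Every such loop is a power $\alpha^k$ ($k \geq 1$) of a unique primitive geodesic loop $\alpha$ at $\pi(w)$, of length $k\,\ell(\alpha) \geq 2k\,\mathrm{Inj}_{\pi(w)}(X) \geq 2k\,\mathrm{Inj}(X)$. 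Hence each primitive contributes at most $\lfloor r/(2\,\mathrm{Inj}(X))\rfloor$ powers of length $\leq r$, and the hypothesis $N_R(X,\pi(w)) \leq N_R(X) \leq n$ (valid since $r \leq R$) bounds the number of primitive loops at $\pi(w)$ of length $\leq r$ by $n$. Adding $\gamma_0$ back then yields
\[
|S| \;\leq\; 1 + n \cdot \frac{r}{2\,\mathrm{Inj}(X)} \;\leq\; 2 + \frac{2nr}{\mathrm{Inj}(X)},
\]
which is the stated inequality (with some slack to spare).

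The main point requiring care is verifying that distinct nonidentity elements $\eta \in \Gamma$ produce distinct nontrivial geodesic loops at $\pi(w)$: this will rely on the free action of $\Gamma$ on $\mathbb{H}$ together with uniqueness of the geodesic joining two points of $\mathbb{H}$, so that the parametrized projected loop determines its lift starting from $w$ and hence determines $\eta$. Beyond this verification there is no real obstacle; the remaining steps are elementary hyperbolic geometry, and the resulting bound is in fact slightly stronger than the one claimed.
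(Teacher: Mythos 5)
The paper itself does not prove this lemma --- it is quoted verbatim from \cite{GCLS} --- so your argument has to stand on its own. Its first two steps do: passing from $S=\{\gamma:\ d(z,\gamma w)\le r/2\}$ to nonidentity elements $\eta=\gamma_0^{-1}\gamma$ with $d(w,\eta w)\le r$, and the injective correspondence between such $\eta$ and nontrivial geodesic loops based at $\pi(w)$ (via freeness of the action and unique path lifting), are correct. The genuine gap is the structural claim that every nontrivial geodesic loop at $\pi(w)$ of length $\le r$ \emph{is} a power $\alpha^{k}$ of a primitive geodesic loop $\alpha$ with length exactly $k\,\ell(\alpha)$. That is false: the loop corresponding to $\eta_0^{k}$ is the projection of the geodesic segment from $w$ to $\eta_0^{k}w$, of length $d(w,\eta_0^{k}w)$, which is in general \emph{strictly} smaller than $k\,d(w,\eta_0 w)$; the $k$-fold concatenation of $\alpha$ is not even a geodesic loop (it has a corner each time it returns to the basepoint) unless $w$ lies on the axis of $\eta_0$. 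Your intermediate inequality ``length $\ge 2k\,\mathrm{Inj}_{\pi(w)}(X)$'' can actually fail, e.g.\ when $\eta_0$ has very small translation length and axis far from $w$.

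What the argument needs, and what is true, are two facts about the primitive root $\eta_0$ of $\eta$ in $\Gamma$ (every $\eta\neq\mathrm{Id}$ equals $\eta_0^{k}$, $k\ge 1$, with $\eta_0$ primitive, $\Gamma$ being discrete and torsion free): (i) $d(w,\eta_0^{k}w)\ \ge\ \ell(\eta_0^{k})=k\,\ell(\eta_0)\ \ge\ 2k\,\mathrm{Inj}(X)$, because the displacement of a hyperbolic isometry at any point is at least its translation length and the systole of $X$ equals $2\,\mathrm{Inj}(X)$; this is what bounds $k\le r/(2\,\mathrm{Inj}(X))$; and (ii) $d(w,\eta_0 w)\le d(w,\eta_0^{k}w)\le r\le R$, so that $\eta_0$ really does contribute one of the at most $n$ primitive loops of length $\le R$ based at $\pi(w)$; this monotonicity in $k$ is not automatic and is usually read off from $\sinh\bigl(d(w,\eta_0^{j}w)/2\bigr)=\cosh\bigl(d(w,A_{\eta_0})\bigr)\sinh\bigl(j\,\ell(\eta_0)/2\bigr)$, where $A_{\eta_0}$ is the axis. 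Neither point is addressed in your write-up, precisely because the false ``length $=k\,\ell(\alpha)$'' claim made both look trivial. With (i) and (ii) supplied (and possibly a factor $2$ if primitive loops are counted unoriented, i.e.\ $\eta_0$ and $\eta_0^{-1}$ identified, which the slack in the constant absorbs), your outline closes and is essentially the counting argument of \cite{GCLS}.
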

    We choose $R=(\frac{1}{2}-\epsilon)\log g$ and $t=\frac{R}{2},$ then we apply \eqref{count1} and \eqref{count2} to estimate the integral kernel:
    $$\#\{\gamma \in \Gamma: d(z,\gamma w)\leq t \}\leq \frac{4t}{\mathrm{Inj}(X)}+2.$$
    By the choice of $k_t$ in \eqref{untermpertest}, it implies that 
    \begin{equation}
      \begin{aligned}
        &\int_{D}\Big|\sum_{\gamma \in \Gamma}k(x,\gamma y)\Big|^{2}d\Vol(y)= \frac{1}{\cosh(t)}\int_{D} \sum_{\gamma \in \Gamma}\mathbf{1}_{d(x,\gamma y)\leq t}(y)d\Vol(y) \\
        \leq & \frac{1}{\cosh(t)} \cdot \left ( \frac{4t}{\mathrm{Inj}(X)}+2 \right )\mathrm{Vol}(B(x,t))\leq 2\pi \left ( \frac{4t}{\mathrm{Inj}(X)}+2 \right ).
      \end{aligned}
    \end{equation}
    By \eqref{diffinteresti}, we have: 
    \begin{equation*}
      |dF|_{x}=\sup_{|v|_{x}=1}|dF_{x}(v)|\leq \sqrt{2\pi \left ( \frac{4t}{\mathrm{Inj}(X)}+2 \right )} ||\omega||_{L^{2}}.
    \end{equation*}
    Since $\mathrm{Inj}(X) \leq 2 \log g$, we can remove the term $+2$ to obtain that 
    \begin{equation}\label{4geo}
      |dF|_{x} \leq \sqrt{ \frac{6\pi \log g}{\mathrm{Inj}(X)} } ||\omega||_{L^{2}}.
    \end{equation}
    Another side, we have proved for some $g>g_{\epsilon}^{\prime}$,
    \begin{equation}\label{4spe}
      |dF|_{x}=|A_{0}\omega|_{x}=2\pi\frac{\cosh(t)-1}{\sqrt{\cosh(t)}}\cdot|\omega|_{x}\gtrsim e^{\frac{t}{2}}|\omega|_{x}.
    \end{equation}
    Combinate \eqref{4geo} and \eqref{4spe}, we have
    \begin{equation*}
      ||\omega||_{L^{\infty}}\lesssim \frac{g^{-\frac{1}{8}+\frac{\epsilon}{4}}\sqrt{\log g}}{\sqrt{\mathrm{Inj}(X)}}||\omega||_{L^{2}}.
    \end{equation*}
    For sufficiently large $g>g_{\epsilon}$, we obtain Theorem \ref{deh}:
    \begin{equation}
      ||\omega||_{L^{\infty}}\leq \frac{g^{-\frac{1}{8}+\epsilon}}{\sqrt{\mathrm{Inj}(X)}}||\omega||_{L^{2}}.
    \end{equation}
    \subsection{Generalization to the eigenforms}
    In this subsection, we generalize Theorem \ref{deh} to general eigenforms of the Hodge Laplacian. An eigenform $\omega$ of $\lambda=\frac{1}{4}+r^2$ is defined by $\Delta \omega=\lambda \omega$. We have the following theorem:
    \begin{theorem}\label{delocaleigen}
      There exists $g_{0}>0$ then for any $g>g_{0}$ and $\lambda\geq \frac{1}{4}$, there exists a constant $C_\lambda$ which depends on $\lambda$ continuously such that for any $X \in \mathcal{L}_{g}$ in \eqref{short} and any eigenform $\omega_{\lambda}$ of the Hodge Laplacian associated with eigenvalue $\lambda$, we have:
    \begin{equation*}\label{termperform}
   ||\omega_{\lambda}||_{L^{\infty}} \leq C_{\lambda}\sqrt{\frac{1}{\mathrm{Inj}(X) \log g}} ||\omega_{\lambda}||_{L^2},
    \end{equation*}
    Moreover, for any $\epsilon>0$ and $0<\beta\leq \frac{1}{2}$, there exists $g_{\epsilon,\beta}>0$ depending on $\epsilon,\beta$ and $C_{\beta}$ depending continuously on $\beta$, then for any $g>g_{\epsilon,\beta}$ and $\lambda \in \left [0, \frac{1}{4}-\beta^{2}\right ]$ such that for any $X \in \mathcal{L}_{g}$ and any eigenform $\omega_{\lambda}$ of the Hodge Laplacian associated with eigenvalue $\lambda$, we have:
    \begin{equation*}\label{untermperform}
   ||\omega_{\lambda}||_{L^{\infty}} \leq C_{\beta}\frac{g^{-\frac{\beta}{4}+\epsilon}}{\sqrt{\mathrm{Inj}(X)}} ||\omega_{\lambda}||_{L^2}.
    \end{equation*}
    \end{theorem}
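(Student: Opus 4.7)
The plan is to adapt the strategy of Theorem \ref{deh} by replacing the trivial spherical average (which sufficed for harmonic forms, since constants are the only radial harmonic functions) with the spherical function $\phi_\lambda$ associated to the eigenvalue $\lambda$.

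First, I would reduce to scalar eigenfunctions via Hodge decomposition. For $\lambda>0$, on a closed orientable surface every $1$-form eigenform of the Hodge Laplacian satisfies $\omega_\lambda = df_1 + *df_2$ with $\Delta f_i = \lambda f_i$, and $df_1 \perp *df_2$ in $L^2$ by Stokes' theorem (using $**=-1$ on $1$-forms in dimension $2$), so that $||\omega_\lambda||_{L^2}^2 = \lambda(||f_1||_{L^2}^2 + ||f_2||_{L^2}^2)$. By rotational symmetry of the radial convolution operator introduced in \eqref{ophar}, it suffices to treat each scalar piece $df_i$ and $*df_i$ separately. Next, I would compute the action of the operator $A$ on such eigenforms. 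Lifting $f$ with $\Delta f=\lambda f$ to $\mathbb{H}$, setting $F(x)=\int_{\mathbb{H}} k(x,y)f(y)\,d\mathrm{Vol}(y)$ and repeating the averaging argument of Section \ref{sec4}, the spherical average now becomes $\widetilde{f}_x(y) = \int_{G_x} f(\gamma y)\,d\gamma = \phi_\lambda(d(x,y))\,f(x)$, where $\phi_\lambda$ is the spherical function on $\mathbb{H}$ normalised by $\phi_\lambda(0)=1$. This yields $A(df) = c_\lambda(K)\,df$ and $A(*df) = c_\lambda(K)\,*df$, with
\begin{equation*}
c_\lambda(K) = 2\pi \int_0^\infty K(\rho)\phi_\lambda(\rho)\sinh(\rho)\,d\rho.
\end{equation*}
The counting argument of Section \ref{sec4} is unchanged and still yields $|A\omega_\lambda|_x \lesssim \sqrt{t/\mathrm{Inj}(X)}\cdot||\omega_\lambda||_{L^2}$ whenever the kernel is supported in a ball $B(x,t)$ with $t\le (1/2-\epsilon)\log g$.

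For small eigenvalues $\lambda \in [0,1/4-\beta^{2}]$, I would keep the kernel $K_t = \mathbf{1}_{\rho<t}/\sqrt{\cosh t}$ with $t=(1/4-\epsilon/2)\log g$. Writing $\lambda = 1/4-s^2$ with $s\ge \beta$, the Harish--Chandra asymptotic $\phi_\lambda(\rho)\asymp_\beta e^{-(1/2-s)\rho}$ for $\rho$ large, whose leading constant is continuous in $s$, gives
\begin{equation*}
c_\lambda(K_t) \gtrsim_\beta \frac{1}{\sqrt{\cosh t}}\int_0^t e^{-(1/2-s)\rho}\sinh(\rho)\,d\rho \asymp_\beta e^{st} \ge g^{\beta/4-\beta\epsilon/2}.
\end{equation*}
Combining with the counting bound and absorbing the $\sqrt{\log g}$ factor into $g^{o(1)}$ for $g$ large yields the stated estimate $||\omega_\lambda||_{L^\infty} \le C_\beta g^{-\beta/4+\epsilon}/\sqrt{\mathrm{Inj}(X)}\cdot||\omega_\lambda||_{L^2}$. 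This also recovers Theorem \ref{deh} as the boundary case $s=1/2$.

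For $\lambda = 1/4+r^2\ge 1/4$ the spherical function is oscillatory, $\phi_\lambda(\rho)\sim \sqrt{2/(\pi\sinh\rho)}\cos(r\rho-\theta_r)$ at infinity, so the indicator kernel no longer produces a growing $c_\lambda$. Here I would replace $K_t$ with a radial kernel $K_{t,\lambda}$ tuned to the frequency $r$, modeled on the construction of Gilmore--Le Masson--Sahlsten--Thomas \cite{GCLS}. The two quantities to compute are (i) the spherical transform $c_\lambda(K_{t,\lambda})$, which should be bounded below by $C_\lambda^{-1}$ depending continuously on $\lambda$ on compact subsets of $[1/4,\infty)$, and (ii) the autocorrelation $\int_{\mathbb{H}} k(x,z)k(x,\gamma z)\,d\mathrm{Vol}(z)$, whose oscillatory cancellation yields the additional $1/\sqrt{\log g}$ gain in the counting step. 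With $t\asymp \log g$ this should give $|\omega_\lambda|_x \lesssim_\lambda 1/\sqrt{\mathrm{Inj}(X)\log g}\cdot||\omega_\lambda||_{L^2}$.

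The main obstacle is the large-$\lambda$ case: one needs a careful oscillatory integral analysis to show simultaneously that $c_\lambda(K_{t,\lambda})$ stays bounded away from zero continuously in $\lambda$ and that the lattice autocorrelation inherits GCLS-style cancellation, producing the logarithmic improvement in the counting estimate. This is exactly the difficulty resolved in \cite{GCLS} for scalar eigenfunctions, so the main work lies in transferring their kernel construction to the $1$-form setting via the Hodge decomposition above; the resulting constant $C_\lambda$ is continuous in $\lambda$ on $[1/4,\infty)$ by continuity of the spherical transform.
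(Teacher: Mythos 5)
Your proposal is correct and follows essentially the same route as the paper: reduce to scalar eigenfunctions via the Hodge decomposition $\omega_\lambda=df-\star dg$ with $\Delta f=\lambda f$, $\Delta g=\lambda g$, observe that the radial convolution operator acts on such eigenforms by the Selberg/spherical transform of the kernel, and then run the GCLS counting argument with their test kernels (the indicator kernel and Harish--Chandra asymptotics for $\lambda<\frac14$, the oscillatory GCLS kernel for $\lambda\ge\frac14$). The step you flag as the "main obstacle" (the tempered case) is handled in the paper exactly as you propose, by deferring to the kernel construction and autocorrelation analysis of \cite{GCLS}.
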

    We introduce how to construct the operator generated by the radial kernel. Recall the Hodge decomposition theorem, see \cite{WF}, for any $\omega \in \Omega^{1}(X)$, we have $\omega=\omega_{0}+df+\delta \eta,$ where $\omega \in \mathcal{H}^{1}(X)$, $f\in C^{\infty}(X)$ and $\eta \in \Omega^{2}(X)$. We define $g=\star \eta$, where $\star$ is the Hodge star operator. Since $\delta=-\star d \ \star$, $\delta \eta= -\star d \star \eta=-\star dg$. Let radial function $k:\mathbb{H}\times \mathbb{H} \mapsto \mathbb{R}$ be the same as the previous section, then we define
    \begin{equation}\label{radialfunction}
        F_{1}(x)=\int_{\mathbb{H}}k(x,y)f(y)d\Vol(y), \quad F_{2}(x)=\int_{\mathbb{H}}k(x,y)g(y)d\Vol(y),
    \end{equation}
    and $A_{0}$ is the operator which we defined in \ref{ophar}. Then we define $A:\Omega^{1}(X) \to \Omega^{1}(X)$:
    \begin{equation}\label{operatorform}
      (A\omega)_{x}=(A_{0}\omega_{0})_{x}+(dF_{1})_{x}-\star (dF_{2})_{x}.
    \end{equation}
    In the same way, we find $A\omega$ is independent with the choice of $f$ and $\eta$. Now we consider if $\omega$ is the eigenform of $\lambda=\frac{1}{4}+r^{2}>0,$ i.e. $\Delta \omega=\lambda \omega.$
    It implies $$\Delta \omega=d \Delta f+\delta \Delta \eta=\lambda \omega_{0}+\lambda df+\lambda \delta \eta.$$ By the orthogonality and $\lambda>0$, we obtain that $\omega_{0}=0$, $d \Delta f=\lambda df$, $\delta \Delta \eta=\lambda \delta \eta$.
    Since $\lambda>0$, we can choose $f$ and $\eta$ such that:
    \begin{equation}\label{eigenformtoeigenfunction}
        \Delta f=\lambda f, \quad \Delta g=\lambda g, \ \text{where} \ g=\star \eta.
      \end{equation}
    Then we find that $\omega=df+\delta \eta=df+\delta \star g=df-\star dg.$ By Iwaniec \cite[Theorem 1.14]{IWH21} and Le Masson-Sahlsten \cite[Proposition 2.1]{LS}, $$F_{1}(x)=\mathcal{S}(k)(r)f(x), \quad F_{2}(x)=\mathcal{S}(k)(r)g(x),$$ where $\mathcal{S}(k)$ is the Selberg transform of $k$ in Theorem \ref{Sel}, then we concludes that:
    \begin{equation}\label{radialeigenform}
      (A\omega)_{x}=\mathcal{S}(k)(r)(df_{x}-\star dg_{x})=\mathcal{S}(k)(r)\omega_{x}.
    \end{equation}
    We can prove Theorem \ref{delocaleigen} using the methods in \cite[Theorem 1.1]{GCLS}. The details for its proof will be provided in Appendix \ref{app2}.
    \section{Proof of the main theorems}\label{maintheoremsection}
    \subsection{Proof of Theorem \ref{uniweylfortwist}}
      By Theorem \ref{deh} and \eqref{geoassump}, we take $\epsilon=\frac{1}{50}$, then for sufficiently large $g$ such that for any $X \in \mathcal{A}_{g} \cap \mathcal{L}_{g}$, and $\omega \in \mathcal{H}^{1}(X,\mathbb{R})$, we have:
      $$||\omega||_{L^{\infty}}\leq \frac{g^{-\frac{1}{8}+\frac{1}{50}}}{\sqrt{\mathrm{Inj}(X)}}||\omega||_{L^2}, \quad \mathrm{Inj}(X)\geq g^{-\frac{1}{24}}(\log g)^{\frac{9}{8}}.$$
      It implies there exists a universal constant $g_{0}^{\prime}>0$ such that for any $g>g_{0}^{\prime}$, and $X \in \mathcal{A}_{g} \cap \mathcal{L}_{g}$:
      \begin{equation}\label{eq31}
      ||\omega||_{L^{\infty}}\leq g^{-\frac{1}{48}}||\omega||_{L^2}, \ \text{for any} \ \omega \in \mathcal{H}^{1}(X,\mathbb{R}).
      \end{equation}
      We take $C=10(C_{0}+C_{0}^{\prime})$ where $C_{0}$, $C_{0}^{\prime}$ are universal constant in Theorem \ref{uniweyl} and \ref{uniweylfortwist} respectively. For any $c>0$, then we choose $c(g)=cg^{-\frac{1}{48}}$. We choose a constant $\varepsilon=\min \left\{C_{0}, \ C_{0}^{\prime}, \ 1\right\}$. Then there exists a constant $g_{c}^{\prime}$ only depending on $c$ such that for any $g>g_{c}^{\prime}$, $2\varepsilon \sqrt{\frac{1}{\log g}}\geq 8000c(g)$. Let 
      \begin{equation}\label{eq37}
      g_{c}:=\left[g_{0}+g_{0}^{\prime}+g_{c}^{\prime}+\left((10^{4}c)^{48}\right)\right]+1,
      \end{equation}
      then for any $g>g_{c}$, any interval $I=[a,b]$ with $b\geq 0$, any $X \in \mathcal{A}_{g} \cap \mathcal{L}_{g}$, and $\omega \in \mathcal{H}^{1}(X,\mathbb{R})$ with $||\omega||_{L^2}\leq c$. By \eqref{eq31}, we have: $$||\omega||_{L^{\infty}}\leq cg^{-\frac{1}{48}}=c(g), \quad c(g)<10^{-4}.$$ By \eqref{principalvalue} and \eqref{estiofprin}, 
      \begin{equation}\label{eq38}
      \lambda_{0}^{\omega}(X) \geq -c(g)(1+c(g)) >-\frac{1}{2}.
      \end{equation}
      If $a<-2$, then $N^{-\Delta_{\omega}}_{X}(\Re \lambda \in [a,-2])=0$, so we assume that $a\geq -2$. \\
      \indent Let $h=(b+1)^{-\frac{1}{2}}\in (0,1]$, and define $a^{\prime}:=ah^{2}=\frac{a}{b+1}$, $b^{\prime}:=bh^{2}=\frac{b}{b+1}$, then $-2\leq a^{\prime} \leq b^{\prime} \leq 1$, $b^{\prime} \geq 0$ and $$N^{-\Delta_{\omega}}_{X}(\Re \lambda \in [a,b])=N^{P_{\omega}}_{X}(\Re \lambda \in [a^{\prime},b^{\prime}]), \quad N^{-\Delta}_{X}(\lambda \in [a,b])=N^{P}_{X}(\lambda \in [a^{\prime},b^{\prime}]).$$
      If $b^{\prime}-a^{\prime}\geq 2\varepsilon \sqrt{\frac{b+1}{\log g}}h \geq 8000c(g)h$, by Theorem \ref{semiweyltwi}:
      $$\left| N^{-\Delta_{\omega}}_{X}(\Re \lambda \in [a,b])-N^{-\Delta}_{X}(\lambda \in [a,b]) \right|\leq C_{0}^{\prime}\mathrm{Vol}(X)\left ( \sqrt{\frac{1}{\log g}}+c(g) \right)\sqrt{b+1}.$$
      We note that $c(g)\leq \sqrt{\frac{1}{\log g}}$. Combining with Theorem \ref{uniweyl}, we have:
      \begin{equation}\label{eq36}
        \begin{aligned}
       (-C_{0}-2C_{0}^{\prime})\sqrt{\frac{b+1}{\log g}} 
       &\leq \frac{N^{-\Delta_{\omega}}_{X}(\Re \lambda \in I)}{\mathrm{Vol}(X)}-\frac{1}{4\pi}\int^{\infty}_{\frac{1}{4}} \mathbf{1}_{[a,b]}(\lambda)\tanh \left ( \pi\sqrt{\lambda-\frac{1}{4}} \right ) d\lambda \\
       &\leq C_{0}\sqrt{\frac{b+1}{\log g}} \left[ \log \left(2+(b-a) \sqrt{\frac{\log g}{b+1}}\right) \right]^{\frac{1}{2}} + 2C_{0}^{\prime}\sqrt{\frac{b+1}{\log g}} \\
       &\leq (C_{0}+4C_{0}^{\prime})\sqrt{\frac{b+1}{\log g}} \left[ \log \left(2+(b-a) \sqrt{\frac{\log g}{b+1}}\right) \right]^{\frac{1}{2}}.
        \end{aligned}
      \end{equation}
      It implies \eqref{maincount} and \eqref{maincountfin} in the case $b^{\prime}-a^{\prime}\geq 2\varepsilon\sqrt{\frac{1}{\log g}}h$ by $C=10(C_{0}+C_{0}^{\prime})$. \\
      \indent If $b^{\prime}-a^{\prime}\leq 2\varepsilon \sqrt{\frac{1}{\log g}}h$, let $a_{0}=b-2\varepsilon\sqrt{\frac{b+1}{\log g}} \geq -2$. On one side, 
      \begin{equation*}
        \begin{aligned}
       &\frac{N^{-\Delta_{\omega}}_{X}(\Re \lambda \in I)}{\mathrm{Vol}(X)}-\frac{1}{4\pi}\int^{\infty}_{\frac{1}{4}} \mathbf{1}_{[a,b]}(\lambda)\tanh \left ( \pi\sqrt{\lambda-\frac{1}{4}} \right ) d\lambda \\
       &\geq -\frac{b-a}{4\pi}\geq -\frac{2\varepsilon}{4\pi} \sqrt{\frac{b+1}{\log g}}> -C\sqrt{\frac{b+1}{\log g}}.
        \end{aligned}
      \end{equation*}
      On the other side, by \eqref{eq36} and $\varepsilon=\min \left\{C_{0}, \ C_{0}^{\prime}, \ 1\right\}$, we have:
      \begin{equation*}
        \begin{aligned}
       \frac{N^{-\Delta_{\omega}}_{X}(\Re \lambda \in [a,b])}{\mathrm{Vol}(X)} 
       &\leq  \frac{N^{-\Delta_{\omega}}_{X}(\Re \lambda \in [a_{0},b])}{\mathrm{Vol}(X)} \\
       &\leq \frac{2\varepsilon}{4\pi}\sqrt{\frac{b+1}{\log g}}+ (C_{0}+4C_{0}^{\prime})\sqrt{\frac{b+1}{\log g}} \left( \log \left(2+2\varepsilon\right) \right)^{\frac{1}{2}} \\
       &\leq \left( 2C_{0}\sqrt{\frac{b+1}{\log g}}+ (2C_{0}+8C_{0}^{\prime})\sqrt{\frac{b+1}{\log g}}\right)\left( \log 2 \right)^{\frac{1}{2}} \\
       &\leq  C \sqrt{\frac{b+1}{\log g}} \left[ \log \left(2+(b-a) \sqrt{\frac{\log g}{b+1}}\right) \right]^{\frac{1}{2}}.
        \end{aligned}
      \end{equation*}
      So we complete the proof of our main Theorem \ref{uniweylfortwist}.
    \subsection{Proof of Theorem \ref{uniweylfortwistsmalleigen} and \ref{twigap}}
    First, we review Wu-Xue \cite[Theorem 2]{WX2}.
    \begin{theorem}[Wu-Xue, \cite{WX2}]\label{smalleigen}
For any $\sigma \in\left(\frac{1}{2}, 1\right)$, and for any $\epsilon>0$, we have
$$
\lim _{g \to \infty} \mathbb{P}^{\mathrm{WP}}_{g}\left(X \in \mathcal{M}_g ; N_{X}^{-\Delta}(\lambda\in (0,\sigma(1-\sigma))) \leq g^{3-4 \sigma+\epsilon}\right)=1.
$$
    \end{theorem}
    Now we start the proof. Fix an interval $I=[a,b]$ with $0\leq b<\frac{1}{4}$, for any $0<\varepsilon<\sqrt{\frac{1}{4}-b}$, and $c>0$, let $\sigma=\frac{1}{2}+\sqrt{\frac{1}{4}-b-\varepsilon^{2}} \in (0,1)$, then $\sigma(1-\sigma)=b+\varepsilon^{2}$. We introduce a sequence of events
    \begin{equation}\label{smalleigencount}
      \mathcal{N}^{b,\varepsilon}_{g}:=\left\{X \in \mathcal{M}_{g}: N_{X}^{-\Delta}(\lambda\in (0,\sigma(1-\sigma))) \leq g^{1-4\sqrt{\frac{1}{4}-b-\varepsilon^{2}}+\frac{\varepsilon}{2}} \right\},
    \end{equation}
    which occurs with high probability by Theorem \ref{smalleigen}. Let $$g_{c,\varepsilon}:=\left[g_{c}+(4000c \varepsilon^{-2})^{48}+(1+C_{0}^{\prime\prime})^{\frac{2}{\varepsilon}}\right]+1,$$ 
    where $g_{c}$ is defined in \eqref{eq37}. By \eqref{eq31}, for any $g>g_{c,\varepsilon}$, for any $X \in \mathcal{A}_{g} \cap \mathcal{L}_{g}\cap \mathcal{N}^{b,\varepsilon}_{g}$ and $\omega \in \mathcal{H}^{1}(X,\mathbb{R})$ with $||\omega||_{L^2}\leq c$, we have $||\omega||_{L^\infty}\leq c(g)=cg^{-\frac{1}{48}}$ and $c(g)<10^{-4}$. By \eqref{eq38}, $\lambda_{0}^{\omega}(X)>-\frac{1}{2}$, we have: $$N^{-\Delta_{\omega}}_{X}(\Re \lambda \in [a,b]) \leq N^{-\Delta_{\omega}}_{X}(\Re \lambda \in [-1,b]),$$ since $c(g)<10^{-4}$ and $g>(4000c \varepsilon^{-2})^{48}$, then $$b-(-1)\geq 1> 2\delta=8000c(g), \quad -1+\delta<-1/2, \quad b+\delta<b+\varepsilon^2.$$
      Since $X \in \mathcal{N}^{b,\varepsilon}_{g} $, by \eqref{smalleigencount}, we have:
      $$N^{-\Delta}_{X}(-1+(-\delta,\delta))=0, \ N^{-\Delta}_{X}(b+(-\delta,\delta))\leq  N_{X}^{-\Delta}(\lambda\in (0,\sigma(1-\sigma))) \leq g^{1-4\sqrt{\frac{1}{4}-b-\varepsilon^{2}}+\frac{\varepsilon}{2}},$$
      it implies that $$N^{P}_{X}(\lambda \in I_{\delta}) \leq g^{1-4\sqrt{\frac{1}{4}-b-\varepsilon^{2}}+\frac{\varepsilon}{2}}.$$
      When $h=1$, $g$ and the interval $[-1,b]$ satisfy \eqref{eq46} and \eqref{eq33} in Theorem \ref{perturoflaplacian} respectively, thus by Theorem \ref{perturoflaplacian}, we obtain that:
      \begin{equation}
        \left | N^{-\Delta_{\omega}}_{X}(\Re \lambda \in [-1,b])-N^{-\Delta}_{X}(\lambda \in [-1,b]) \right |\leq C_{0}^{\prime\prime} g^{1-4\sqrt{\frac{1}{4}-b-\varepsilon^{2}}+\frac{\varepsilon}{2}}.
      \end{equation}
      By Theorem \ref{smalleigen}, we have $$N^{-\Delta}_{X}(\lambda \in [-1,b])\leq 1+ g^{1-4\sqrt{\frac{1}{4}-b-\varepsilon^{2}}+\frac{\varepsilon}{2}}.$$
      Since $g>(1+C_{0}^{\prime\prime})^{\frac{2}{\varepsilon}}$, we have:
      $$N^{-\Delta_{\omega}}_{X}(\Re \lambda \in [a,b]) \leq N^{-\Delta_{\omega}}_{X}(\Re \lambda \in [-1,b])\leq 1+ g^{1-4\sqrt{\frac{1}{4}-b}+\varepsilon}.$$
      We complete the proof of Theorem \ref{uniweylfortwistsmalleigen}. \\
      \indent The proof of Theorem \ref{twigap} is very similar to Theorem \ref{uniweylfortwistsmalleigen}. We select a typical spectral gap $\alpha \in \left(0,\frac{1}{4}\right]$, any $0<\varepsilon<\alpha$, and any $c>0$, let $$g_{\alpha,c,\varepsilon}=\left[g_{c}+(16000c \varepsilon^{-1})^{48}+(4000c(\alpha-\varepsilon)^{-1})^{48}\right]+1,$$ 
        where $g_{c}$ is defined in \eqref{eq37}. By \eqref{eq31}, for any $g>g_{\alpha,c,\varepsilon}$, for any $X \in \mathcal{A}_{g} \cap \mathcal{L}_{g} \cap \mathcal{Q}^{\alpha,\varepsilon/2}_{g}$, where $\mathcal{Q}^{\alpha,\varepsilon}_{g}$ is defined in \eqref{gapevent}, and $\omega \in \mathcal{H}^{1}(X,\mathbb{R})$ with $||\omega||_{L^2}\leq c$, we have $$||\omega||_{L^\infty}\leq c(g)=cg^{-\frac{1}{48}}, \quad c(g)<10^{-4}.$$ By \eqref{eq38}, $\lambda_{0}^{\omega}(X)>-\frac{1}{2}$, we choose $a=-1$ and $b=\alpha-\varepsilon$ and $\delta=4000c(g)$, then by $c(g)<10^{-4}$, $g>(16000c \varepsilon^{-1})^{48}$ and $g>(4000c(\alpha-\varepsilon)^{-1})^{48}$, we have:
        $$a+\delta<0<b-\delta<b+\delta<b+\frac{\varepsilon}{4}<\alpha-\frac{\varepsilon}{2}.$$ 
        By \eqref{gapevent}, $\lambda_{1}(X)>\alpha -\frac{\varepsilon}{2}$, thus we have $N^{-\Delta}_{X}(\lambda \in I_{\delta})=0$ and $N^{-\Delta}_{X}(\lambda \in [a,b])=1$. 
        When $h=1$, $g$ and the interval $[a,b]$ satisfy \eqref{eq46} and \eqref{eq33} in Theorem \ref{perturoflaplacian} respectively, thus by Theorem \ref{perturoflaplacian}, we obtain that:
         \begin{equation}
           \left | N^{-\Delta_{\omega}}_{X}(\Re \lambda \in [a,b])-N^{-\Delta}_{X}(\lambda \in [a,b]) \right |=0,
         \end{equation}
         so we obtain $ N^{-\Delta_{\omega}}_{X}(\Re \lambda \in [a,b])=1$. By $a<\lambda_{0}^{\omega}(X)\leq 0<b=\alpha-\varepsilon$, then $\Re \lambda^{\omega}_{1}>\alpha-\varepsilon.$ We complete the proof of Theorem \ref{twigap}.

\section{Appendix: $L^{\infty}$ estimate of eigenforms}\label{app2}

\subsection{Untempered Eigenforms}
First, we discuss the untempered eigenforms $\omega$ which satisfies:
\begin{equation*}
  \Delta \omega=\left ( \frac{1}{4}+r^{2} \right ) \omega, \ \text{where} \ r=is, s\in \left[ \beta, \frac{1}{2} \right).
\end{equation*}
We use the test function $k_{t}(x,y)$ in \eqref{untermpertest}. By Le Masson-Sahlsten \cite[Lemma 8.1]{LS}, we have:
\begin{equation*}
  \begin{aligned}
  h_{t}(r)& =\mathcal{S}(k)(r)=2\sqrt{2}\int_{0}^{t}\cos(ru)\sqrt{1-\frac{\cosh(u)}{\cosh(t)}}du \\
          & =2\sqrt{2}\int_{0}^{t}\cosh(su)\sqrt{1-\frac{\cosh(u)}{\cosh(t)}}du.
  \end{aligned}
\end{equation*}
Now we estimate the $h_{t}(is)=h_{t}(r)$,
\begin{equation}
  \begin{aligned}
    h_{t}(is) & \gtrsim \int_{0}^{t}\cosh(su)\sqrt{1-\frac{\cosh(u)}{\cosh(t)}}du \geq \int_{0}^{t}\cosh(su)\left ( 1-\frac{\cosh(u)}{\cosh(t)} \right )du\\
              & =\frac{\sinh(st)}{s}-\frac{\sinh((s+1)t)}{2\cosh(t)(s+1)}-\frac{\sinh((1-s)t)}{2\cosh(t)(1-s)}\gtrsim_{\beta} \sinh(\beta t).
  \end{aligned}
\end{equation}
It implies:
\begin{equation}\label{untermperlinf}
  |A\omega|_{x}=|h_{t}(is)||\omega|_{x}\gtrsim_{\beta} \sinh(\beta t) |\omega|_{x}.
\end{equation}
Another side, for the untempered eigenform $\omega=df-\star dg$, we have:
\begin{equation*}
    (A\omega)_{x}=(dF_{1})_{x}-\star (dF_{2})_{x}.
\end{equation*}
where
\begin{equation*}
    F_{1}(x)=\int_{\mathbb{H}}k_{t}(x,y)f(y)d\mu(y), \quad F_{2}(x)=\int_{\mathbb{H}}k_{t}(x,y)g(y)d\mu(y).
\end{equation*}
By \eqref{4geo}, we take $R=(\frac{1}{2}-\epsilon)\log g$ and $t=\frac{R}{2}$ in \eqref{count2} then we obtain that:
\begin{equation*}
  \begin{gathered}
    |dF_{1}|_{x}\leq \sqrt{2\pi \left ( \frac{4t}{\mathrm{Inj}(X)}+2 \right )} ||df||_{L^{2}}, \quad |dF_{2}|_{x}\leq \sqrt{2\pi \left ( \frac{4t}{\mathrm{Inj}(X)}+2 \right )} ||dg||_{L^{2}}. 
  \end{gathered}
\end{equation*}
It implies that for sufficiently large $g$\footnote{Hodge star operator $\star$ is an isomrphism between $T^*_{x}X$ for all $x\in X$}:
\begin{equation}\label{untermperl2}
  \begin{aligned}
  |A\omega|_{x} & \leq |dF_{1}|_x+|\star (dF_{2})|_x \leq |dF_{1}|_x+|dF_{2}|_x \\
                & \leq  \sqrt{2\pi \left ( \frac{4t}{\mathrm{Inj}(X)}+2 \right )}(||df||_{L^{2}}+||dg||_{L^{2}}) \leq 2\sqrt{\frac{6\pi \log g}{\mathrm{Inj}(X)}}||\omega||_{L^{2}}. \\
  \end{aligned}
\end{equation}
We obtain the following consequence by \eqref{untermperlinf} and \eqref{untermperl2}:
\begin{equation*}
  \sinh(\beta t)|\omega|_{x}\lesssim \sqrt{\frac{\log g}{\mathrm{Inj}(X)}}||\omega||_{L^{2}}.
\end{equation*}
It implies that for sufficient large $g>g_{\epsilon,\beta}$:
\begin{equation*}
  ||\omega||_{L^{\infty}}\lesssim \frac{g^{-(1/2-\epsilon)\beta/2}\log g}{\sqrt{\mathrm{Inj}(X)}}||\omega||_{L^{2}}\leq \frac{g^{-\frac{\beta}{4}+\epsilon}}{\sqrt{\mathrm{Inj}(X)}}||\omega||_{L^{2}}.
\end{equation*}
We notice that it coincides with Theorem \ref{deh}, but the construction of the operator on the harmonic forms and untempered eigenforms are different so we need to prove two theorems respectively. Now we proved the tempered cases.
\subsection{Tempered Eigenforms}
First, we mention that the operator $A$ which is generated by radial kernel $k$ on the function is denoted by the same notation as the operator generated by the same kernel on the 1-form. In this subsection, we denote $||\cdot||$ as the $L^2$ norm on $\Omega^{1}(X)$, and $||\cdot||_{L^2(X)}$ as the $L^2$ norm on $L^2(X)$.\\
\indent The tempered eigenform $\omega$ satisfies:
\begin{equation*}
  \Delta \omega=\left ( \frac{1}{4}+r^{2} \right )\omega, \ r \in \mathbb{R}. 
\end{equation*}
We choose the auxiliary functions in Gilmore-Le Masson-Sahlsten-Thomas \cite[Section 3.2]{GCLS}:
\begin{equation}\label{auxiliary}
  \begin{aligned}
   j_{t}(r):=\frac{\cos(rt)}{\sqrt{\cosh(\frac{\pi r}{2})}}, \quad l_{t}(\rho):=\mathcal{S}^{-1}(j_{t}).
  \end{aligned}
\end{equation}
We define $P_{t}$ as the operator on 1-forms generated by $l_{t}(\rho)$. When $P_t$ acts on the functions on $X$, for conjugate indices $p,q$ with $p>2$, Gilmore-Le Masson-Sahlsten-Thomas proved the following lemma.
\begin{lemma}[Gilmore-Le Masson-Sahlsten-Thomas, \cite{GCLS}]\label{decomposelemma}
The integral kernel of the composition operator $P_t P_s^*: L^q(X) \rightarrow L^p(X)$ for $t, s \geq 0$ is given by
$$
\frac{1}{2}\left(k_{t+s}+k_{|t-s|}\right)
$$
where $k_{t}(x,y)=K_{t}(d(x,y))$ is the associated radial kernel through the inverse Selberg transform with the function
\begin{equation}\label{eq51}
h_t(r)=\frac{\cos (r t)}{\cosh \left(\frac{\pi r}{2}\right)}, \quad K_t=\mathcal{S}^{-1}(h_t).
\end{equation}
In particular, if $Q_t: L^q(X) \rightarrow L^p(X)$ is the associated integral operator for the kernel $k_t$, then
$$
P_t P_s^* =\frac{1}{2}\left(Q_{t+s} +Q_{|t-s|} \right) .
$$
\end{lemma}
The function $h_{t}(r)$ is considered by Brooks and Lindenstrauss in \cite{BSLE}, and introduced by Iwaniec and Sarnak in \cite{IWH21}, which is used to estimate the $L^{\infty}$ norm of eigenfunctions on the Arithmetic surfaces. Here we need the following Lemma \cite[Lemma 5.2]{BSLE}, also see \cite[Lemma 3.2]{GCLS}:
\begin{lemma}[Brooks-Lindenstrauss, \cite{BSLE}]\label{lemBL}
  We have the following estimate for $K_{t}$ in \eqref{eq51}, a sup norm bound of 
  \begin{equation*}
    \sup_{\rho\geq 0}|K_{t}(\rho)|\lesssim e^{-\frac{t}{2}},
  \end{equation*}
  and rapid decay outside a ball of radius $4t$ of the type
  \begin{equation*}
    \int_{4t}^{\infty}|K_{t}(\rho)|\sinh(\rho)d\rho\lesssim e^{-t}.
  \end{equation*}
\end{lemma}
Now we use Lemma \ref{lemBL} to conclude that for any $X\in \mathcal{L}_{g}$, we take $R=\frac{1}{4}\log g$ and $t \leq \frac{R}{8}$, the arguement in \cite[Page 13]{GCLS} and Lemma \ref{lemBL} show that:
\begin{equation*}
  \begin{aligned}
    \sum_{\gamma \in \Gamma}|k_{t}(x,\gamma y)| 
    & \lesssim \sum_{\gamma \in \Gamma}|k_{t}(x,\gamma y)|\mathbf{1}_{[0,4t]}(d(x,\gamma y))+\int_{4t}^{\infty}K_{t}(\rho)e^{\rho}d\rho \\
    &\lesssim \# \{ \gamma \in \Gamma: d(x,\gamma y)\leq 4t\}e^{-\frac{t}{2}}+e^{-t} \lesssim \frac{t}{\mathrm{Inj}(X)}e^{-\frac{t}{2}}.
  \end{aligned}
\end{equation*}
It implies $Q_{t}: L^{1}(X) \to L^{\infty}(X)$ satisfies:
\begin{equation}\label{eq39}
  ||Q_{t}||_{L^{1}(X)\to L^{\infty}(X)}\lesssim \frac{t}{\mathrm{Inj}(X)}e^{-\frac{t}{2}}.
\end{equation}  
Now we introduce the following operator:
for any $X \in \mathcal{L}_{g}$, take $R=\frac{1}{4}\log g$ and fixed $T\leq \frac{R}{8}$. We define $W_{T,\lambda}: \Omega^{1}(X) \to \Omega^{1}(X)$ as:
\begin{equation*}
  (W_{T,\lambda}\omega)_{x}=\int_{0}^{T} \cos(r_{\lambda}t)(P_{t}\omega)_{x}dt, \ r_{\lambda} \in \mathbb{R}.
\end{equation*}
Here we need the following lemma:
\begin{lemma}\label{ineqonform}
  For any operator $T$ which is generated by radial function $a\geq 0$, for any $\varphi \in C^{\infty}(X)$ thus we have:
  \begin{equation*}\label{eq34}
    |T(d\varphi)|_{x} \leq T(|d\varphi|)(x), \quad \text{ for all } \ x\in X.
  \end{equation*}
\end{lemma}
\begin{proof}
  We consider $\varphi$ as $\Gamma$-invariant smooth function on $\mathbb{H}$, for any $v \in T_{x}\mathbb{H}$ with $|v|_{x}=1$, by \eqref{operatorform} and \eqref{radialfunction}, we have:
  $$\langle T(d\varphi),v\rangle_{x}=d\Phi_{x}(v), \quad \Phi(x)=\int_{\mathbb{H}}a(x,y)\varphi(y)d\Vol(y).$$ 
  By the same argument of \eqref{calculate1}, we assume that $x=i, v=(0,1)$, then 
  $$d\Phi_{x}(v)=\int_{\mathbb{H}}a(x,y)(d\varphi)_{y}(y)d\Vol(y)\leq \int_{\mathbb{H}}a(x,y)|d\varphi|_{y}d\Vol(y)=T(|d\varphi|)(x).$$ 
  It implies that $$|T(d\varphi)|_{x}=\sup_{v \in T_{x}\mathbb{H}, |v|_{x}=1}\langle T(d\varphi),v\rangle_{x}\leq T(|d\varphi|)(x).$$
\end{proof}
Then by \eqref{diffinteresti} and Lemma \ref{ineqonform}, we have:
\begin{equation}\label{eq41}
  \begin{aligned}
    |W_{T,\lambda}(df)|_x=\left | \int_{0}^{T} \cos(r_{\lambda}t)P_{t}(df) dt \right |_x \leq \int_{0}^{T} |P_{t}(df)|_{x}dt \leq \int_{0}^{T} P_{t}(|df|)(x)dt.
  \end{aligned}
\end{equation}
By \eqref{eq39}, for any $\delta>0$, and for sufficient large $g$, we have:
\begin{equation}\label{eq40}
  ||Q_{t}||_{L^{1}(X)\to L^{\infty}(X)}\lesssim \frac{e^{-\left( \frac{1}{2}-\delta \right)t}}{\mathrm{Inj}(X)}.
\end{equation}
Then we use the $AA^{*}$ argument,
\begin{equation*}
  \begin{aligned}
   \left \Vert \int_{0}^{T} P_{t}(|df|)dt \right \Vert_{L^{\infty}}&\leq \left  \Vert \int_{0}^{T} P_{t} dt \right \Vert_{L^{2}(X)\to L^{\infty}(X)} ||df||_{L^{2}} \\
                                              &\leq \left \Vert \left ( \int_{0}^{T} P_{t} dt \right ) \left (\int_{0}^{T} P_{t} dt \right )^{*} \right \Vert_{L^{1}(X)\to L^{\infty}(X)}^{1/2} ||df||_{L^{2}} \\
                                              &\leq \left \Vert \int_{0}^{T}\int_{0}^{T} P_{t}P_{s}^{*}dtds \right \Vert_{L^{1}(X)\to L^{\infty}(X)}^{1/2} ||df||_{L^{2}}.
  \end{aligned}
\end{equation*}
By Lemma \ref{decomposelemma} and \eqref{eq40},
\begin{equation}\label{eq42}
  \begin{aligned}
    &\left \Vert \int_{0}^{T}\int_{0}^{T} P_{t}P_{s}^{*}dtds \right \Vert_{L^{1}(X)\to L^{\infty}(X)} \\
    &\lesssim  \left \Vert \int_{0}^{T}\int_{0}^{T} Q_{t+s} dtds \right \Vert_{L^{1}(X)\to L^{\infty}(X)}+\left \Vert \int_{0}^{T}\int_{0}^{T} Q_{|t-s|}dtds \right \Vert_{L^{1}(X)\to L^{\infty}(X)}\\
    &\lesssim  \frac{1}{\mathrm{Inj}(X)}\int_{0}^{T}\int_{0}^{T}e^{-\left( \frac{1}{2}-\delta \right)(t+s)}+e^{-\left( \frac{1}{2}-\delta \right)|t-s|}dtds \lesssim \frac{T}{\mathrm{Inj}(X)}.
  \end{aligned}
\end{equation}
By \eqref{eq41} and \eqref{eq42}, we prove that:
\begin{equation*}
  |W_{T,\lambda}(df)|_{x} \lesssim \sqrt{\frac{T}{\mathrm{Inj}(X)}} ||df||_{L^{2}}.
\end{equation*}
It also holds for $\star dg$, so we have:
\begin{equation}\label{eq45}
  |W_{T,\lambda}\omega|_{x} \lesssim \sqrt{\frac{T}{\mathrm{Inj}(X)}} ||\omega||_{L^{2}}.
\end{equation}
Now we consider the eigenform $\omega=df-\star dg$ associated with the eigenvalue $\lambda$ where $\lambda=\frac{1}{4}+r^{2}, r\in \mathbb{R}.$
Then 
\begin{equation}\label{eq43}
  |W_{T,\lambda}\omega|_{x}=\left |\frac{1}{\sqrt{\cosh(\frac{\pi r}{2})}}\int_{0}^{T}\cos^{2}(rt)dt \cdot \omega \right |_{x}\gtrsim_{\lambda} T|\omega|_{x}. 
\end{equation}
By \eqref{eq45} and\eqref{eq43}, it implies that:
\begin{equation}
T|\omega|_{x} \lesssim_{\lambda} |W_{T,\lambda}\omega|_{x} \lesssim_{\lambda} \sqrt{\frac{T}{\mathrm{Inj}(X)}} \cdot (||df||_{L^{2}}+||dg||_{L^{2}}  ) \lesssim_{\lambda} \sqrt{\frac{T}{\mathrm{Inj}(X)}} \cdot ||\omega||_{L^{2}}.
\end{equation}
Since $T =\frac{\log g}{32}$, we prove \eqref{termperform}.

\bmhead{Acknowledgments}

The author would like to thank Long Jin for his guidance and encouragement, Yunhui Wu, Xiaolong Han, Yuhao Xue for their valuable suggestions on hyperbolic geometry, including the use of the event $\mathcal{L}_{g}$ to obtain a more explicit upper bound in Theorem \ref{deh}, Joe Thomas for his suggestions on the proof of Theorem \ref{count11}, Zuoqin Wang and Qiaochu Ma for numerous useful discussions. I am also grateful to an anonymous referee for many useful suggestions to improve this article. Yulin Gong is supported by National Key R \& D Program of China 2022YFA100740.  

\section*{Declarations}
\begin{itemize}
\item Conflict of interest. The author declares that there is no conflict of interest. \\
\item Data avaliability. Data sharing is not applicable to this article as no datasets were generated or analyzed during the current study.
\end{itemize}


\bibliography{sn-ref}

\end{document}